\documentclass[a4paper,11pt,twoside,reqno]{article}

\usepackage{amsmath, amsfonts, amssymb, amsthm, mathtools}
\usepackage{stmaryrd}
\usepackage{enumerate}
\usepackage{bbm}
\usepackage{cases}
\usepackage{hyperref}
\usepackage[T1]{fontenc}
\usepackage[latin1]{inputenc}
\usepackage{upgreek}
\usepackage[english]{babel}
\usepackage[cyr]{aeguill}
\usepackage{xspace}
\usepackage{graphicx}
\usepackage{caption}
\usepackage{subcaption}
\usepackage{color}
\usepackage{epigraph}
\usepackage{etoolbox}
\usepackage{enumitem}
\usepackage{stmaryrd}
\usepackage[margin=1in]{geometry}
\usepackage{cite}
\usepackage{dsfont}
\usepackage{ushort}
\usepackage{xcolor}
\usepackage{tikz}
\usetikzlibrary{arrows,backgrounds}
\usepgflibrary{shapes.multipart}
\usetikzlibrary{arrows.meta}
\usepackage[normalem]{ulem}

\numberwithin{equation}{section}
\newtheorem{thm}{Theorem}[section]
\newtheorem{lem}[thm]{Lemma}
\newtheorem{cor}[thm]{Corollary}
\newtheorem{prop}[thm]{Proposition}
\newtheorem{rem}[thm]{Remark}
\newtheorem{eg}[thm]{Example}
\newcommand{\x}{\times}
\newcommand{\pa}{\partial}

\newcommand{\E}{\mathbb{E}}

\newcommand{\N}{\mathbb{N}}
\newcommand{\R}{\mathbb{R}}

\newcommand{\g}{\gamma}
\newcommand{\si}{\sigma}

\newcommand{\Fc}{\mathcal{F}}
\newcommand{\Lc}{\mathcal{L}}
\newcommand{\Pc}{\mathcal{P}}
\newtheorem{assum}[thm]{Assumption}



\theoremstyle{definition}



\newcommand{\dbE}{\mathbb{E}}

\newcommand{\dbN}{\mathbb{N}}
\newcommand{\dbP}{\mathbb{P}}
\newcommand{\dbQ}{\mathbb{Q}}
\newcommand{\dbR}{\mathbb{R}}

\newcommand{\cC}{\mathcal{C}}

\newcommand{\cL}{\mathcal{L}}

\newcommand{\cN}{\mathcal{N}}

\newcommand{\cV}{\mathcal{V}}
\newcommand{\cW}{\mathcal{W}}

%
%

\def\a{\alpha}
\def\b{\beta}
\def\g{\gamma}
\def\d{\delta}
\def\e{\varepsilon}

\def\l{\lambda}

\def\si{\sigma}

\def\f{\varphi}
\def\th{\theta}
\def\o{\omega}

\def\D{\Delta}
\def\Si{\Sigma}

\newcommand{\fF}{\mathfrak{F}}

\def\eps{\varepsilon}

\def\cd{\cdot}
\def\cds{\cdots}

\def\rc{{\rm rc}}
\def\syc{{\rm sc}}

\def\ms{\vspace{3mm}}

\newcommand{\omb}{\bar \omega}
\newcommand{\Vt}{\widetilde V}
\newcommand{\Wb}{\overline W}
\newcommand{\Pb}{\overline \dbP}
\newcommand{\Qb}{\overline \dbQ}
\newcommand{\Pt}{\widetilde \dbP}
\newcommand{\Qt}{\widetilde \dbQ}
\newcommand{\Wt}{\widetilde W}
\newcommand{\bt}{\tilde b}
\newcommand{\Fb}{\overline {\mathbb{F}}}
\newcommand{\Fcb}{\overline {\mathcal{F}}}
\newcommand{\Om}{\Omega} 
\newcommand{\Omb}{\overline \Omega} 
\newcommand{\Xb}{\overline X}
 
\newcommand{\Vb}{\overline V}
\newcommand{\Xt}{\widetilde X}

\DeclareMathOperator*{\argmin}{\arg\!\min}
\DeclareMathOperator*{\argmax}{\arg\!\max}

\begin{document}

\title{Ergodicity of the underdamped mean-field Langevin dynamics}

\author{Anna Kazeykina
	   \footnote{D\'epartement de Math\'ematiques, Facult\'e des Sciences d'Orsay, Universit\'e Paris-Saclay, Orsay, France. anna.kazeykina@math.u-psud.fr.}
	   \and 
	   \footnote{CEREMADE, Universit\'e Paris-Dauphine, PSL, Paris, France. ren@ceremade.dauphine.fr.}
	   Zhenjie Ren
	   \and 
	   Xiaolu Tan
	   \footnote{Department of Mathematics, The Chinese University of Hong Kong, China. xiaolu.tan@cuhk.edu.hk.}
	   \and
	   Junjian YANG
	   \footnote{FAM, Fakult\"at f\"ur Mathematik und Geoinformation, Vienna University of Technology, A-1040 Vienna, Austria. junjian.yang@tuwien.ac.at}
       }

\date{\today}

\maketitle

\begin{abstract} 
 We study the long time behavior of an underdamped mean-field Langevin (MFL) equation,
 and provide a general convergence as well as an exponential convergence rate result under different conditions.
 The results on the MFL equation can be applied to study the convergence of the Hamiltonian gradient descent algorithm for the overparametrized optimization.
 We then provide some numerical examples of the algorithm to train a generative adversarial network (GAN).
\end{abstract}

\noindent
\textbf{MSC 2020 Subject Classification:} 37M25, 60H10, 60H30, 60J60. \newline
\vspace{-0.2cm}\newline
\noindent
\noindent
\textbf{Key words:} Underdamped mean-field Langevin dynamics, ergodicity, coupling, GAN.  


\section{Introduction}
	
	Let $\Pc_2(\R^n)$ denote the space of all probability measures on $\R^n$ with finite second order moment,
	and let $F:\Pc_2(\dbR^n)\longrightarrow \dbR_+$ be a potential function with intrinsic derivative denoted by $D_m F: \Pc_2(\R^n) \x \dbR^n \longrightarrow \dbR^n$ (see Section \ref{sec:pre} below for its definition).
We study in this paper the ergodicity of the following underdamped mean-field Langevin (MFL) equation:
	\begin{equation} \label{eq:MFL_intro}
		d X_t = V_t dt, \quad 
		d V_t = -\big(D_m F(\Lc(X_t), X_t )+  \g V_t \big) dt + \si dW_t,
	\end{equation}
	where $\gamma>0$, $\sigma\neq 0$ are two real constants, $\Lc(X_t)$ denotes the law of $X_t$, and  $W$ is an $n$-dimensional standard Brownian motion.
	Let us denote the marginal distribution by $m_t := \Lc(X_t, V_t)$,
	then it is easy to verify by the It\^o's formula that $m = (m_t)_{t \ge 0}$ is a solution (in the sense of distribution) to the nonlinear kinetic Vlasov-Fokker-Planck equation:
	\begin{equation} \label{eq:FP}
		\pa_t m = - v \cd \nabla_x m  + \nabla_v\cd \big((D_m F(m^X, x) +\g v) m\big)  + \frac{1}{2}  \sigma^2 \Delta_v m,
	\end{equation}
	where $m^X_t \in \Pc_2(\R^n)$ denotes the pushforward measure of $m_t \in \Pc_2(\R^n \x \R^n)$ under the map $(x,v) \longmapsto x$.

	\vspace{0.5em}

	This consists in an extension of the classical underdamped Langevin equation (without the mean-field term $\Lc(X_t)$ in \eqref{eq:MFL_intro}).
	More precisely, let $\phi: \R^n \longrightarrow \R_+$ be a differentiable potential function, and
	\begin{equation} \label{eq:F_phi_intro}
		F(m) = \int_{\R^n} \phi(x) m(dx),
		~\mbox{so that}~
		D_m F(m, x) = \nabla \phi(x),~\mbox{for all}~(m,x) \in \Pc_2(\R^n) \x \R^n.
	\end{equation}
	Then \eqref{eq:MFL_intro} turns to be the classical underdamped Langevin equation,
	which has been introduced in statistical physics to describe the motion of a particle with position $X$ and velocity $V$ in a potential field $\nabla_x \phi$ subject to damping and random collisions, see e.g.~\cite{Eins05, Lang08, Nel67}, and has been largely investigated in computational statistical physics, see e.g.~\cite{SS78, BBK1984}.
	It is well known that (see e.g.~\cite[Proposition 6.1]{Pav14}), under mild conditions, the classical Langevin dynamics (i.e.~Equation \eqref{eq:MFL_intro} with $F$ being given by \eqref{eq:F_phi_intro}) 
	has a (unique) invariant measure on $\dbR^n\times\dbR^n$ with the density:
	\begin{equation} \label{eq:inv_classic}
		m_\infty(x,v) = C e^{-\frac{2\gamma}{\si^2}\big( \phi(x) + \frac12 |v|^2\big)},
	\end{equation}
	for some normalization constant $C > 0$. 
	Moreover, in this setting, an important subject is to study the ergodicity of Langevin dynamic, i.e. the convergence as well as the convergence rate of the marginal distributions $m_t$ to the invariant measure $m_{\infty}$.

	\vspace{0.5em}

	To prove the convergence result, a classical approach is the free energy method, see e.g. Bonilla, Carrillo and Soler \cite{BCS97}.
	Let us define the free energy function along the marginals $(m_t)_{t \ge 0}$ of the Langevin dynamic by
	$$
		\Phi(t) := \int_{\R^n\times\R^n} \Big( \phi(x) + \frac12 |v|^2 \Big) m_t(dx, dv) +  \frac{\si^2}{2\gamma} H(m_t), ~t \ge 0,
	$$
	where $H: \Pc_2(\R^n \x \R^n) \longrightarrow \R \cup \{\infty\}$ is the entropy function (see Section \ref{sec:pre} below for a precise definition).
	By computing the derivative $\Phi'(t)$, one then proves the decay of the free energy, i.e. that $\Phi$ is a decreasing function.
	As the function $\Phi$ is bounded from below, it follows formally that $\Phi'(t) \longrightarrow 0$.
	Under further technical conditions, this can then be used to deduce a general convergence result $m_t \longrightarrow m_{\infty}$.

	\vspace{0.5em}

	As for the convergence rate, many works have been devoted to the subject and different approaches have been introduced.
	In \cite{Vil07, Vil09}, Villani popularized the concept ``hypocoercivity'' and proved the exponential convergence of $m_t$ in $H^1_{m_\infty}$. He generalized the idea already presented in the computations performed in Talay \cite[Section 3]{Talay02}.
	 A more direct approach was later developed in H\'erau \cite{Herau2006} and Dolbeault, Mouhot and Schmeiser \cite{DMS09, DMS15}, and it led to various results on kinetic equations. 
	 Notice that both Villani's and Dolbeault, Mouhot and Schmeiser's results on the exponential convergence rate highly depend on the dimension, and (therefore) do not apply to the case with mean-field interaction.
	It is noteworthy that in the recent paper by Cao, Lu and Wang \cite{CLW19}, the authors developed a new estimate on the convergence rate based on the variational method proposed by Armstrong and Mourrat \cite{AM19}.
	Let us also mention some work studying the ergodicity of underdamped Langevin dynamics using more probabilistic arguments, see e.g.~Ben Arous, Cranston and Kendall \cite{BCK95},  Wu \cite{Wu01}, Mattingly, Stuart, and Higham \cite{MSH02}, Rey-Bellet and Thomas \cite{RT02}, Talay \cite{Talay02}, Bakry, Cattiaux and Guillin \cite{BCG08}, Duong and Tugaut \cite{DT18}. These works are mostly based on Lyapunov conditions and the rates they obtain also depend on the dimension. 
	In the recent work by Guillin, Liu, Wu and Zhang \cite{GLWZ2021},  it was shown for the first time that a mean-field underdamped Langevin equation with non-convex potential is exponentially ergodic in $H^1_{m_\infty}$. The authors' argument combines Villani's hypocoercivity with a certain functional inequality and Lyapunov conditions.
	In  Monmarch\'e \cite{Mon17}, Guillin and Monmarch\'e \cite{GM2021}, the authors obtain an exponential convergence rate for the kinetic mean-field Langevin dynamics using a uniform propagation in chaos argument.  
To complete the brief literature review, we would draw special attention to the coupling argument applied in Bolley, Guillin and Malrieu \cite{BGM10} and  Eberle, Guillin and Zimmer \cite{EGZ19}, which found transparent convergence rates  in sense of Wasserstein-type distance.
	
	\vspace{0.5em}

	The main objective of this paper is to study the ergodicity of the underdamped Langevin dynamic in the mean-field setting of \eqref{eq:MFL_intro}.
	We will first develop the free energy approach in our setting to obtain a general convergence result of the marginal distribution $\Lc(X_t, V_t)$ to the invariant measure.
	Such an approach has already been initiated in Duong and Tugaut \cite{DT18} in a similar setting, but in a much more formal way.
	In a second part, we will apply the reflection-synchronous coupling technique that was initiated in Eberle, Guillin and Zimmer \cite{EGZ19, eberle2019quantitative} to obtain an exponential contraction result in some particular cases.

	\vspace{0.5em}

	For the first approach, we follows the main procedures as in Mei, Montanari and Nguyen \cite{mei2018mean}, Hu, Ren,  \v{S}i\v{s}ka and Szpruch \cite{HRSS19},  in particular the latter,  for the overdamped mean-field Langevin equation.
	First, we consider the follow new free energy function $\fF: \Pc_2(\R^n \x \R^n) \longrightarrow \R \cup \{\infty\}$
	\begin{equation} \label{intro:min}
		 \fF(m) 
		 ~:=~
		 F(m^X) +  \int_{\R^n \x \R^n} \frac{1}{2}|v|^2 m(dx, dv) +  \frac{\si^2}{2\gamma}  H(m),
	\end{equation}
	where $m^X \in \Pc_2(\R^n)$ denotes the pushforward measure of $m \in \Pc_2(\R^n \x \R^n)$ under the map $(x,v) \mapsto x$.
	We next deduce that the free energy is decreasing along the dynamics of the MFL,
	together with an explicit expression of the derivative $\frac{d \fF(m_t)}{dt}$.
	This is enough to show that, if $m^*$ is an accumulation points of $(m_t)_{t \ge 0}$ and it has a density function, then it satisfies
	\begin{align*}
		v + \frac{\si^2}{2}\nabla_v \log \big( m^*(x, v) \big) = 0.	
	\end{align*}
	Finally, by applying LaSalle's invariance principle for the dynamic system, we show that $m^*$ must satisfy the first order condition
	\begin{equation} \label{intro:FOC}
		D_m F(m^{*,X},x) + \frac{\si^2}{2 \gamma} \nabla_x \log \big( m^*(x, v) \big) = 0
		\quad \mbox{and}\quad
		v +  \frac{\si^2}{2 \gamma} \nabla_v \log \big( m^*(x, v) \big) = 0.
	\end{equation}
	When the potential function $F: \Pc_2(\R^n) \longrightarrow \R_+$ is convex so that $\fF: \Pc_2(\R^n \x \R^n) \longrightarrow \R \cup \{\infty\}$ is strictly convex, 
	the first order condition \eqref{intro:FOC} is sufficient to identify $m^*$ as the unique minimizer of $\fF$,
	which is also the unique invariant measure of \eqref{eq:MFL_intro}. 
	In this way, we are able to prove the uniqueness of the accumulation points of $(m_t)_{t \ge 0}$, which implies $m_t\longrightarrow m^*$.
	Due to the degeneracy and the mean-field interaction of the underdamped MFL process, the proof for the claim is non-trivial.
	In particular, we apply the time reversal technique for the SDE in F\"ollmer \cite{Follmer} to obtain some integrability properties of the marginal densities.
	
	\vspace{0.5em}

	For the second approach, we consider a potential functional $F: \Pc_2(\R^n) \longrightarrow \R$ which could be non-convex but is essentially with small (nonlinear) dependence on $m$, and aims at obtaining an exponential contraction result. 
	We mainly borrow the tools developed in  Eberle, Guillin and Zimmer \cite{EGZ19, eberle2019quantitative}, where   they initiate the reflection-synchronous coupling technique,  further validate it in the study of the  Langevin dynamics with a general non-convex potential, and make the point that the technique offers significant flexibility for additional development.
	But note that \cite{EGZ19} is not concerned with mean-field interaction and the rate found there is dimension dependent.
	In our context, we design a new metric involving a quadratic form (see Section \ref{sec:G}) to obtain the contraction when the coupled particles are far away, and as a result obtain a dimension-free convergence rate.
	The construction of the quadratic form shares some flavor with the argument in Bolley, Guillin and Malrieu \cite{BGM10}. 
	Notably, our construction helps to capture the optimal rate in the area of interest (see Remark \ref{rem:opt_rate}), so may be more intrinsic. 
	Notice that most of the articles concerning the ergodicity of underdamped Langevin dynamics obtain the convergence rates depending on the dimension, and in particular very few  allow both non-convex potential and the mean-field interaction. Some exceptions would be Guillin, Liu, Wu and Zhang \cite{GLWZ2021},  Monmarch\'e \cite{Mon17} and Guillin and Monmarch\'e \cite{GM2021}, but they focus on a particular convolution-type interaction. 
	When finishing our paper, we learned that independently Bolley, Guillin, Le Bris and Monmarch\'e \cite{BGBM20} are working out an exponential convergence result for the mean-field kinetic system through a similar approach.

\paragraph{More related works}

	Langevin dynamics have two specific limiting regimes: the Hamiltonian limit as $\gamma \to 0$, and the overdamped limit as $\gamma \to +\infty$ (in conjunction with a rescaling of time). There are various works in the literature which carefully study the scaling of the convergence rate in terms of the friction parameter $\gamma$, and obtain lower bounds $c \min(\gamma,\gamma^{-1})$, see for instance, Dolbeault, Klar, Mouhot, Schmeiser \cite{DKMS13}, Grothaus, Stilgenbauer \cite{GS16} and Iacobucci, Olla, Stoltz \cite{IOS19} for rather general potentials in addition to results for specific systems, see e.g.~Metafune, Pallara, Priola \cite{MPP2002} and Kozlov \cite{Kozlov89}. In our paper, as a corollary of the main convergence result, we shall prove that  the underdamped MFL dynamics also converges to its overdamped limit as $\g\rightarrow\infty$.

	Based on the ergodicity of the underdamped Langevin dynamic, and by considering  various discrete time versions, it has been developed the Hamiltonian Monte Carlo methods,
	where the objective is to sample according to the distributions in form of \eqref{eq:inv_classic}, see e.g.~Leli\`evre, Rousset and Stoltz \cite{LRS10}, Neal \cite{Neal11}, Bou-Rabee, Eberle and Zimmer \cite{BEZ20}, Bou-Rabee and Schuh \cite{BS2020}. 
	Nowadays this interest resurges in the community of machine learning. Notably, the underdamped Langevin dynamics has been empirically observed to converge more quickly to the invariant measure compared to the overdamped Langevin dynamics (of which the related MCMC was studied in e.g.~Dalalyan \cite{Dala17}, Durmus and Moulines \cite{DM16}), and it was theoretically justified by Cheng, Chatterji, Bartlett and Jordan in \cite{CCBJ18} for some particular choice of coefficients.

	\vspace{0.5em}
	
	Ergodicity of the underdamped MFL dynamics can also be used to solve optimization problems.
	As we shall see in Remark \ref{rem:entropyrewrite}, the optimization problem 
	$$
		\inf_{m\in \Pc_2(\R^n \x \R^n)} \fF(m) 
		~\mbox{is a regularized version of problem}~
		\inf_{m^X \in \Pc_2(\R^n)} F(m^X),
	$$
	while the latter appears naturally in the context of neural network with plenty of neurons (see e.g.~\cite{HRSS19} as well as Section \ref{sec:GAN}). 
	By identifying the minimizer $m^*= \argmin_{m\in \Pc_2(\R^n \x \R^n)} \fF(m)$ as  the limit of the marginal distributions of   the underdamped MFL dynamics, we justify the underdamped MFL dynamics as an efficient numerical algorithm to solve the mean-field optimization problem.

	\vspace{0.5em}

	The rest of the paper is organized as follows. In Section \ref{sec:mainresult} we announce the main results. Before entering the detailed proofs, we study  a numerical example concerning the so-called generative adversarial networks (GAN). The main theorems in Section \ref{sec:mainresult} guide us to propose a theoretical convergent algorithm for the GAN, and the numerical test in Section \ref{sec:GAN} shows a satisfactory result. Finally, we report the proofs in Section \ref{sec:proof}.

\section{Ergodicity of the mean-field Langevin dynamics} \label{sec:mainresult}

\subsection{Preliminaries} \label{sec:pre}

	Let us denote by $\Pc(\R^n)$ the space of all probability measures on $\dbR^n$, 
	and by $\Pc_p(\R^n)$ the space of all $m \in \Pc(\R^n)$ with finite $p$-th moment, for all $p \ge 1$. 
	Without further specification, in this paper the continuity on $\Pc_p(\R^n)$ is in the sense of $\cW_p$ ($p$-Wasserstein) distance, 
	i.e.
	$$
		\cW_p (\mu, \nu) ~:=~ \bigg( \inf_{\pi \in \Pi(\mu, \nu)} \int_{\R^n \x \R^n} |x-y|^p \pi (dx, dy) \bigg)^{1/p},
	$$
	where $\Pi(\mu, \nu)$ denotes the collection of all joint distribution on $\R^n \x \R^n$ with marginal distribution $\mu$ and $\nu$ on the first and second marginal space $\R^n$.
	The spaces $\Pc(\R^n \x \R^n)$, and $\Pc_p(\R^n \x \R^n)$ as well as the corresponding $p$-Wasserstein distance $\cW_p$ are defined similarly.

	\vspace{0.5em}

	A function $F:\Pc_2 (\R^n) \longrightarrow\dbR$ is said to be convex if
	\begin{equation} \label{eq:def_convex}
		F \big(\lambda m + (1-\lambda) m' \big) \le \lambda F(m) + (1-\lambda) F(m'),
		~~\mbox{for all}~ \lambda \in [0,1], ~m, m' \in \Pc_2(\R^n).
	\end{equation}
	We say $F$ is strictly convex if the above inequality is strict whenever $m \neq m'$ and $\lambda \in (0,1)$.
	Next, we say $F  \in {\cC^1}$,
	if there exists a continuous function $\frac{\d F}{\d m}:\Pc_2(\R^n) \times \dbR^n \longrightarrow \dbR$ satisfying $|\frac{\d F}{\d m}(m,x) | \le C(1 + |x|^2)$ for some constant $C> 0$, and such that, for all $m,m'\in\mathcal{P}(\dbR^n)$, one has
	\begin{align*}
		F(m') - F(m) = \int_0^1 \int_{\dbR^n} \frac{\d F}{\d m}\big( (1-u)m+um', x\big)~ (m'-m)(dx) du.
	\end{align*}
	When $\frac{\d F}{\d m}(m,x)$ is continuously differentiable in $x$, we define $D_m F: \Pc_2(\R^n) \x \R^n \longrightarrow \R^n$ by
	$$
		D_m F(m,x) ~:=~ \nabla_x \frac{\d F}{\d m}(m,x),
	$$
	which is the so-called intrinsic derivative of $F$ introduced by Lions \cite{LionsCours}.
	We also refer to \cite[Proposition 5.1.4, Proposition 5.1.5]{Card2018} and \cite[Section 2.2]{CDLLMaster2019} for more properties and interpretations of this notion of derivative.
	We say function $F\in \cC^\infty$ if, for all $k \in \N$, $i_1, \cdots, i_k \in \N$, the derivatives
	\begin{align*}
		\partial_{x_1}^{i_1}\cdots\partial_{x_k}^{i_k} D^k_m F(m,x_1, \cds, x_k)
		~\mbox{exist and are continuous.}
	\end{align*}

		\begin{eg}\label{eg:deriveF}
		Let us provide some simple examples of $F:\Pc_2(\R^n) \longrightarrow\dbR$ with its derivatives.
		\begin{enumerate}
		\item In case that $F$ is linear, namely, $F(m) = \int \phi(x) m(dx)$, where
		$\phi: \R^n \longrightarrow \R$ is continuously differentiable and has quadratic growth.
		Then 
		$$
			\frac{\d F}{\d m}(m,x) = \phi(x),
			~\mbox{and hence}~ 
			D_m F(m,x) = \nabla_x \phi(x).
		$$
		
		\item   In case that $F(m) = \int\int \phi(x, y) m(dx)m(dy)$. It is easy to check that 
		    $$ \frac{\d F}{\d m}(m,x) = \int \big(\phi(x,y)+\phi(y,x)\big) m(dy). $$
		
		\item In case that $F(m) = g\big(\int \phi(x) m(dx)\big)$ with with $g, \phi$ continuously differentiable, we may apply the chain rule to obtain that 
		$$
			\frac{\d F}{\d m}(m,x) =  g' \Big(\int \phi(y) m(dy)\Big) \phi(x)
			~\mbox{and thus}~
			D_m F(m,x) =  g' \Big(\int \phi(y) m(dy)\Big) \nabla_x\phi(x).
		$$
		\end{enumerate}
	\end{eg}

	Recall that, for $m\in \Pc_2(\R^n \x \R^n)$, we denote by $m^X \in \Pc_2(\R^n)$ the pushforward measure of $m$ under the map $(x,v) \mapsto x$.
	Denote by $H(m)$ the relative entropy of the measure $m\in \Pc_2(\R^n \x \R^n)$ with respect to the Lebesgue measure,
	that is,
	\begin{align*}
		H(m) 
		~:=~
		\E^m \big[  \log \big( m(X, V) \big) \big] 
		~=~  \int_{\R^n \x \R^n} \log \big( m(x, v) \big) m(x,v) dx dv.
	\end{align*}
	Notice that $H(m)$ is well defined for all $m \in \Pc_2(\R^n \x \R^n)$.
	Indeed, let $C > 0$ be the renormalization constant such that $C e^{-|x|^2- |v|^2}$ is a density function for a probability measure $\nu$,
	then
	\begin{align*}
		&\int_{\R^{2n}} \log \big( m(x, v) \big) m(x,v) dx dv \\
		=&
		\int_{\R^{2n}} \log \Big( \frac{m(x,v)}{C e^{-|x|^2 - |v|^2}}\Big) m(x,v) dx dv - \int_{\R^{2n}} \big( |x|^2 + |v|^2 \big) m(x,v) dx dv + \log(C)
	\end{align*}
	is well defined as the first term at the r.h.s. is the relative entropy between $m$ and $\nu$.
	In above, we use $m(x,v)$ to denote the density function of the probability measure $m \in \Pc_2(\R^n \x \R^n)$ by abuse of notation, and 
	let  $H(m) := \infty$ if $m$ does not has a density.
	In particular we recall that $H: \Pc_2(\R^n \x \R^n) \longrightarrow \R\cup\{ \infty\}$ is strictly convex, see e.g.~\cite[Lemma 1.4.3]{DE97}.

\subsection{The mean-field Langevin dynamics and free energy function}

	Throughout the paper we consider a potential function $F: \Pc_2(\R^n) \longrightarrow \R_+ \cup \{\infty\}$ in the form of
	\begin{align}\label{eq:potential_decomp}
		F(m^X)  ~=~ F_{\circ} (m^X) +   \int_{\R^n} f(x) m^X(dx),
	\end{align}
	where $F_\circ:\Pc_2(\R^n)\longrightarrow\R_+$ and $f:\dbR^n \longrightarrow\dbR_+$ satisfy $F_\circ \in \cC^1$ and $f \in C^1$.
	We slightly abuse the notation and denote
	\[ 
		\frac{\d F}{\d m }(m^X, x) := \frac{\d F_\circ}{\d m }(m^X, x) +f(x),
		\quad\mbox{and}\quad
		D_m F(m^X, x) := D_m F_\circ(m^X, x) + \nabla f(x).
	\] 
	Fixing a positive constant $\gamma > 0$, a constant $\sigma \neq 0$, we introduce the underdamped mean-field Langevin (MFL) dynamics:
	\begin{equation} \label{eq:MFL}
		d X_t = V_t dt, \quad 
		d V_t = -\big(D_m F(\Lc(X_t), X_t )+  \g V_t \big) dt + \si dW_t.
	\end{equation}

	We next consider the following free energy function minimization problem:
	\begin{align} \label{eq:minization}
		\inf_{m\in \Pc_2(\R^n \x \R^n)} \fF(m),
		   \quad\mbox{with}\quad		
		 \fF (m) := F(m^X) +  \int_{\R^n \x \R^n} \frac{|v|^2}{2} m(dx, dv) + \frac{\si^2}{2\g} H(m),
	\end{align}
	where $\fF: \Pc_2(\R^n \x \R^n) \longrightarrow \R \cup\{\infty\}$ is the so-called free energy function. Throughout the paper we also assume that $\fF$ is nontrivial, that is, there exists $m_\circ \in \Pc_2(\R^n \x \R^n)$ such that $\fF(m_\circ)$ is finite.
	The above free energy function is closely related to the Langevin dynamics \eqref{eq:MFL}.
	To see that, let us recall the first order condition for such an optimization problem from Hu, Ren, \v{S}i\v{s}ka and Szpruch \cite[Proposition 2.5]{HRSS19}. 

	\begin{lem}\label{lem:FOC}
		Let $F: \Pc_2(\R^n) \longrightarrow \R_+$ be a potential function in the form of \eqref{eq:potential_decomp}.
		If $m \in\argmin_{\mu \in \Pc_2(\R^n \x \R^n)} \fF(\mu)$,
		then $m$ has a density and there exists a constant $C \in \R$ such that
		\begin{align} \label{eq:FOCeq}
			\frac{\d F}{\d m }(m^X, x) + \frac{|v|^2}{2} + \frac{\si^2}{2\g}  \log \big( m(x, v) \big) = C,
			~~\mbox{for all $(x, v)\in \dbR^{n} \x \R^n$,}
		\end{align}
		or equivalently, for some $C' > 0$,
		\begin{align} \label{eq:FOCdensity}
			m(x, v) = C' \exp\bigg( - \frac{2\g}{\si^2} \Big(\frac{\d F}{\d m }(m^X, x) + \frac{|v|^2}{2} \Big)\bigg),
			~~\mbox{for all}~(x,v) \in \R^n \x \R^n.
		\end{align}
		Conversely, if $m \in \Pc_2(\R^n \x \R^n)$ has the density function given by \eqref{eq:FOCdensity}, and in addition $F$ is convex, then $m$ is a solution to \eqref{eq:minization}.
	\end{lem}

	\begin{rem}
		By direct computation, when $m$ in \eqref{eq:FOCdensity} is smooth enough, one can check directly that $m$ is a stationary solution to the Fokker-Planck equation \eqref{eq:FP} in the sense that
		$$
			 - v \cd \nabla_x m  + \nabla_v\cd \big((D_m F(m^X, x) +\g v) m\big)  + \frac{1}{2}  \sigma^2 \Delta_v m = 0.
		$$
		Besides, the marginal distribution of the underdamped MFL dynamics \eqref{eq:MFL} is a weak solution to the Fokker-Planck equation \eqref{eq:FP}.
		If \eqref{eq:FP} has a unique weak solution (with a given initial condition), then $m$ is an invariant measure to the dynamics \eqref{eq:MFL}.
		In fact, we will prove in our context that $m$ is indeed the unique invariant (probability) measure of the underdamped MFL dynamics \eqref{eq:MFL}.
	\end{rem}

	\begin{rem}\label{rem:entropyrewrite}
	$\mathrm{(i)}$ With the decomposition $F(m^X)  ~=~ F_{\circ} (m^X) +  \int_{\R^n} f(x) m^X(dx)$, 
	it is equivalent to define the free energy functional by
	\begin{align*}
		\fF(m) ~=~ F_\circ(m^X ) + \frac{\si^2}{2\g} H(m|\mu),
	\end{align*}
	where $\mu$ is the probability measure with density $C e^{-\frac{2\g}{\si^2}(f(x)+\frac12|v|^2)}$for some normalization constant $C> 0$, and $H(m|\mu)$ is the relative entropy of $m$ with respect to $\mu$.
	This is in fact the formulation of the free energy function in Hu, Ren, \v{S}i\v{s}ka and Szpruch \cite{HRSS19}. 
	Besides, it is assumed in \cite[Proposition 2.5]{HRSS19} that $F_\circ$ is convex and that $f(x) \ge \lambda |x|^2$ for some $\lambda > 0$.
	However,  the convexity of $F_{\circ}$ is only used to prove that \eqref{eq:FOCeq} or \eqref{eq:FOCdensity} is the sufficient condition for the optimality of $m$ in their proof.
	The growth condition on $f$ is only used to ensure the existence of a minimizer of $\fF(\mu)$, which is not stated in the above lemma.
	
	\vspace{0.5em}
	
	\noindent $\mathrm{(ii)}$ Notice that the entropy $H$ is strictly convex, 
	so that the free energy function $\fF$ is also strictly convex whenever $F$ is convex.
	Consequently, the optimization problem \eqref{eq:minization} has at most one minimizer, which must be the unique solution to  the first order equation \eqref{eq:FOCeq}. 

	\vspace{0.5em}
	
	\noindent 	
	$\mathrm{(iii)}$ In particular, given a probability measure $m$ satisfying \eqref{eq:FOCdensity}, one has
		$$
			\frac{\d F}{\d m }( m^X, x)   + \frac{\si^2}{2\g}  \log \big( m^X(x) \big) = C.
		$$
		For convex $F$, this implies (again thanks to  \cite[Proposition 2.5]{HRSS19}) that the marginal distribution $m^X$ is the minimizer of $m^X \longmapsto F(m^X) + \frac{\si^2}{2\g} H(m^X)$. Moreover, it follows from  \cite[Theorem 2.11]{HRSS19} that such $m^X$ is also the unique invariant measure of the overdamped MFL dynamics
		$$
			dX_t = -D_m F(\cL(X_t), X_t) dt + \frac{\si}{\sqrt \g} dW_t.
		$$
	\end{rem}

\begin{rem}
	To intuitively understand how the linear derivative $\frac{\d F}{\d m}$ characterizes the minimizer as in the first order condition \eqref{eq:FOCeq}, we may first ignore the terms $\frac12 \dbE^m\big[|V|^2\big]$ and $\frac{\si^2}{2\g} H(m)$ in the free energy $\fF (m)$ and consider a convex potential function $F:\Pc_2(\dbR^n)\longrightarrow\dbR$.
	Let $m \in \Pc_2(\R^n)$ be such that $\frac{\d F}{\d m}\big( m,x\big) = C$ for all $x \in \R^n$, 
	$m' \in \Pc_2(\R^n)$ be arbitrary and $m^\e := (1-\e) m +\e m'$ for $\e \in [0,1]$.
	Then, by the convexity of $F$, one has
	\begin{align*}
		F(m') - F(m) 
		&\ge  \frac{1}{\e} \Big( F(m^\e)-F(m) \Big) \\
		&=  \frac{1}{\e}  \int_0^\e \int_{\dbR^n} \frac{\d F}{\d m}\big( (1-u)m+um',x\big)~ (m'-m)(dx) du\\
		&\longrightarrow \int_{\dbR^n}\frac{\d F}{\d m}\big( m,x\big)(m'-m)(dx) ~=~ 0, \quad \mbox{as $\e\to 0$}.
	\end{align*}
	Therefore, for arbitrary $m' \in \Pc_2(\R^n)$, one has
	$$
		F(m') - F(m) \ge 0.
	$$
	In other words, when $F$ is convex and smooth enough, the condition $\frac{\d F}{\d m}\big( m,x\big) = C$ for all $x \in \R^n$ is sufficient  for $m$ being a minimizer of $F$.
\end{rem}

\subsection{Decay of the free energy and ergodicity of the Langevin dynamics}

	We will provide a first ergodicity result of the MFL dynamic \eqref{eq:MFL} based on the free energy approach.
	Let us first formulate some technical conditions.

	\begin{assum} \hspace{1mm} \label{assum:conv_initial}
	$\mathrm{(i)}$ The potential function $F: \Pc_2(\R^n) \longrightarrow \R_+$ is  in the form of \eqref{eq:potential_decomp},
		where $ F_\circ:\Pc_2(\R^n)\longrightarrow\R_+$ and $  f:\dbR^n \longrightarrow\dbR_+$ satisfy $F_\circ \in \cC^\infty$, $f \in C^\infty$ and
		\begin{align} \label{eq:flowerbound}
		f(x) \ge \l |x|^2, 
		~~\mbox{for all}~ x \in \R^n.
		\end{align}
		Moreover, for each $k \ge 2$, the derivatives $D^k_m F$  is bounded,
		and $D_m F(m^X, x)$ is Lipschitz, i.e.
		\begin{align*}
			\big| D_m F\big(m^X_1, x_1\big) -D_m F\big(m^X_2, x_2\big) \big| 
			~\le~
			C\Big(\cW_1\big(m^X_1, m^X_2\big) + |x_1 - x_2| \Big).
		\end{align*}
	\noindent $\mathrm{(ii)}$ For all $p \ge 1$, one has $\dbE[|X_0|^p+|V_0|^p]<\infty$, as well as $H(m_0) < \infty $.
\end{assum}

\begin{eg}
	Take the third example in Example \ref{eg:deriveF}, that is,  $F_\circ(m^X) := g\big(\int \phi(x) m^X(dx)\big)$
	with $g: \R \longrightarrow \R$ and $\phi: \R^n \longrightarrow \R$. 
	The intrinsic derivative reads 
	$$D_m F_\circ(m^X,x) ~=~  g' \bigg( \int \phi(y) m^X(dy) \bigg) \nabla_x\phi(x). $$ 	
	Then Assumption \ref{assum:conv_initial} holds true, provided that $f(x):= \l |x|^2$ and $g' \in C^{\infty}_b(\R)$, $\phi\in C^\infty_b(\R^n)$. 
	In particular, it covers the case where $g$ is the identify function, so that
	$$
		F_\circ(m^X) := \int \phi(x) m^X(dx)
		~~\mbox{and}~~
		D_m F_\circ(m^X, x) = \nabla_x \phi(x).
	$$
	as discussed in Example \ref{eg:deriveF}.
\end{eg}

\begin{rem}[Lower bound of the free energy function]\label{rem:lowerbound}
	Let $\mu \in \Pc_2(\R^n \x \R^n)$ denote the probability measure with the density $Ce^{-\frac{2\g}{\si^2}(|x|+|v|)}$,
	where $C > 0$ is the normalization constant. Then the free energy function can be rewritten as
	\begin{align*}
		\fF(m) = F_\circ(m^X) + \int \left(f(x) - |x| + \frac12 |v|^2 -|v|\right) m(dx,dv) + \frac{\si^2}{2\g} H(m|\mu) +  \frac{\si^2}{2\g} \log C.
	\end{align*}
	Note that  
	\begin{align*}
		f(x) - |x| + \frac12 |v|^2 -|v|+  \frac{\si^2}{2\g} \log C ~\ge~ \frac{\l}{2}|x|^2 +\frac14 |v|^2 -\widetilde C,
	\end{align*}
	where 
	$$
		\widetilde C ~:=~ \frac{1}{2 \lambda} + 1 +  \frac{\si^2}{2\g} \big| \log C \big|.
	$$
	Further, since $F_\circ(m^X) \ge 0$ and $H(m|\mu)\ge 0$,
	we obtain a lower bound for the energy function:
	\begin{align}\label{eq:lowerbound}
		\fF(m)  \ge  \int \left( \frac{\l}{2}|x|^2 +\frac14 |v|^2 \right) m(dx, dv)-\widetilde C.
	\end{align}
\end{rem}

	Under Assumption \ref{assum:conv_initial}, it is well known that the MFL equation \eqref{eq:MFL} admits a unique strong solution $(X_t, V_t)_{t \ge 0}$, see e.g.~the proof of Sznitman \cite[Theorem 1.1]{Sznit89} or Carmona \cite[Theorem 1.7]{Carmona2016Book}. 
	We first prove that the function $\fF$ defined in \eqref{eq:minization} decays along the marginal $m_t := \Lc(X_t, V_t)$ of the MFL dynamics \eqref{eq:MFL} $(X_t, V_t)_{t \ge 0}$.

\begin{thm}\label{thm:Lyapunov}
	Let Assumption \ref{assum:conv_initial} hold true. 
	Then, for all $t>0$, $m_t$ has a smooth and strictly positive density function, which is again denoted by $m_t(\cdot)$ by abuse of notation.
	Moreover, for all $t > s>0$, one has
	\begin{align*}
		\fF(m_t) - \fF(m_s) = - \int_s^t \g \dbE\bigg[\Big| V_r + \frac{\si^2}{2\g} \nabla_v \log \big( m_r (X_r, V_r) \big) \Big|^2 \bigg] dr.
	\end{align*}
\end{thm}

\begin{rem}
The time derivative of the free energy function has been calculated in the context of the classical underdamped Langevin dynamics in Bonilla et al.~\cite{BCS97}. Also, an informal analytical computation has been developed for some mean-field potentials in the paper of Dong and Tugaut \cite{DT18}.
\end{rem}

With the help of the free energy function $\fF$, we may prove the convergence of the marginal laws of \eqref{eq:MFL} towards the minimizer $\underline m:=\argmin_{m\in \Pc_2(\R^n \x \R^n)} \fF(m)$, provided that the function $F$ is convex.

\begin{thm}\label{thm:ergodicity}
	Let Assumption \ref{assum:conv_initial} hold true.
	Suppose in addition that the function $F$ (or equivalently $F_{\circ}$) is convex 
	(see \eqref{eq:def_convex}).
	Then the underdamped MFL dynamics \eqref{eq:MFL} has a unique invariant measure, which is also the unique minimizer $\underline m$ of \eqref{eq:minization}.
	Moreover, one has
	\begin{align*}
		\lim_{t\to\infty} \cW_1(m_t, \underline m)= 0.
	\end{align*}
\end{thm}

\begin{rem}
The ergodicity of diffusions with mean-field interaction is a long-standing problem. 
Theorem \ref{thm:ergodicity} shows that,
	for non-degenerate confinement potentials in the sense of \eqref{eq:flowerbound}, $F$ being convex on the space of probability measures (with 2nd order moment) is sufficient for the underdamped MFL dynamics to be ergodic. 
It is an analogue of Theorem 2.11 in Hu, Ren, \v Si\v ska and Szpruch \cite{HRSS19}, where it has been proved that the convexity of the potential function ensures the ergodicity of the overdamped MFL dynamics. 
\end{rem}

	We finally provide an analogue of the classical convergence result of the underdamped Langevin dynamic to the overdamped Langevin dynamic when $\gamma \longrightarrow \infty$.
	To this end, we set the scaling $\sigma = \sigma_0 \sqrt{\gamma}$ for some fixed constant $\sigma_0 > 0$,
	and denote by $(X^{\gamma}, V^{\gamma})$ the solution of the underdamped MFL dynamics \eqref{eq:MFL},
	with the same initial distribution, i.e. $\Lc\big(X^{\gamma}_0, V^{\gamma}_0\big) = m_0$.

\begin{cor}\label{cor:underover}
	Let Assumption \ref{assum:conv_initial} hold true, and suppose that the common initial distribution $m_0 = \Lc(X^{\gamma}_0, V^{\gamma}_0)$ (for all $\gamma > 0$) satisfies $\fF(m_0) < \infty$.
	Then, for all $t \ge 0$, one has $X^{\gamma}_{\gamma t} \longrightarrow Y_t$ in distribution as $\gamma \longrightarrow \infty$,
	where $Y$ is the overdamped MFL dynamic defined by
	\begin{equation} \label{eq:SDE_Y}
		dY_t = -D_m F\big(\Lc(Y_t),  Y_t\big) dt + \sigma_0 dW_t,
		~~\mbox{for all}~t > 0,
		~~\mbox{and}~~\Lc(Y_0) =  m_0.
	\end{equation}
\end{cor}

\subsection{Exponential ergodicity given small mean-field dependence}

	We now study the MFL dynamics \eqref{eq:MFL} under another set of technical conditions, where in particular $F$ is possibly non-convex but with small mean-field dependence. We shall obtain an exponential convergence rate if the invariant measure exists.

\begin{assum} \label{assum:exp}
	The potential function $F: \Pc_2(\R^n) \longrightarrow \R$ is given by
	\begin{align*}
		F(m^X)  ~=~ F_{\circ} (m^X) +   \frac{\lambda}{2} \int_{\R^n}  |x|^2 m^X(dx),
	\end{align*}
	where $F_{\circ}: \Pc_2(\R^n) \longrightarrow \R$ belongs to ${\cC^1}$ and $D_m F_{\circ}$ exists and is Lipschitz continuous.
	Moreover, for any $\e>0$, there exists $K>0$ such that for all $(m^X_1,x_1), (m^X_2, x_2)  \in \Pc_2(\R^n) \x \R^n$, one has
	\begin{equation} \label{eq:Cond_DmF}
		\big|D_m F_{\circ}(m^X_1,x_1) -D_m F_{\circ}(m^X_2,x_2) \big| 
		 \le
		\e |x_1-x_2|, \quad \mbox{whenever ~$|x_1-x_2|\ge K$}.
	\end{equation}
\end{assum}

\begin{rem}
	 Under Assumption \ref{assum:exp}, one has
		$$
			D_m F(m^X, x) = D_m F_{\circ}(m^X,x) ~+~ \lambda x.
		$$
		We highlight that the condition \eqref{eq:Cond_DmF} involves  two (possibly different) arguments $m^X_1, m^X_2 \in \Pc_2(\R^n)$.
		On the other hand,  \eqref{eq:Cond_DmF} only need to hold true for $|x_1-x_2|$ big enough, in particular, $D_m F_{\circ}$ is not necessarily a small perturbation of the linear term $\lambda x$, see Example \ref{eg:smallmf} below. 
\end{rem}

	Define the function $\psi: \R^n \x \R^n \longrightarrow \R_+$ by
	\begin{align} \label{eq:funpsi}
		\psi(z,w):= \big(1+\beta G(z, w+\gamma z)\big)h\big(\eta|w+\gamma z|+|z|\big),
	\end{align}
	where the positive constants $\b, \eta$, the non-negative quadratic form $G$ and the  
	non-decreasing concave function $h:\dbR_+\longrightarrow\dbR_+$ will be determined later, see \eqref{eq:defG} and \eqref{eq:def-h}. 
	Given $(x, v), (x',v')\in \dbR^{n} \x \R^n$, we denote
	\begin{align*}
		p:= v-v' + \g (x -x'),\quad r := |x-x'|, \quad u:= |p|, 
	\end{align*}
	and therefore
	\begin{align*}
		\psi(x-x', v-v') = \big(1+\b G( x-x' , p) \big) h(\eta u +r).
	\end{align*}
	Notice that $\psi(x-x', v-v')$ is a semi-metric between $(x,v)$ and $(x',v')$, we then define the semi-metric (see also discussions in Remark \ref{rem:W_psi} below):
	\begin{align*}
		\cW_\psi(m,m') = \inf\left\{\int \psi(x-x', v-v') d\pi (x,v,x',v'):  \pi~\mbox{coupling of $m,m'\in \Pc_2(\R^n \x \R^n)$}\right\}.
	\end{align*}
	Notice that there exists a constant $C>0$ such that $\psi(x-x', v-v') \ge C|(x-x',v-v')|$. Therefore, the semi-metric dominates the Wasserstein-$1$ distance in the sense that
	\begin{align}\label{WpsiW1}
	\cW_\psi(m,m') \ge C\cW_1(m,m'),\quad \mbox{for $m,m'\in \Pc_2(\R^n \x \R^n)$}.
	\end{align}

	\begin{thm}[Exponential convergence under small mean-field condition] \label{thm:Contraction}
	Let Assumption \ref{assum:exp} hold true. 
	Assume in addition that, for all $m^X_1 ,m^X_2 \in\Pc_2(\R^n)$ and $x \in \R^n$, one has
	\begin{align*}
	\big|D_m F_{\circ}\big(m^X_1, x\big) - D_m F_{\circ}\big(m^X_2,x\big)\big| \le \iota \cW_1\big(m^X_1, m^X_2\big).
	\end{align*}
	Then, for $\iota>0$ small enough (satisfying the quantitative condition \eqref{eq:smalliota} below), we have
	\begin{align*}
          \cW_\psi(m_t, m'_t) \leq e^{-ct} \cW_\psi(m_0, m'_0),
	\end{align*}
	where $c > 0$ is  a constant defined below in \eqref{eq:ciota} (see also Remark \ref{rem:convergence rate} for a lower bound of the constant $c> 0$ in a specific example). 
	In particular, the rate $c > 0$ does not depend on the dimension $n$, and for some $C > 0$, it satisfies that
	\[c \le C\overline\g, \quad \mbox{where} \quad\overline \g : = 
	\begin{cases}
	\gamma - \sqrt{\gamma^2-4\lambda},   & \mbox{if }\gamma^2>4\lambda, \\
\gamma,  & \mbox{if }\gamma^2<4\lambda.
	\end{cases} \]
	\end{thm}

	\begin{eg} \hspace{1mm} \label{eg:smallmf}
		$\mathrm{(i)}$ Let $F_{\circ}(m^X) := \int g(x) m^X(dx)$ for some smooth function $g: \R^n \longrightarrow \R$ such that 
			$$ \lim_{|x| \to \infty} \big| \nabla^2 g(x) \big| = 0. $$
			Then, $D_m F_{\circ}(m^X,x) = \nabla g(x)$ satisfies  \eqref{eq:Cond_DmF} and it has no mean-field dependence. 
		
		\vspace{0.5em}
		
		\noindent $\mathrm{(ii)}$ Let 
		     $$ F_\circ(m^X) := g\bigg(\int \phi(x) m^X(dx)\bigg) $$ 
		     with $g, \phi$ continuously differentiable, so that 
			 $$ D_m F_\circ(m^X,x) =  g' \left(\int \phi(y) m^X(dy)\right) \nabla_x\phi(x). $$ 
			Assume that $g \ge 0$, and $g', \nabla_x \phi$ are bounded, then $D_m F_\circ$ is bounded and hence it satisfies \eqref{eq:Cond_DmF}.
			In addition, if $g'$ is $L$-Lipschitz, then
			\[|D_m F_{\circ}(m^X_1, x) - D_m F_{\circ}(m^X_2,x)| \le L  \|\nabla_x\phi\|_\infty \cW_1(m^X_1, m^X_2). \]
			It follows that the conditions of Theorem \ref{thm:Contraction} are satisfied when $L$ is small enough.
	\end{eg}

\begin{rem}[Small mean-field condition]
As we shall see in the proof, it is important to first understand the exponential convergence for the Markov diffusion (without mean-field dependence). Then, the small mean-field dependence, viewed as a small perturbation from the Markov diffusion, is a ``sufficient'' condition for inheriting the desired exponential convergence. On the other hand, from the following example, we shall see that sometimes    the small mean-field dependence is also necessary for the convergence result. 
Consider the potential function 
	$$
		F(m^X) 
		 :=
		\frac12 \left( \int_{\R^n} |x|^2 m^X(dx) - \a \Big( \int_{\R^n} |x| m^X(dx) \Big)^2 \right).
	$$
Notice that $F$ is non-convex (indeed concave). 
The corresponding underdamped MFL dynamics reads:
	\begin{align*}
	\begin{cases}
		dX_t =V_t dt \\
		dV_t = - (X_t - \a \dbE[X_t] +\g V_t) dt + \si dW_t.
	\end{cases}
	\end{align*}
It is not hard to show that if $\a >1$, then $\dbE[X_t]$ diverges. In other words, in this case one cannot expect the convergence of marginal distribution when the mean-field dependence is large. 
\end{rem}

\begin{rem}[Semi-metric $\cW_\psi$] \label{rem:W_psi}
	We observe that $\psi$ is not a function of the norm $|(x-x',v-v')|$. Furthermore, the quadratic form $G(x-x',p)$ is convex in $(x-x',v-v')$, while $h$ is concave. Thus, $\psi(x-x',v-v')$ defines a semi-metric (rather than a metric) on $\R^n \times \R^n$, and $\cW_\psi$ is a semi-metric on the space of measures. Consequently, the contraction result established above does not guarantee the existence of the invariant measure, but only characterizes the convergence rate under $\cW_\psi$, assuming that the invariant measure exists (e.g. when $F$ is convex). Finally, considering the domination of $\cW_1$ by $\cW_\psi$, the contraction result also implies exponential convergence to the invariant measure in $\cW_1$.
\end{rem}

\begin{rem}[Comparison with recent results on coupling of kinetic Langevin dynamics]
	The proof of Theorem \ref{thm:Contraction} relies on the reflection-synchronous coupling technique, which was developed by Eberle, Guillin, and Zimmer in \cite{EGZ19}. 
	 In their work, they establish a contraction result under a semi-metric $\cW_{\widehat\psi}$ with   
	\begin{align}\label{eq:psihat}
	 \widehat \psi\big((x,v), (x', v')\big):= \big(1+ \eps\cV(x,v)+ \eps\cV(x',v')\big)h(\eta |v-v' + \g (x -x')| +|x-x'|),
	\end{align}
	where $h$ is a constructed non-negative increasing concave function and $\cV$ is a constructed Lyapunov function. 
	Notably, their contraction result holds under more general conditions than our Assumption \ref{assum:exp}. Specifically, they allow for a more general confinement function than $\frac{\l}{2}|x|^2$, but their contraction rate is dimension-dependent, and the kinetic dynamics do not permit mean-field dependence. To overcome these limitations, we introduce a new semi-metric that enables us to obtain exponential ergodicity in scenarios with small mean-field dependence. 
	Note that our contraction result has an advantage over the one presented in \cite{EGZ19}, even in cases without mean-field dependence.  For example, consider  the MFL dynamics \eqref{eq:MFL}  $(X_t, V_t)$  (resp. $(X',V')$) starting from $(x,v)$ (resp. $(x',v')$).  Suppose we have a Lipschitz continuous function $w:\dbR^n\x\dbR^n\rightarrow \dbR$, and define
	\begin{align*}
	\eta(x,v): = \dbE\left[ \int_0^\infty w(X_t, V_t)dt\right].
	\end{align*} 
	Our contraction result guarantees that
\begin{align*}
|\eta(x,v) - \eta(x',v')| &\le C \int_0^\infty \cW_1(m_t, m'_t)dt \le C \int_0^\infty \cW_\psi (m_t, m'_t)dt\\
 &\le C \big(1+\b G( x-x' ,v-v' + \g (x -x') ) \big) |(x,v)-(x',v')|.
\end{align*}
In particular, $\limsup_{(x',v')\rightarrow(x,v)} \frac{|\eta(x,v) - \eta(x',v')|}{ |(x,v)-(x',v')|} \le C$. Hence, $\eta$ is globally Lipschitz continuous. On the other hand, using the contraction result from \cite{EGZ19} and a similar estimate, we obtain
\[|\eta(x,v) - \eta(x',v')| \le C \big(1+ \eps\cV(x,v)+ \eps\cV(x',v')\big) |(x,v)-(x',v')|,\]
which implies that $\eta$ is only  locally Lipschitz.

Since the initial version of this paper, there have been recent advancements in the coupling method for the kinetic Langevin dynamics. Notably,  Guillin, Le Bris, and Monmarch\'e \cite{GLM22} and Schuh \cite{Schuh22} have both made important contributions using the reflection-synchronous coupling technique introduced in \cite{EGZ19}. In \cite{GLM22}, the authors establish a contraction result under a semi-metric $\cW_{\widehat\psi}$ with a function $\widehat\psi$ of the same form as in \cite{EGZ19} (see \eqref{eq:psihat}), but for more general cases where the kinetic process has small mean-field dependence and the confinement function can exhibit growth larger than quadratic. In \cite{Schuh22}, the author assumes a confinement function with at most quadratic growth and additionally requires  the function $x\mapsto D_m F^\circ (m,x)$ to be $L_g$-Lipschitz continuous with a small enough constant $L_g$. The author successfully proves a contraction result under a metric instead of a semi-metric. 
	
	 It is also worth mentioning that Guillin, Liu, Wu, and Zhang provided a proof of exponential ergodicity in \cite{GLWZ2021} for underdamped Langevin dynamics with convolution-type interactions, using an entirely different approach based on Villani's hypocoercivity and functional inequality.
\end{rem}

\begin{rem}[Mean-field Hamiltonian Monte Carlo method in \cite{BS2020}]
In their very recent work, Rou-Rabee and Schuh \cite{BS2020} provide a quantitative rate for the convergence  of unadjusted Hamiltonian Monte Carlo method for some mean-field models. More precisely, at each round $k$ of their Monte Carlo algorithm, they simulate the McKean-Vlasov process on $[0,T]$:
\begin{align*}
\begin{cases}
 dX^k_t = V^k_t dt, & \quad X^k_0 := X^{k-1}_T\\
 dV^k_t = \left(-\nabla_x f(X^k_t) - \eps \widetilde\dbE\left[\nabla_x w\big(X^k_t- \widetilde X^k_t\big) -\nabla_x w\big(\widetilde X^k_t - X^k_t\big) \right]\right)dt, & \quad V^k_0\sim \cN\big(0, \frac{\si^2}{2\g}\big),
\end{cases}
\end{align*}
where $\widetilde X^k$ is an independent copy of $X^k$, and the expectation $\widetilde{\mathbb{E}}$ is taken with respect to $\widetilde X^k$. Their main result, Theorem 3, shows that for small enough $\eps$, the distribution $\mathcal{L}(X^k_0)$ converges exponentially as $k\rightarrow \infty$.
Recall the second example in Example \ref{eg:deriveF} and note that 
\[
\nabla_x f(X^k_t) +\eps \widetilde\dbE\left[\nabla_x w\big(X^k_t- \widetilde X^k_t\big) -\nabla_x w\big(\widetilde X^k_t - X^k_t\big) \right] = D_m F\big(\Lc(X_t), X_t\big),
 \]
 where 
   $$ F(m):= \int f(x) m(dx) + \eps \int w(x - y) m(dx) m (dy), $$ 
  so the potential function in their study is a specific example of the one in our paper, although their argument may be adaptable to more general cases. The simulated McKean-Vlasov process above closely resembles the underdamped MFL dynamics \eqref{eq:MFL}, except that it replaces the damping term $-\gamma V_tdt$ and the Brownian noise $\sigma dW_t$ by setting $V^k_0\sim \mathcal{N}(0, \frac{\sigma^2}{2\gamma})$ at each iteration.

Based on the results, both papers require small mean-field dependence to ensure exponential convergence, even though they differ in the assumptions on the potential function $F$. Both papers use the component-wise coupling technique. In our paper, we apply the coupling to the Brownian noise $dW_t$, while in \cite{BS2020}, the authors apply a similar coupling to the Gaussian variable $V^k_0$. A significant difference is that we need to control both the couplings of $X$ and $V$ to prove the contraction for the underdamped MFL dynamics \eqref{eq:MFL}, while in \cite{BS2020}, the authors only need to consider the coupling of $X$ (as the law of $V^k_0$ is fixed). Consequently, our function to measure the coupling distance $\psi$ in \eqref{eq:funpsi} is more complicated than the counterpart in \cite{BS2020}, and the calculus in Section \ref{subsec:proofcontraction} is more elaborate.
\end{rem}

\section{Application to GAN}\label{sec:GAN}

\subsection{A mathematical model of GAN}
The mean-field Langevin dynamics draws increasing attention among the attempts to rigorously prove the trainability of neural networks, in particular the two-layer networks (with one hidden layer). It becomes popular (see e.g.~Chizat and Bach \cite{chizat2018global}, Mei, Montanari and Nguyen \cite{mei2018mean}, Rotskoff and Vanden-Eijnden \cite{rotskoff2018neural}, Hu, Ren, \v Si\v ska and Szpruch \cite{HRSS19}) to rewrite the two-layer network training problem as an optimization problem over the space of probability measures.
Namely, a two-layer network training problem can be formulated as 
	\begin{equation*}
		\inf_{c, a, b} \int \Big|y - \sum_i c_i \f(a_i z +b_i) \Big|^2 \mu(dy, dz), 
	\end{equation*}
	with the distribution $\mu$ of the data $z$ and the label $y$.
	By considering the law $m$ of the random variable $X:=(C, A, B)$ in $\R^n$,
	one can reformulate it as 
	\begin{align}\label{eq:mf_nn}
		&\inf_{m\in \Pc_2(\R^n)} F(m),\notag\\
		&\mbox{where}~~
		F(m):=   \! \int \! \big|y - \dbE^{m} [\Phi(X,z)] \big|^2 \mu(dy, dz) ~~\mbox{and}~~\Phi(X,z ):=C\f(Az+B).
	\end{align}
	In Mei, Montanari and Nguyen \cite{mei2018mean} and Hu, Ren, \v Si\v ska and Szpruch \cite{HRSS19} the authors further add an entropic regularization to the minimization:
	\begin{equation} \label{eq:overdamp_opt}
		\inf_{m\in \Pc_2(\R^n)}\left\{ F(m) + \frac{\si^2}{2} H(m)\right\}.
	\end{equation}
 	It  is due to \cite[Proposition 2.5]{HRSS19}  that
	an optimal solution $m^*$ to \eqref{eq:overdamp_opt} admits a density and  satisfies the first order necessary condition
	\begin{equation*}
		D_m F(m^*,x) + \frac{\si^2}{2}\nabla_x \log \big( m^*(x) \big) =0,
	\end{equation*}
	where the intrinsic derivative $D_m F(m,x) $ reads, with $x = (c,a,b)$,
	\begin{equation*}
	D_m F(m, x) = \! \int \! 2 \Big( \dbE^{m} [C\f(Az+B)] -y\Big)\begin{pmatrix}
		\f(az+b)\\
		c\dot\f(az+b)z\\
		c\dot\f(az+b)
		\end{pmatrix} \mu(dy, dz).
	\end{equation*}
Moreover, since  $F$ defined above is convex and $H$ is strictly convex, this is also a sufficient condition for $m^*$ being the unique minimizer. 
	It has been proved in \cite[Theorem 2.11]{HRSS19} that such $m^*$ can be characterized as the invariant measure of the overdamped mean-field Langevin dynamics:
    \begin{equation*}
     dX_t = - D_m F(\Lc(X_t), X_t) dt +\si dW_t.
    \end{equation*}
	Also it  has been  shown that the marginal laws $m_t$ converge towards $m^*$ in Wasserstein metric. Notably, the (stochastic) gradient descent algorithm used in training the neural networks can be viewed as a numerical discretization scheme for the overdamped MFL dynamics, see \cite[Section 3.2]{HRSS19} for more details.
	 It is  noteworthy that the optimization of the weights of the deep neural network (containing more than one hidden layer) can also be formulated as a mean-field optimization problem, which however is not convex. As a result, a general theory for the deep learning via mean-field optimization is still absent, though some partial results and analogs have been developed, see e.g.~Hu, Kazeykina and Ren \cite{HKR19}, Jabir, \v{S}i\v{s}ka and Szpruch \cite{JSS19}, Conforti, Kazeykina and Ren \cite{CKR20}, Domingo-Enrich, Jelassi and Mensch \cite{DJMRB2020}, \v{S}i\v{s}ka and Szpruch \cite{SS20}, Lu, Ma, Lu, Lu and Ying \cite{LMLLY20}.

Recently, there is a strong interest in generating samplings according to a distribution only empirically known using the so-called generative adversarial networks (GAN), see e.g.~the pioneering work  \cite{GAN14}.   
From a mathematical perspective, the GAN can be viewed as a (zero-sum) game between two players: the generator and the discriminator, and can be trained through an overdamped Langevin process, see e.g.~Conforti, Kazeykina and Ren \cite{CKR20}, Domingo-Enrich, Jelassi, Mensch, Rotskoff and Bruna \cite{DJMRB2020}. On the other hand, it has been empirically observed and theoretically  proved in some cases in Cheng, Chatterji, Bartlett and Jordan in \cite{CCBJ18} as well as in Cheng, Chatterji, Abbasi-Yadkori, Bartlett and Jordan \cite{CCAYBJ2020} that the simulation of the underdamped Langevin process converges more quickly than that of the overdamped Langevin dynamics.
Therefore, in this section we shall implement an algorithm to train the GAN through the underdamped mean-field Langevin dynamics.

\vspace{0.3cm} 

We first recall the mathematical model of GAN in \cite{CKR20}. The task of the discriminator is to measure the difference between the target measure $\hat\mu$ and another given measure $\mu\in \Pc_2(\dbR^{n_1})$. In our toy model we ask the discriminator to solve (approximately) the following maximization problem:
\begin{align*}
D(\mu, \hat\mu) : = \sup_{m^X\in \Pc_2(\dbR^{n_2})} \left\{ -F_\circ(m^X, \mu) \right\}:=\sup_{m^X\in \Pc_2(\dbR^{n_2})} \int_{\dbR^{n_1}}\dbE^{m^X}[\Phi(X, z) ](\mu -\hat\mu) (dz),
\end{align*}
where $z\mapsto \dbE^m[\Phi(X, z) ]$ is the output of the two-layer network with an activation function $\f$ as in \eqref{eq:mf_nn}. Indeed, the functional $D(\cd, \cd)$, resembling the dual form of the Wasserstein-$1$ metric, can be viewed as a distance between two probability measures,  and hence our model is similar to the mathematical model of the once popular Wasserstein-GAN \cite{ACB2017}. On the other hand, the generator aims at sampling a probability measure  $\mu$ so as to  minimize the distance $D(\mu, \hat\mu)$. 
The solution to this minimization problem is the required distribution $ \hat\mu$.


Let us add the velocity variable $V$ and the regularizers to the potential $F_\circ$:  
	\begin{align*}
		\fF(m, \mu) := -F_\circ(m^X, \mu)- \frac{\eta}{2}\dbE^m[|V|^2] +\frac{\l_0}{2} \int |z|^2 \mu(dz) - \frac{\l_1}{2} \dbE^m[|X|^2] + \frac{\si^2_0}{2} H(\mu) - \frac{\eta\si^2_1}{2\g}H(m).
	\end{align*}
The regularized GAN aims at computing the Nash equilibrium of the zero-sum game:
	\begin{eqnarray}\label{eq:game}
		\begin{cases}
			{\rm generator:}      &    \inf_{\mu\in \Pc_2(\dbR^{n_1})} \fF(m, \mu)\\
			{\rm discriminator:} &    \sup_{m\in \Pc_2(\dbR^{n_2}\times\dbR^{n_2})} \fF(m, \mu)
		\end{cases}.
	\end{eqnarray} 
\begin{rem}
A zero-sum game is a game in which the two players aim at minimizing and maximizing the same objective function. In particular, let $(m^*, \mu^*)$ be a Nash equilibrium of the zero game \eqref{eq:game}, i.e.
\[\mu^* = \argmin_{\mu\in \Pc_2(\dbR^{n_1})} \fF(m^*, \mu)\quad\mbox{and} \quad m^* =\argmax_{m\in \Pc_2(\dbR^{n_2}\times\dbR^{n_2})} \fF(m, \mu^*)\]
(since $\mu \mapsto \fF(m^*, \mu)$ and $m \mapsto -\fF(m, \mu^*)$ are both strictly convex, the minimizer and the maximizer above are both unique). It is well-known that the couple $(m^*, \mu^*)$ solves the min-max problem:
\begin{align*}
	\fF(m^*, \mu^*) =  \inf_{\mu\in \Pc_2(\dbR^{n_1})} \sup_{m\in \Pc_2(\dbR^{n_2}\times\dbR^{n_2})} \fF(m, \mu) =  \sup_{m\in \Pc_2(\dbR^{n_2}\times\dbR^{n_2})} \inf_{\mu\in \Pc_2(\dbR^{n_1})} \fF(m, \mu).
\end{align*}
\end{rem}
\begin{rem}
Here we briefly report the relation between the regularized game \eqref{eq:game} and the non-regularized one. Take a sequence $(\l^n_0, \l^n_1, \si^n_0, \si^n_1) \rightarrow 0$ as $n\rightarrow\infty$, denote the corresponding regularized function by $\fF^n$, and denote by $(m^n, \mu^n)$ the corresponding Nash equilibria. Let $(m^*, \mu^*)$ be one of the $\cW_2$-accumulation point of $(m^n, \mu^n)$, namely it is a Wasserstein-2 limit of a subsequence, still denoted by  $(m^n, \mu^n)$.   

Note that the regularizers for the generator and the discriminator,  namely $\frac{\l^n_0}{2} \int |z|^2 \mu(dz)+ \frac{(\si^n_0)^2}{2} H(\mu) $ and $\frac{\l^n_1}{2} \dbE^m[|X|^2] + \frac{\eta(\si^n_1)^n}{2\g}H(m)$, both $\Gamma$-converge (with respect to the $\cW_2$-distance) to $0$ as $n\rightarrow \infty$, see e.g.~the proof of \cite[Proposition 2.3]{HRSS19}. 
As a result, we have
  \begin{equation} \label{eq:limsupliminf}
  	\begin{aligned}
  	   \limsup_{n\rightarrow\infty} \fF^n(m^n, \mu^n) &= \inf_{\mu\in \Pc_2(\dbR^{n_1})}  \fF^\infty(m^*, \mu) \quad \mbox{and} \\
  	   \liminf_{n\rightarrow\infty} \fF^n(m^n, \mu^n) &= \sup_{m\in \Pc_2(\dbR^{n_2}\times\dbR^{n_2})} \fF^\infty(m, \mu^*),
  	\end{aligned}
  \end{equation}
where 
  $$ \fF^\infty(m,\mu):= -F_\circ(m^X, \mu)- \frac{\eta}{2}\dbE^m[|V|^2]. $$
It follows from \eqref{eq:limsupliminf} that
\begin{align*}
\fF^\infty(m^*, \mu^*) \ge \inf_{\mu\in \Pc_2(\dbR^{n_1})} \fF^\infty(m^*, \mu) \ge  \sup_{m\in \Pc_2(\dbR^{n_2}\times\dbR^{n_2})} \fF^\infty(m, \mu^*) \ge \fF^\infty(m^*, \mu^*),
\end{align*}
in other word, $(m^*, \mu^*)$ is a Nash equilibrium of the limit game. In particular, we have 
\begin{align*}
D(\mu^* ,\hat\mu) &=\sup_{m\in \Pc_2(\dbR^{n_2}\times\dbR^{n_2})} \left\{ -F_\circ(m^X, \mu^*) \right\} \\
&= \sup_{m\in \Pc_2(\dbR^{n_2}\times\dbR^{n_2})} \left\{ -F_\circ(m^X, \mu^*)- \frac{\eta}{2}\dbE^m[|V|^2]\right\} \\
&= \fF^\infty(m^*, \mu^*)\\
 &= \inf_{\mu\in \Pc_2(\dbR^{n_1})} \sup_{m\in \Pc_2(\dbR^{n_2}\times\dbR^{n_2})} \fF^\infty( m,\mu)\\
&=  \inf_{\mu\in \Pc_2(\dbR^{n_1})} \sup_{m\in \Pc_2(\dbR^{n_2}\times\dbR^{n_2})} \left\{ -F_\circ(m^X, \mu)- \frac{\eta}{2}\dbE^m[|V|^2]\right\} \\
&=\inf_{\mu\in \Pc_2(\dbR^{n_1})} \sup_{m\in \Pc_2(\dbR^{n_2}\times\dbR^{n_2})} \left\{ -F_\circ(m^X, \mu) \right\}\\
&= \inf_{\mu\in \Pc_2(\dbR^{n_1})} D(\mu ,\hat\mu)=0,
\end{align*}
 where the third and the forth equalities are due to the fact that $(m^*, \mu^*)$ is a Nash equilibrium.  Finally we conclude that $\mu^* = \hat\mu$, and that the initial sequence (not only the subsequence) $\mu^n\rightarrow\hat\mu$ in $\cW_2$. Therefore, when $\l^n_0, \l^n_1, \si^n_0, \si^n_1$ are small, the generator should eventually  generate the distribution $\mu^n$ close to the target distribution $\hat \mu$.
\end{rem}

In order to compute the equilibrium of the game \eqref{eq:game}, we observe as in Conforti, Kazeykina and Ren \cite{CKR20} that given the choice $m$ of the discriminator, the optimal response $\mu^*[m]$ of the generative must satisfy the first order condition
\[\frac{\d F}{\d \mu}(m, \mu^*[m], z) + \frac{\si^2_0}{2} \log\mu^*[m](z) = \mathrm{Constant}. \]
Therefore it has the density
	\begin{align} \label{eq:opt_gen}
		\mu^*[m](z) = C(m) e^{-\frac{2}{\si_0^2}\left(\dbE^{m^X}\left[\Phi(X, z)\right] + \frac{\l_0}{2}|z|^2\right)},
	\end{align}
    where $C(m)$ is the normalization constant depending on $m$. 
Then computing the value of the zero-sum game becomes an optimization over $m$:
	\begin{align*}
		\sup_{m\in \Pc_2(\dbR^{n_2}\times\dbR^{n_2})} \inf_{\mu\in \Pc_2(\dbR^{n_1})} \fF(m,\mu) = \sup_{m\in \Pc_2(\dbR^{n_2}\times\dbR^{n_2})} \fF(m, \mu^*[m]).
	\end{align*}	
Thanks to Theorem \ref{thm:ergodicity}, the optimizer of the problem above can be characterized by the invariant measure of the underdamped MFL dynamics
	\begin{align} \label{eq:MFLapp}
		d X_t = \eta V_t dt,
		~~~~
		d V_t = -\big( D_m F(\Lc(X_t), X_t )+  \g V_t \big) dt + \si_1 dW_t,
	\end{align}
	with the potential function:
	\begin{align*}
		F(m):= F_\circ\big(m^X, \mu^*[m]\big) + \frac{\l_1}{2} \dbE^m\big[|X|^2\big]- \frac{\si^2_0}{2} H(\mu^*[m]).
	\end{align*}
Together with \eqref{eq:opt_gen}, we may calculate and obtain
	\begin{align}\label{eq:DmFapp}
		D_m F(m, x) = \int \nabla_x \Phi(x, z)(\hat\mu - \mu^*[m] )(dz) + \l_1 x.
	\end{align}

\begin{rem}
Instead of reporting the rigorous but tedious computation to obtain \eqref{eq:DmFapp}, here is a quick way to intuitively realize the form of $D_m F$. Observe the following \textnormal{formal} calculus:
\begin{align*}
	D_m F(m,x) &= - D_m \left(\int_{\dbR^{n_1}}\dbE^{m^X}[\Phi(X, z) ](\mu^*[m] -\hat\mu) (dz) + \frac{\si^2_0}{2} H(\mu^*[m])\right) + \l_1 x\\
	&= \int \nabla_x \Phi(x, z)(\hat\mu - \mu^*[m] )(dz) + \l_1 x \\
	& \quad + \frac{\pa}{\pa \mu^*[m]} \left(\int_{\dbR^{n_1}}\dbE^{m^X}[\Phi(X, z) ](\mu^*[m] -\hat\mu) (dz) + \frac{\si^2_0}{2} H(\mu^*[m])\right) D_m \mu^*[m].
\end{align*}
Since by its definition $\mu^*[m]\in\argmin_{\mu\in \Pc_2(\dbR^{n_1})}  \left(\int_{\dbR^{n_1}}\dbE^{m^X}[\Phi(X, z) ](\mu -\hat\mu) (dz) + \frac{\si^2_0}{2} H(\mu)\right)$, the formal ``partial derivative'' term above should be equal to $0$, and this leads to the result \eqref{eq:DmFapp}. 
\end{rem}

\subsection{Numerical test}

We shall illustrate the theoretical result with a simple numerical example. 
Set $\hat \mu$ as the empirical law of $\widehat M=2000$ samples, denoted by $(\widehat z_k)_{k\le \widehat M}$, of the distribution $\frac12\cN(-1,1) + \frac12 \cN(4,1)$. We choose the activation function of our two-layer neural network as 
	\begin{align*}
		\f (\o) = \max\{-10, \min\{10,\o\}\}, 
	\end{align*}
 and the coefficients of the model as:
	\begin{align}\label{parametersapp}
		\eta =0.3, ~~\si_0 = 0.5,  ~~ \g =3, ~~ \l_0=\l_1 =0.02.
	\end{align}
We shall approximate the MFL dynamics \eqref{eq:MFLapp} using the following Euler scheme of the $N$-particle system: for all $1\le i \le N$, $t\in \dbN,$
\begin{align} \label{eq:Euler}
\begin{cases}
	X^i_{t+1} - X^i_t  = \eta V^i_t \D t  \\
	V^i_{t+1} - V^i_t  = -\left( D_m F\Big(\frac1N\sum_{j=1}^N \d_{X^j_t}, X^i_t \Big)+  \g V^i_t \right) \D t + \si_1 \sqrt{\D t} \cN^i_t,
\end{cases}
\end{align}
where $(\cN^i_t)_{i, t}$ are independent normal Gaussian random variables. In our numerical test we set
\begin{align*}
N = 3000, ~~ \D t = 0.005,
\end{align*}
and set the initial values of $(X^i, V^i)_i$ according to the following Gaussian distributions:
\begin{align*}
X\sim \cN(0, 1)*12, ~~ V\sim\cN(0,1)*0.2.
\end{align*}
Recall that in order to evaluate $D_m F\big(\frac1N\sum_{j=1}^N \d_{X^j_t}, X^i_t \big)$ in the MFL dynamics, defined in \eqref{eq:DmFapp}, one need to sample the optimal response of the generator $\mu^*[\frac1N\sum_{j=1}^N \d_{X^j_t}]$, defined in \eqref{eq:opt_gen}. Given $M$ such samples, denoted by $(z^k_t)_{k\le M}$, we may evaluate
$$
	D_m F\big(\frac1N\sum_{j=1}^N \d_{X^j_t}, X^i_t \big)\approx
	 \frac{1}{\widehat M}\sum_{k=1}^{\widehat M}\nabla_x \Phi(X^i_t, \hat z_k) - \frac1M\sum_{k=1}^M \nabla_x \Phi(X^i_t,  z^k_t)  + \l_1 X^i_t.
$$
In the numerical test, we use the Gaussian random walk Metropolis Hasting algorithm, with the optimal scaling proposed in Gelman, Roberts and Gilks \cite{metropolis_efficient}, to generate $M=2000$ samples of the distribution $\mu^*[\frac1N\sum_{j=1}^N \d_{X^j_t}]$. 
Further, on the basis of the Euler scheme \eqref{eq:Euler}, we shall apply the well-known OBABO splitting procedure for the underdamped Langevin process, see \cite[Chapter 7.3.1]{Molecular15}. 
Along the simulation, we record the potential energy
	\begin{align}\label{eq:numericalpotential}
		\frac1N\sum_{i=1}^N \left( \frac{1}{\widehat M}\sum_{k=1}^{\widehat M} \Phi(X^i_t, \hat z_k) - \frac1M\sum_{k=1}^M \Phi(X^i_t,  z^k_t)  \right),
	\end{align}
    as well as the kinetic one 
    \begin{align}\label{eq:numericalkinetic}
    \frac{\eta}{2}\frac1N\sum_{i=1}^N |V^i_t|^2,
    \end{align}
     and we run $1000$ iterations. 
    As a reference, we also compute the Wasserstein-1 distance between the target distribution and the result of the generator, namely, 
    \[ 
    \cW_1\left(\hat\mu , \mu^*\bigg[\frac1N\sum_{i=1}^N \d_{X^i_t}\bigg] \right)\approx
    	 \cW_1\left(\frac{1}{\widehat M} \sum_{k=1}^{\widehat M}\d_{\widehat z_k}, \frac1M\sum_{k=1}^M \d_{z^k_t} \right).
    \]
We have done the numerical tests for $\sigma_1$ equal to $0.5$ and $1.5$, respectively. 
In the following Figure \ref{fig:GAN}, where we  show the potential energy \eqref{eq:numericalpotential} as well as the sum of the potential energy  \eqref{eq:numericalpotential} and the kinetic one \eqref{eq:numericalkinetic} along the trainings. As a reference we also compute and show the $\cW_1$-distance to the target distribution. Note that throughout the training process, the potential exhibits oscillatory behavior while the sum of the potential and kinetic energy decreases almost monotonically.
\begin{figure}[h]
	\centering
	\includegraphics[width=0.45\textwidth]{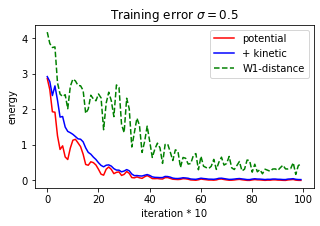}
	\includegraphics[width=0.45\textwidth]{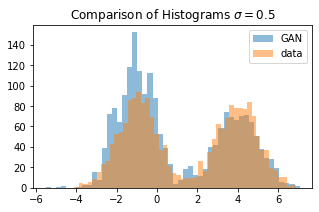}\\
	\includegraphics[width=0.45\textwidth]{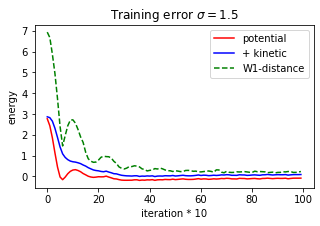}
	\includegraphics[width=0.45\textwidth]{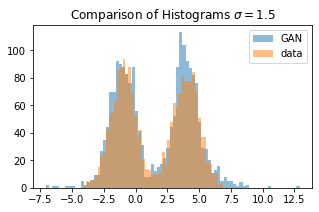}\\
	\includegraphics[width=0.45\textwidth]{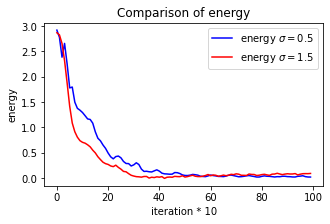}
	\includegraphics[width=0.45\textwidth]{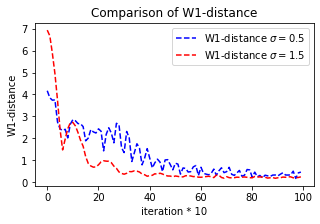}
	\caption{Training errors and GAN samplings} \label{fig:GAN}
\end{figure}
Both the energy and the Wasserstein-1 distance between the target distribution and the generated one become eventually small and their histograms match satisfactorily.
Note that the sum of the potential and the kinetic energy   is not yet monotonously decreasing, because the entropy has not been counted in the total energy.
When comparing the results for different $\sigma_1$, we observe that the energy drops more quickly with bigger $\sigma_1$ but eventually bears a bigger bias. This observation aligns with the contraction bound established in Theorem \ref{thm:Contraction}, which is highlighted in Remark \ref{rem:convergence rate}. 

As a comparison, we also train the GAN using the overdamped MFL dynamics as in \cite{CKR20}. More precisely, we simulate the diffusion following the Euler scheme:
 \begin{align*}
X^i_{t+1} - X^i_t  = -  D_m F\bigg(\frac1N\sum_{j=1}^N \d_{X^j_t}, X^i_t \bigg) \D t + \si_1 \sqrt{\D t} \cN^i_t,\quad  \mbox{for all}~~1\le i \le N, ~~t\in \dbN,
\end{align*}
where again $(\cN^i_t)_{i, t}$ are independent normal Gaussian random variables. We set the same parameters as in \eqref{parametersapp}, and choose 
\begin{align*}
N = 3000, ~~ \D t = 0.005, ~~\sigma_1=0.1.
\end{align*}
In order to  evaluate $D_m F\big(\frac1N\sum_{j=1}^N \d_{X^j_t}, X^i_t \big)$, the samples of $\mu^*[\frac1N\sum_{j=1}^N \d_{X^j_t}]$ are still generated by the same Gaussian random walk Metropolis Hasting algorithm as above. 
We run the simulation of the overdamped MFL dynamics, and compare the result to that of the underdamped MFL dynamics with the volatility $\sigma_1=0.1$  in Figure \ref{fig:GAN_over}. As we can see, in both cases the Wasserstein-1 distances between the target distribution and the generated one evolve in a similar pattern as the potential energies. In the underdamped case they decrease with oscillation, whereas in the overdamped case they exhibit a predominantly monotonically decreasing trend.
At the end of the training, the Wasserstein-1 distances between the target distribution and the generated one become comparably small.
\begin{figure}[h]
	\centering
	\includegraphics[width=0.45\textwidth]{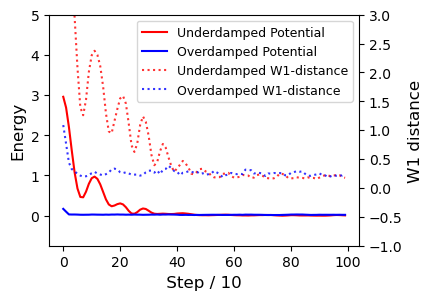}\\
	\includegraphics[width=0.45\textwidth]{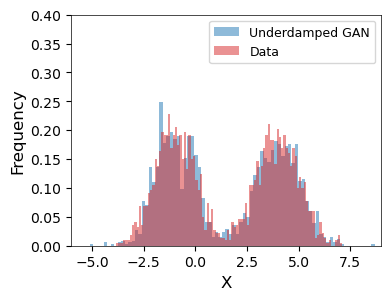}
	\includegraphics[width=0.45\textwidth]{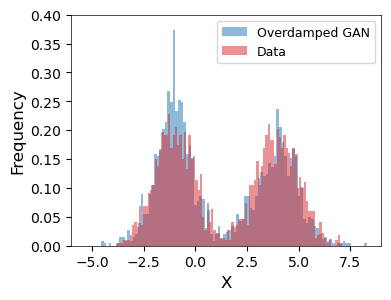}
	\caption{Training errors and GAN samplings - Overdamped MFL} \label{fig:GAN_over}
\end{figure}

\section{Proofs} \label{sec:proof}

	In this section we prove the main results stated in Section \ref{sec:mainresult}. In Section \ref{secproof:prepare} we prepare some preliminary results on the properties of the marginal distributions of the underdamped MFL dynamics, in particular, the integrability result in  Lemma \ref{lemm:Dvm_integ} and the regularity result in Proposition \ref{prop:regularity}. Using them we prove the main Theorem \ref{thm:Lyapunov} (decay of free energy function) in Section \ref{secproof:Lyapunov} and Theorem \ref{thm:ergodicity} (convergence of marginal distributions) in Section \ref{secproof:ergodicity}. Note that the proofs of Theorem \ref{thm:Lyapunov}  and \ref{thm:ergodicity} largely follow the strategy developed in \cite{HRSS19} which investigates similar problems for the overdamped MFL dynamics.   Further, we prove the exponential ergodicity result, Theorem \ref{thm:Contraction}, using the reflection coupling in Section \ref{secproof:exp}.

\subsection{Some fine properties of the marginal distributions of the SDE}\label{secproof:prepare}

	Let $(\Om, \Fc, \dbP)$ be an abstract probability space, equipped with a $n$-dimensional standard Brownian motion $W$.
	Let $T> 0$, $b: [0,T] \x \R^n \x \R^n \longrightarrow \R^n$ be a continuous function such that,  for some constant $C> 0$,
	$$
		|b(t,x,v) - b(t,x', v')| 
		~\le~
		C(|x-x'| + |v-v'|),  
	$$
	for all $(t,x,v,x',v') \in [0,T] \x \R^{2n} \x \R^{2n}$, and $\sigma > 0$ be a positive constant,
	we study the stochastic differential equation (SDE):
	\begin{equation} \label{eq:SDE}
		d X_t = V_t dt,
		~~~~
		d V_t = b(t, X_t, V_t)  dt + \sigma dW_t,
	\end{equation}
	where the initial condition $(X_0, V_0)$ satisfies 
	$$
		\E\big[ |X_0|^2 + |V_0|^2\big] < \infty.
	$$
	
	\begin{rem}
	   	$\mathrm{(i)}$ Under the Lipschtiz condition on the drift function $b$, it is well-known that SDE \eqref{eq:SDE} has a unique strong solution $(X,V)$,
	   	  and the marginal distribution $\rho_t := \Lc(X_t, V_t)$ satisfies (in the sense of distribution) the corresponding Fokker-Planck equation:
	   	  \begin{equation} \label{eq:FP_abstact}
	   	  \pa_t \rho  + v \cd \nabla_x \rho  + \nabla_v\cd \big(b \rho\big)  - \frac{1}{2}  \sigma^2 \Delta_v \rho = 0.
	   	  \end{equation}
	   	$\mathrm{(ii)}$ We will first consider the SDE with general drift function $b$ and deduce some fine properties of the density function $\rho_t(x,v)$ of $\rho_t = \Lc(X_t, V_t)$.
	   	  In a second step, we apply these results to the MFL dynamic \eqref{eq:MFL} whose marginal distribution is denoted by $m_t$.
	\end{rem}

\paragraph{Existence of strict positive and smooth density function}

Let us fix a time horizon $T> 0$.
Let $C([0,T], \R^n)$ be the space of all $\R^n$-valued continuous paths on $[0,T]$.
Denote by $\Omb := C([0,T], \R^n) \x C([0,T], \R^n)$ the canonical space, with canonical process $(\Xb, \Vb) = (\Xb_t, \Vb_t)_{0 \le t \le T}$ and canonical filtration $\Fb = (\Fcb_t)_{t \in [0,T]}$ defined by $\Fcb_t := \sigma(\Xb_s, \Vb_s ~: s \le t)$.
Let $\Pb$ be a (Borel) probability measure on $\Omb$, under which 
	\begin{equation} \label{eq:dXV_P}
		\Xb_t = \Xb_0 + \int_0^t \Vb_s ds, 
		~\mbox{a.s. and}~~
		\big( \sigma^{-1}(\Vb_t-\Vb_0)\big)_{0\leq t\leq T}
		~\mbox{is a Brownian motion},
	\end{equation}
and $\Pb \circ (\Xb_0, \Vb_0)^{-1}  = \dbP \circ (X_0, V_0)^{-1}$.

	\vspace{0.5em}

	Then under the  measure $\Pb$, $(\Xb_0, \Vb_0)$ is independent of $(\Xb_t - \Xb_0 - \Vb_0 t, \Vb_t - \Vb_0)$, and the latter follows a Gaussian distribution with mean value $0$ and $2n \x 2n$ variance matrix
	\begin{equation} \label{eq:VarMatrix}
		\Sigma
		:=
		\sigma^2
		\begin{pmatrix}
		{t^3} I_n /3 &  {t^2} I_n  / {2} \\
		{t^2} I_n  /{2} & t I_n 
		\end{pmatrix}.
	\end{equation}
	Let $\Qb:= \dbP\circ (X, V)^{-1}$ be the image measure of the solution $(X, V)$ to the SDE \eqref{eq:SDE}, so that
	\begin{equation} \label{eq:XVh_dyanmic}
		d \Xb_t = \Vb_t dt,
		~~~~
		d \Vb_t = b(t, \Xb_t, \Vb_t) dt + \sigma d \Wb_t,
		~~\Qb \mbox{--a.s.,}
	\end{equation}
	with a $\Qb$-Brownian motion $\Wb$.

	\vspace{0.5em}

	Similar to the time homogeneous context (i.e. $b(t,x,v)$ is independent of $t$) in Talay \cite{Talay02},
	we provide a result on the existence of  strictly positive (smooth) density function of the marginal distribution of solution $(X,V)$ to  \eqref{eq:SDE}.

\begin{lem} \hspace{0mm} \label{lem:exist_density}
		$\mathrm{(i)}$ The probability measure $\Qb$ is equivalent to $\Pb$ and, with ${\bar b}_s := b(s, \Xb_s, \Vb_s)$,
		\begin{align} \label{eq:DensityZ}
		\frac{d \Qb}{d \Pb} \Big|_{\Fcb_T} =  Z_T, 
		~~\mbox{with}~
		Z_t := \exp \bigg( \int_0^t \sigma^{-2} {\bar b}_s \cdot d \Vb_s - \frac12 \int_0^t  \big|  \sigma^{-1} {\bar b}_s  \big|^2 ds \bigg).
		\end{align}
		
		\noindent $\mathrm{(ii)}$ Consequently, for the solution $(X,V)$ of \eqref{eq:SDE}, it marginal distribution $\Lc(X_t, V_t)$ has strictly positive density function, denoted by $\rho_t(x,v)$ for $t > 0$.
		
		\vspace{2mm}
		
		\noindent $\mathrm{(iii)}$ Assume in addition that $b \in C^{\infty}((0,T) \x \R^{2n})$ with all derivatives of order $k$ bounded for all $k\ge 1$.
		Then the function $(t,x,v)\mapsto \rho_t(x,v)$ belongs to $C^{\infty}((0,T) \x \R^{2n})$.
\end{lem}		

\begin{proof}
	$\mathrm{(i)}.$ Notice that $\Xb_t = \int_0^t \Vb_s ds$, $\Qb$-a.s., we can then apply e.g.~\"Ust\"unel and Zaka\"i  \cite[Theorem 2.4.2]{ustunel2013transformation} to obtain that $\Qb$ and $\Pb$ are equivalent, and that $\frac{d \Qb}{d \Pb} \Big|_{\Fcb_T} = Z_T$.

	\vspace{0.5em}

	\noindent $\mathrm{(ii)}.$ We observe that under $\Pb$, $(\Xb, \Vb)$ can be written as the sum of a square integrable random variable and an independent Gaussian random variable with variance \eqref{eq:VarMatrix}, then $\Pb \circ (\Xb_t, \Vb_t)^{-1}$ has strictly positive and smooth density function.
	Besides, $\Qb$ is equivalent to $\Pb$, with strictly positive density ${d \Qb}/{d \Pb} = Z_T$, it follows that $\dbP \circ (X_t, V_t)^{-1} = \Qb \circ (\Xb_t, \Vb_t)^{-1}$ has also a strictly positive density function.

	\vspace{0.5em}

	\noindent  $\mathrm{(iii)}.$ 
	Under the additional regularity conditions on $b$, it is easy to check that the coefficients of SDE \eqref{eq:SDE} satisfies the H{\"o}rmander's conditions,
	and hence the density function $\rho \in C^{\infty}((0,T) \x \R^{2n})$, see e.g.~Cattiaux and Mesnager \cite[(1.5), P.~2]{CM02} or Bally \cite[Theorem 5.1, Remark 5.2]{bally1991connection}.  
\end{proof}

\paragraph{Estimates on the densities}

We next provide an estimate on $\nabla_v \log \big( \rho_t(x,v) \big)$,
which is crucial for proving Theorem \ref{thm:Lyapunov}. 

\begin{lem}[Moment estimate]\label{lem:moment}
	Suppose that $\E \big[ |X_0|^{2p} + |V_0|^{2p} \big] <\infty$ for some $p \ge 1$, then 
	\begin{align} \label{eq:2ndMoment}
		\E \bigg[ \sup_{0 \le t \le T} \big( |X_t|^{2p} + |V_t|^{2p} \big) \bigg]< \infty.
	\end{align}
	Consequently, the relative entropy between $\Qb$ and $\Pb$ is finite, i.e.
	\begin{align} \label{eq:EntroyFinite}
		H \big(\Qb \big| \Pb \big) 
		:= \E^{\Qb} \bigg[ \log \bigg( \frac{d \Qb}{d \Pb} \bigg) \bigg] 
		= \E \bigg[ \frac12 \int_0^T \big| \sigma^{-1} b(t, X_t, V_t) \big|^2 dt \bigg] < \infty.
	\end{align}
\end{lem}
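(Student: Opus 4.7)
The plan is to rely on the linear growth of the drift, which follows from the Lipschitz assumption on $b$, and to apply a standard It\^o--Gronwall--Burkholder argument for the moment estimate, then use Girsanov together with that estimate for the entropy bound.

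First I would introduce the auxiliary process $Y_t := |X_t|^2 + |V_t|^2$. By It\^o's formula applied to the SDE \eqref{eq:SDE},
\bee
	dY_t \;=\; \bigl( 2 X_t \cdot V_t + 2 V_t \cdot b(t,X_t,V_t) + n \si^2 \bigr) dt + 2 \si V_t \cdot dW_t.
\eee
Using $|b(t,x,v)| \le C_0 (1 + |x| + |v|)$ together with $2|x \cdot v| \le |x|^2 + |v|^2$ and Young's inequality, the drift is bounded by $C_1 (1 + Y_t)$. Then I would apply It\^o's formula a second time to $Y_t^p$ (here I use $p \ge 1$); the second order term produces $\tfrac12 p(p-1) Y_t^{p-2} \cdot 4\si^2 |V_t|^2 \le 2 p(p-1)\si^2 Y_t^{p-1}$, and the first order term gives $p Y_t^{p-1} dY_t$. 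Combining and using $Y_t^{p-1} \le 1 + Y_t^p$, I obtain
\bee
	dY_t^p \;\le\; C_p \bigl( 1 + Y_t^p \bigr) dt + 2 p \si Y_t^{p-1} V_t \cdot dW_t.
\eee
A standard localization argument removes the local martingale after taking expectation, Gronwall's lemma yields $\sup_{t \in [0,T]} \E[Y_t^p] \le C_{p,T} (1 + \E[Y_0^p])$, and then the Burkholder--Davis--Gundy inequality applied to the quadratic variation $\int_0^t 4 p^2 \si^2 Y_s^{2p-2} |V_s|^2 ds \le 4 p^2 \si^2 \int_0^t Y_s^{2p-1} ds$ yields \eqref{eq:2ndMoment}.

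For the relative entropy, I would invoke Girsanov (which is applicable since $Z_T$ is a genuine density under $\Pb$ by Lemma \ref{lem:ZDensity}). Under $\Qb$, the process $\Wb^{\Qb}_t := \si^{-1} \bigl( \Vb_t - \Vb_0 - \int_0^t \bar b_s ds \bigr)$ is a Brownian motion, so that $d \Vb_s = \bar b_s ds + \si d \Wb^{\Qb}_s$. Substituting into the expression for $\log Z_T$ in \eqref{eq:DensityZ} produces
\bee
	\log Z_T \;=\; \frac12 \int_0^T \si^{-2} |\bar b_s|^2 ds + \int_0^T \si^{-1} \bar b_s \cdot d \Wb^{\Qb}_s.
\eee
Since $\Qb = \Pb \circ (X,V)^{-1}$ and $b$ is of linear growth, the moment estimate \eqref{eq:2ndMoment} (with $p=1$) applied to the law of $(X,V)$ yields $\E^{\Qb}\bigl[ \int_0^T \si^{-2} |\bar b_s|^2 ds \bigr] < \infty$, which both makes the stochastic integral a true $\Qb$-martingale with zero expectation and bounds the first term. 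Taking $\Qb$-expectation concludes \eqref{eq:EntroyFinite}.

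The main subtlety is not the Gronwall/BDG machinery, which is routine, but the justification of the martingale property of the stochastic integral against $\Wb^{\Qb}$ in the entropy computation; this is where the moment estimate is genuinely used, and it is the reason the two parts of the lemma are stated together.
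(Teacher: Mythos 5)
Your proof is correct and follows essentially the same route as the paper: the standard It\^o--Gronwall--BDG argument for the moment bound, and then the density representation \eqref{eq:DensityZ} rewritten under $\Qb$ (via Girsanov) together with the $p=1$ moment estimate for the entropy identity. The paper states these steps more tersely, but your explicit justification of the zero-mean stochastic integral under $\Qb$ is exactly the detail being invoked there.
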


\begin{proof}
	Let us first consider \eqref{eq:2ndMoment} under the condition $\E \big[ |X_0|^{2} + |V_0|^{2} \big] <\infty$.
	As $b$ is of linear growth in $(x,v)$,
	it is standard to apply It\^o formula on $|X_t|^{2p} + |V_t|^{2p}$, and use BDG inequality and then Gr\"ownwall lemma to obtain \eqref{eq:2ndMoment}.
	
	\vspace{0.5em}

	Next, by \eqref{eq:DensityZ}, one has
	$$
		H \big(\Qb \big| \Pb \big) 
		=
		\E^{\Qb} \bigg[ \frac12 \int_0^T \big| \sigma^{-1} b(t, \Xb_t, \Vb_t) \big|^2 dt \bigg].
	$$
	Since $\Qb = \mathbb{P} \circ (X, V)^{-1}$ (see \eqref{eq:SDE} and \eqref{eq:XVh_dyanmic}), then
	it follows by the linear growth of $b$ together with \eqref{eq:2ndMoment} that
	$$
		H \big(\Qb \big| \Pb \big) 
		=
		\E \bigg[ \frac12 \int_0^T \big| \sigma^{-1} b(t, X_t, V_t) \big|^2 dt \bigg] 
		<
		\infty.
	$$
\end{proof}

\vspace{0.5em}

Let us introduce the time reverse process $(\Xt, \Vt)$ and time reverse probability measures $\Pt$ and $\Qt$ on the canonical space $\Omb$ by
$$
\Xt_t := \Xb_{T-t}, ~ \Vt_t := \Vb_{T-t},~t \in [0,T],
~~\mbox{and}~~
\Pt := \Pb \circ (\Xt, \Vt)^{-1},
~~
\Qt := \Qb \circ (\Xt, \Vt)^{-1}.
$$

\begin{lem} \label{lemm:Dvm_integ}
	The density function $\rho_t(x,v)$ is absolutely continuous in $v$, and it holds that
	\begin{equation} \label{eq:DvRho_integ}
	\E \bigg[\int_{t}^T \big|\nabla_v \log \big( \rho_s (X_s, V_s) \big) \big|^2 ds \bigg] 
	<\infty,
	\quad \mbox{for all} \quad t > 0.
	\end{equation}
\end{lem}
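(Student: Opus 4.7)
The plan is to obtain the Fisher information bound via a De Bruijn-type entropy dissipation identity for the kinetic Fokker--Planck equation \eqref{eq:FP_abstact}. Regarding the absolute-continuity claim: for $t>0$, $\rho_t$ is in fact smooth and strictly positive. Positivity is given by Lemma \ref{lem:exist_density}; smoothness follows from H\"ormander's hypoelliptic regularity applied to the generator $v\cdot\nabla_x+b\cdot\nabla_v+\tfrac{\sigma^2}{2}\Delta_v$, whose diffusion vector fields $\partial_{v_i}$ together with the brackets $[\partial_{v_i},\,v\cdot\nabla_x]=\partial_{x_i}$ span the tangent space. In particular $\nabla_v\ln\rho_t$ is a pointwise well-defined smooth function on $\R^{2n}$.

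I would then multiply \eqref{eq:FP_abstact} by $\ln\rho_t$ and integrate by parts on $\R^{2n}$. The transport contribution $\int v\cdot\nabla_x\rho_t\,\ln\rho_t\,dxdv=\int v\cdot\nabla_x(\rho_t\ln\rho_t-\rho_t)\,dxdv$ vanishes (divergence in $x$ with decay at infinity), the drift term produces $\E[b(t,X_t,V_t)\cdot\nabla_v\ln\rho_t(X_t,V_t)]$, and the Laplacian term produces $-\tfrac{\sigma^2}{2}\E[|\nabla_v\ln\rho_t(X_t,V_t)|^2]$. After Young's inequality $|b\cdot\nabla_v\ln\rho_t|\le\sigma^{-2}|b|^2+\tfrac{\sigma^2}{4}|\nabla_v\ln\rho_t|^2$ and integrating in time from $t$ to $T$ one arrives at
\begin{equation*}
\tfrac{\sigma^2}{4}\,\E\int_t^T|\nabla_v\ln\rho_s(X_s,V_s)|^2\,ds \;\le\; H(m_t)-H(m_T)+\sigma^{-2}\,\E\int_t^T|b(s,X_s,V_s)|^2\,ds.
\end{equation*}

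It then suffices to show each term on the right is finite. The $|b|^2$-integral is finite by \eqref{eq:EntroyFinite}. For $H(m_T)>-\infty$ I would use Gibbs' inequality against a standard Gaussian on $\R^{2n}$, which yields $H(m_T)\ge -n\ln(2\pi)-\tfrac{1}{2}\E[|X_T|^2+|V_T|^2]$, finite by \eqref{eq:2ndMoment}. For $H(m_t)<\infty$ I would split $H(m_t)=\int\rho_t\ln(\rho_t/\bar\rho_t)\,dxdv+\int\rho_t\ln\bar\rho_t\,dxdv$, where $\bar\rho_t$ denotes the density of $(\Xb_t,\Vb_t)$ under $\Pb$: the relative-entropy term is bounded above by $H(\Qb\,|\,\Pb)<\infty$ (Lemma \ref{lem:moment}) thanks to the data-processing inequality, and the second is bounded by $\sup\ln\bar\rho_t$, which is finite because $\bar\rho_t$ is the convolution of the law of $(X_0,V_0)$ against the non-degenerate $\R^{2n}$-Gaussian with covariance \eqref{eq:VarMatrix} and is therefore pointwise majorized by the peak value of that Gaussian kernel.

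The main obstacle is making this entropy computation rigorous, since $\ln\rho_t$ is unbounded and the integrations by parts do not come for free. The standard remedy is to regularize, e.g.\ replace $\ln\rho_t$ by $\ln((\rho_t+\varepsilon)\wedge M)$ or convolve $\rho_t$ with a mollifier, derive the above inequality for the approximation where every manipulation is elementary, and then pass to the limit using monotone/dominated convergence together with the a priori moment and entropy bounds just discussed. An alternative route, better aligned with the time-reversed measures $(\Pt,\Qt)$ introduced above, is to apply Girsanov between $\Qt$ and $\Pt$ on $[0,T-t]$ using the Haussmann--Pardoux drift formula for the reversed process and the identity $H(\Qt\,|\,\Pt)=H(\Qb\,|\,\Pb)$; this is slightly more delicate because Haussmann--Pardoux itself presumes local $L^2$ control of $\nabla_v\ln\rho$, so must be coupled with a localization argument.
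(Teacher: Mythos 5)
Your strategy (a De Bruijn/entropy-dissipation inequality for the kinetic Fokker--Planck equation, combined with $H(m_t)<\infty$ via the decomposition against the reference density $\bar\rho_t$ of $(\Xb_t,\Vb_t)$ under $\Pb$ and $H(m_T)>-\infty$ via Gibbs' inequality) is genuinely different from the paper's argument. The paper instead time-reverses the process: following F\"ollmer and Haussmann--Pardoux, it shows the reversed velocity process is an It\^o process under $\Qt$ with an $L^2(\Qt\times dt)$ drift $\tilde b$ (obtained from the explicit Gaussian reversal drift under $\Pb$ plus a Girsanov correction controlled by $H(\Qt|\Pt)=H(\Qb|\Pb)<\infty$), and then derives the weak-derivative identity $\nabla_v\rho_t(x,v)=\rho_t(x,v)\,\E^{\Qb}[\tilde b_{T-t}\circ R+\bar b_t\,|\,\Xb_t=x,\Vb_t=v]$, from which \eqref{eq:DvRho_integ} follows by Jensen. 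Your entropy-finiteness arguments are correct and essentially reproduce the ingredients the paper uses ($H(\Qb|\Pb)<\infty$ and the moment bounds), and note that the paper later proves the exact identity your inequality approximates (see \eqref{eq:dH_t}) --- but only \emph{after} Lemma \ref{lemm:Dvm_integ} is available to justify the computation.

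There are two genuine gaps. First, your pointwise manipulation of \eqref{eq:FP_abstact} presumes $\rho_t$ is smooth, which you justify by H\"ormander's theorem; but Lemma \ref{lemm:Dvm_integ} is stated under the standing assumption of this subsection that $b$ is merely continuous and Lipschitz in $(x,v)$, and H\"ormander's hypoellipticity requires smooth coefficients (that is precisely why the paper separates the smoothness statement into Lemma \ref{lem:regularity} with the additional hypothesis $b\in C^\infty$, and why its proof of the present lemma works only with weak derivatives and a conditional-expectation representation). Second, the regularization needed to legitimize the integrations by parts is named but not executed, and for this degenerate equation it is where all the difficulty lies: truncating $\ln\rho$ handles the singularity of the logarithm, but the boundary terms at spatial infinity (from $\nabla_x\cdot(v(\rho\ln\rho-\rho))$, from $\nabla_v\cdot(b\rho\ln\rho)$ with $b$ of linear growth, and from $\nabla_v\rho\,\ln\rho$ in the Laplacian term) require pointwise Gaussian-type decay of $\rho_t$ and $\nabla_v\rho_t$, or an integrable majorant for $\int|\nabla_v\rho_t|$ --- and the latter is controlled by the very Fisher information you are trying to bound, so the limiting argument must be set up with care to avoid circularity. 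As written, the proposal is a plausible alternative blueprint rather than a complete proof; the paper's probabilistic route is designed exactly to bypass these regularity and decay issues.
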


\begin{proof}
This proof is largely based on the time-reversal argument in F\"ollmer \cite[Lemma 3.1 and Theorem 3.10]{Follmer}, where the author sought a similar estimate for a non-degenerate diffusion. 
For simplicity of notations, let us assume  $\sigma = 1$.
	
	\vspace{0.5em}
	
	\noindent {\it Step 1.}\quad We first prove that, $(\Xb, \Vb)$ is an It\^o process under $\Qt$, 
	and there exists a $\Fb$-predictable process $\tilde b = (\tilde b_s)_{s \in [0, T)}$ such that, with a $(\Fb, \Qt)$-Brownian motion $\Wt$,
	\begin{equation} \label{eq:bt2_integ}
		\E^{\Qt} \bigg[ \int_0^t \big| \tilde b_s \big|^2 ds \bigg] < \infty,
		~~\mbox{and}~~
		\Vb_t  = \Vb_0 + \int_0^t \tilde b_s ds + \Wt_t,
		~~\mbox{for all}~t \in [0, T).
	\end{equation}
	
	\vspace{0.5em}

	Let $(\Pb_{x_0,v_0})_{(x_0, v_0) \in \R^n \x \R^n}$ be a family of regular conditional probability distribution (r.c.p.d.) of $\Pb$ knowing $\sigma(X_0, V_0)$,
	such that $\Pb_{x_0, v_0} \big[ \Xb_0 = x_0, \Vb_0 = v_0 \big] = 1$ and Conditions \eqref{eq:dXV_P} holds still true under $\Pb_{x_0, v_0}$ for every $(x_0, v_0) \in \R^n \x \R^n$.
	Let 
	$$
		\Pt_{x_0,v_0} := \Pb_{x_0,v_0} \circ (\Xt, \Vt)^{-1}.
	$$
	Recall the dynamic of $(\Xb, \Vb)$ under $\Pb$ in \eqref{eq:dXV_P} and notice that the marginal distribution of $(\Xb_t, \Vb_t)$ under $\Pb_{x_0, v_0}$ is Gaussian with density function 
	$$
		\rho^{x_0, v_0}_t(x,v)
		=
		\frac{1}{\sqrt{(2\pi)^{2n} |\Sigma|}}
		\exp\bigg( 
			-\frac12  \begin{pmatrix} x - (x_0 + v_0 t) \\ v- v_0 \end{pmatrix}^{\top} \Sigma^{-1}  \begin{pmatrix} x - (x_0 + v_0 t) \\ v- v_0 \end{pmatrix}
		\bigg),
	$$
	where $\Sigma$ is defined by \eqref{eq:VarMatrix}, so that
	$$
		\Sigma^{-1} 
		= 
		2 \sigma^{-2}
		\begin{pmatrix}
		6 t^{-3} I_n   &  - 3 {t^{-2}} I_n   \\
		- 3 {t^{-2}} I_n  & 2 {t^{-1}} I_n
		\end{pmatrix}.
	$$
	By direct computation, one obtains
	$$
		\tilde c_s(x_0, v_0, x, v)
		:=
		\nabla_v \log \big( \rho^{x_0, v_0}_{T-s} (x,v) \big) 
		=
		\frac{6 ( x_0 + (T- s) v_0 - x)}{(T-s)^2} + \frac{ 4 (v_0 - v)}{T-s},
		~~s \in[0, T).
	$$
	It follows from Theorem 2.1 of Haussmann and Pardoux \cite{haussmann1986time} (or Theorem 2.3 of Millet, Nualart and Sanz \cite{millet1989integration}) that $\Vb$ is still a diffusion process w.r.t. $(\Fb, \Pt_{x_0, v_0})$, and
	$$
		\Vb_t  - \Vb_0 - \int_0^t \nabla_v \log \big( \rho^{x_0, v_0}_{T-s}  (\Xb_s, \Vb_s) \big) ds
		~~\mbox{is a}~~(\Fb, \Pt_{x_0, v_0})\mbox{-Brownian motion on}~[0,T).
	$$
	Notice that, by its definition, $(\Pt_{x_0, v_0})_{(x_0, v_0) \in \R^n \x \R^n}$ is a family of conditional probability of $\Pt$ knowing $(\Xb_T, \Vb_T)$,
	or equivalently $\Pt_{x_0, v_0}[\cdot] = \Pt \big[ \cdot \big| (\Xb_T, \Vb_T) = (x_0, v_0) \big]$,
	it follows that
	$$
	\Wt^1_t := \Vb_t - \Vb_0 - \int_0^t \tilde c_s(\Xb_T, \Vb_T, \Xb_s, \Vb_s) ds
	\quad \mbox{is a}~ 
	(\Fb^*, \Pt)
	\mbox{-Brownian motion on}~[0,T),
	$$
	where  the enlarged filtration $\Fb^* = (\Fcb^*_t)_{0 \le t \le T}$ is defined by
	$$
	\Fcb^*_t := \sigma \big(\Xb_T, \Vb_T, \Xb_s, \Vb_s ~: s \in [0,t] \big).
	$$
	By the moment estimate \eqref{eq:2ndMoment}, we have
	$$
	\E^{\Qt} \bigg[ \int_{0}^{t} |\tilde c_s(\Xb_T, \Vb_T, \Xb_s, \Vb_s)|^2 ds \bigg] 
	=
	\E^{\Qb} \bigg[ \int_{T-t}^{T} |\tilde c_s(\Xb_0, \Vb_0, \Xb_s, \Vb_s)|^2 ds \bigg]  
	< \infty, ~~\forall t \in [0,T).
	$$
	Next notice that the relative entropy satisfies
	$$
	H\big(\Qt | \Pt\big) = H(\Qb | \Pb) < \infty.
	$$
	Therefore, there exists a $\Fb^*$-predictable process $\tilde a$ such that 
	$$
		\E^{\Qt} \left[ \int_0^T |\tilde a_t|^2 dt \right] 
		=
		H(\Qt | \Pt)
		<
		\infty,
	$$
	and 
	\begin{align*}
		\Wt^2_t := \Wt^1_t - \int_0^t \tilde a_s ds 
		= \Vb_t -\Vb_0 - \int_0^t \ \big( \tilde a_s + \tilde c_s(\Xb_T, \Vb_T, \Xb_s, \Vb_s) \big)ds,
		~~t  \in [0,T),
	\end{align*}
	is a $(\Fb^*, \Qt)$-Brownian motion.
	Finally, by letting $(\tilde b_s)_{s \in [0,T)}$ be the predictable projection of the process $ \big( \tilde a_s + \tilde c_s (\Xb_T, \Vb_T, \Xb_s, \Vb_s) \big)_{s \in [0,T)}$ w.r.t. $(\Fb, \Qt)$,
	we concludes the proof of Claim \eqref{eq:bt2_integ}.
\vspace{4mm}
	
\noindent {\it Step 2.} \quad		
Let $R: \Omb \longrightarrow \Omb$ be the reverse operator defined by $R(\omb) = (\omb_{T-t})_{0 \le t \le T}$.
Then for every fixed $t < T$ and $\varphi \in C_c(\R^{2n})$, one has
	$$
		\E^{\Qb} \Big[ \big( \bt_{T-t} \circ R \big) \varphi( \Xb_t, \Vb_t) \Big]
		~=~
		- \lim_{h \searrow 0} \frac1h \E^{\Qb} \Big[ \big( \Vb_t - \Vb_{t-h} \big) \varphi \big( \Xb_t, \Vb_t \big)  \Big].
	$$
	Recall the dynamic of $(\Xb, \Vb)$ under $\Qb$ in \eqref{eq:XVh_dyanmic}, and thus
	\begin{align*}
		\varphi \big( \Xb_t, \Vb_t \big) 
		&= \varphi \big( \Xb_{t-h}, \Vb_{t-h} \big) + \int_{t-h}^t \nabla_x \varphi(\Xb_s, \Vb_s) \cdot \Vb_s ds \\
		&\quad + \int_{t-h}^t \nabla_v \varphi(\Xb_s, \Vb_s) \cdot d \Vb_s ~+~ \frac12 \int_{t-h}^t  \Delta_v \varphi(\Xb_s, \Vb_s) ds,
		\quad \Qb\mbox{-a.s.}
	\end{align*}
	Therefore,
	\begin{eqnarray*}
		&& \E^{\Qb} \Big[ \big( \Vb_t - \Vb_{t-h} \big) \varphi ( \Xb_t, \Vb_t )  \Big] \\
		&=& 
		\E^{\Qb} \Big[ \big( \Vb_t - \Vb_{t-h} \big) \varphi ( \Xb_{t-h}, \Vb_{t-h} )  \Big]
		~+~
		\E^{\Qb} \Big[ \big( \Vb_t - \Vb_{t-h} \big) \Big( \varphi ( \Xb_t, \Vb_t ) -  \varphi ( \Xb_{t-h}, \Vb_{t-h}) \Big)  \Big] \\
		&=&
		\E^{\Qb} \left[  \varphi ( \Xb_{t-h}, \Vb_{t-h} ) \int_{t-h}^t b(s, \Xb_s, \Vb_s) ds \right]
		~+~
		\E^{\Qb} \left[ \int_{t-h}^t \nabla_v \varphi( \Xb_s, \Vb_s) ds \right].
	\end{eqnarray*}
	Denoting
	\begin{equation} \label{eq:bh2_integ}
	\bar b_t := b(t, \Xb_t, \Vb_t),
	~\mbox{which clearly satisfies that }~ \E^{\Qb} \bigg[ \int_0^T \big|{\bar b}_t\big|^2 dt  \bigg] < \infty,
	\end{equation}
	it follows that 
	\begin{align*}
		\E^{\Qb} \Big[ \big( \bt_{T-t} \circ R \big) \varphi( \Xb_t, \Vb_t) \Big]
		=
		- \E^{\Qb} \Big[ {\bar b}_t \varphi( \Xb_t, \Vb_t) \Big] - \E^{\Qb} \Big[ \nabla_v \varphi( \Xb_t, \Vb_t) \Big].
	\end{align*}
Therefore, denoting by $\nabla_v \rho_t(x,v)$ the weak derivative of $\rho$ in the sense of distribution, one has
	\begin{align*}
		\int_{\R^{2n}} \nabla_v \rho_t(x,v) \varphi(x,v) dx dv
		 &= - \E^{\Qb} \Big[ \nabla_v \varphi( \Xb_s, \Vb_s) \Big] \\
		 &= \E^{\Qb} \Big[ \big( \bt_{T-t} \circ R + {\bar b}_t \big) \varphi( \Xb_t, \Vb_t) \Big].
	\end{align*}
As $\varphi \in C_c(\R^{2n})$ is arbitrary, this implies that, for a.e.~$(x,v)$,
	\begin{align*}
		\nabla_v \rho_t(x,v)
		&= \rho_t (x,v)\E^{\Qb} \Big[  \big( \bt_{T-t} \circ R + {\bar b}_t \big) \Big| \Xb_t = x, \Vb_t = v \Big].
	\end{align*}
Finally, it follows from  the moment estimates in \eqref{eq:bt2_integ} and \eqref{eq:bh2_integ} that
    \begin{align*}
    	\E^{\Qb} \bigg[ \int_{t_0}^{t_1} \big| \nabla_v  \log \big( \rho_t (\Xb_t, \Vb_t) \big) \big|^2 dt \bigg] 
    	= \E^{\Qb} \bigg[ \int_{t_0}^{t_1} \Big| \frac{\nabla_v \rho_t}{\rho_t}(\Xb_t, \Vb_t) \Big|^2 dt \bigg] 
    	< \infty.
    \end{align*}
We hence conclude the proof by the fact that $\dbP \circ (X,V)^{-1} = \Qb \circ (\Xb, \Vb)^{-1}$.
\end{proof}

	\vspace{0.5em}

From \eqref{eq:bt2_integ}, we already know that $\Vb$ is a diffusion process w.r.t. $(\Fb, \Qt)$.
With the integrability result \eqref{eq:DvRho_integ}, we can say more on its dynamics.

\begin{lem} \label{lemm:ReverseSDE}
	The reverse process $(\Xt, \Vt)$ is a diffusion process under $\Qb$,
	or equivalently, the canonical process $(\Xb, \Vb)$ is a diffusion process under the reverse probability $\Qt$.
	Moreover, $\Qt$ is a weak solution to the SDE:
	\begin{equation} \label{eq:ReverseSDE}
	  \begin{aligned}
	    d \Xb_t &= - \Vb_t dt, \\
	    d \Vb_t &= \big( -b(t,  \Xb_t, \Vb_t) +\sigma^2 \nabla_v \log \big( \rho_{T-t}( \Xb_t,  \Vb_t) \big) \big) dt + \sigma d \Wt_t, \quad \Qt \mbox{--a.s.},
	  \end{aligned}
	\end{equation}
	where $\Wt$ is a $(\Fb, \Qt)$--Brownian motion.	
\end{lem}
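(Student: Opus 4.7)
The plan is to combine the two technical ingredients already established in Lemma \ref{lemm:Dvm_integ} with the Markov property of the time-reversed diffusion.

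\textbf{Position equation.} This is immediate. Under $\Qb$, the pathwise identity $\Xb_t = \Xb_0 + \int_0^t \Vb_s \, ds$ holds. Changing variable $u = T - s$ on the reversed path $(\Xt, \Vt) = (\Xb_{T-\cdot}, \Vb_{T-\cdot})$ yields $\Xt_t = \Xt_0 - \int_0^t \Vt_u \, du$, $\Qb$-a.s., which transports to $d\Xb_t = -\Vb_t \, dt$ under $\Qt = \Qb \circ (\Xt, \Vt)^{-1}$.

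\textbf{Velocity equation.} Step 1 of the proof of Lemma \ref{lemm:Dvm_integ} already provides the semimartingale decomposition
\begin{equation*}
\Vb_t = \Vb_0 + \int_0^t \bt_s \, ds + \si \Wt_t
\end{equation*}
of the canonical process under $\Qt$, with a $(\Fb, \Qt)$-Brownian motion $\Wt$ and a predictable drift $\bt$ that is square-integrable on $[0, T - t_0]$ for every $t_0 > 0$. It thus remains to identify $\bt_s$. The identity proved in Step 2 of the same lemma, written here with the $\si^2$ scaling appropriate for general $\si$,
\begin{equation*}
\si^2 \nabla_v \ln \rho_t(\Xb_t, \Vb_t) = \E^{\Qb}\bigl[\bt_{T-t} \circ R + \bar b_t \bigm| \Xb_t, \Vb_t\bigr],
\end{equation*}
when rewritten in reverse time $s = T - t$ via $\Qt = \Qb \circ R^{-1}$ and using $\bar b_t = b(t, \Xb_t, \Vb_t)$, becomes
\begin{equation*}
\E^{\Qt}\bigl[\bt_s \bigm| \Xb_s, \Vb_s\bigr] = \si^2 \nabla_v \ln \rho_{T-s}(\Xb_s, \Vb_s) - b(T-s, \Xb_s, \Vb_s).
\end{equation*}

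\textbf{Main obstacle and conclusion.} The remaining obstacle is to lift this conditional-expectation identification to a $\Qt$-a.s. one. I would invoke the Markov property of $(\Xb, \Vb)$ under $\Qt$, which is inherited from the Markov property of the forward strong solution $(X, V)$ under $\P$: the time reversal of a Markov diffusion with smooth positive transition densities (established in Lemma \ref{lem:exist_density}) is again Markov. Under this Markov property, the drift in the semimartingale decomposition of $\Vb$ must be a measurable function of $(\Xb_s, \Vb_s)$ alone, and therefore coincides $\Qt$-a.s. with the state-dependent right-hand side above. Substituting into the semimartingale decomposition identifies $\Qt$ as a weak solution of \eqref{eq:ReverseSDE}.
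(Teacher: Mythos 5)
Your argument is essentially correct, but it takes a genuinely different route from the paper. The paper's proof is a two-line verification-and-citation: it uses Cauchy--Schwarz together with \eqref{eq:DvRho_integ} to check the integrability condition $\int_t^T\int_{\R^{2n}}|\nabla_v\rho_s(x,v)|\,dx\,dv<\infty$, and then invokes Theorem 2.1 of Haussmann--Pardoux (or Theorem 2.3 of Millet--Nualart--Sanz), which delivers both the diffusion property of the reversed process and the explicit form of its drift in one stroke. You instead reassemble the conclusion from the internal ingredients of Lemma \ref{lemm:Dvm_integ}: the abstract semimartingale decomposition of $\Vb$ under $\Qt$ from Step 1, the identity $\nabla_v\rho_t=\rho_t\,\E^{\Qb}[\bt_{T-t}\circ R+\bar b_t\,|\,\Xb_t,\Vb_t]$ from Step 2 (your reverse-time rewriting of it is correct), and the Markov property of the time reversal to upgrade the conditional-expectation identification to a $\Qt$-a.s.\ one. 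The advantage of your route is that it makes transparent where the score term $\sigma^2\nabla_v\ln\rho_{T-t}$ comes from; the cost is that your pivotal step --- ``a Markov semimartingale's drift with respect to the full past filtration is a.e.\ a function of the current state'' --- is asserted rather than proved, and it is precisely the nontrivial content of the theorems the paper cites. It is true, and can be justified by writing $\tilde b_s$ as an a.e.\ limit of $h^{-1}\E^{\Qt}[\Vb_{s+h}-\Vb_s\,|\,\Fcb_s]$ and using the Markov property to see each such increment ratio is $\sigma(\Xb_s,\Vb_s)$-measurable; but as written you are in effect re-deriving the cited time-reversal theorem, so you should either supply that limit argument or simply cite the theorem as the paper does.
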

		
\begin{proof}
	It follows from the Cauchy-Schwarz inequality and \eqref{eq:DvRho_integ} that
	\begin{align*}
		\int_t^T \int_{\R^{2n}} |\nabla_v \rho_s(x,v)| dxdv 
		\le 
		\left(\int_t^T \int_{\R^{2n}} \frac{|\nabla_v \rho_s(x,v)|^2}{\rho_s(x,v)^2}\rho_s(x,v) dxdv \right)^\frac12<\infty,
	\end{align*}
	for all $T>t>0$.
	Together with the Lipschitz assumption on the coefficient $b(t,x,v)$,
	the desired result is a direct consequence of Haussmann and Pardoux \cite[Theorem 2.1]{haussmann1986time}, or Millet, Nualart and Sanz \cite[Theorem 2.3]{millet1989integration}.
\end{proof}

\paragraph{Application to the MFL equation \eqref{eq:MFL}}

	We will apply the above technical results to the MFL equation  \eqref{eq:MFL}.
	First, we know that the MFL SDE \eqref{eq:MFL} has a unique strong solution $(X, V)$.
	We take $m^X_t := \Lc(X_t)$ as an input and define 
	\begin{align} \label{eq:def_b_MFL}
		b(t,x,v) 
		~:=~ 
		D_m F(m^X_t, x) + \gamma v 
		= 
		D_m F_{\circ} (m^X_t, x) + \nabla_x f(x) +  \gamma v.
	\end{align}
	Then $(X, V)$ is also the unique solution of SDE \eqref{eq:SDE} with drift function $b$ defined above.

	\begin{prop}  \hspace{1mm} \label{prop:regularity}
		$\mathrm{(i)}$ Let Assumption \ref{assum:conv_initial}.$\mathrm{(i)}$ hold true.
		Then the function $b(t,x,v)$ defined by \eqref{eq:def_b_MFL} is a continuous function, uniformly Lipschitz in $(x,v)$.
		
		\vspace{0.5em}
		
		\noindent $\mathrm{(ii)}$ Suppose in addition that Assumption \ref{assum:conv_initial}.$\mathrm{(ii)}$ holds true.
		Then $b \in C^{\infty}((0,\infty) \x \R^n \x \R^n)$ and all derivatives of order $k$, for each $k \ge 1$,  are bounded on $(0,T] \x \R^n \x \R^n$ for any $T>0$.
	\end{prop}

\begin{proof}
	$\mathrm{(i)}.$ For a diffusion process $(X, V)$, it is clear that $t \mapsto m^X_t := \Lc(X_t)$ is continuous under the weak convergence topology,
	then $(t,x,v) \mapsto b(t,x,v) := D_mF(m^X_t, x) + \gamma v$ is continuous.
	Moreover, it is clear that $b$ is globally Lipschitz in $(x,v)$ under Assumption \ref{assum:conv_initial}.$\mathrm{(i)}$.
	
	\vspace{0.5em}
	
	\noindent $\mathrm{(ii)}.$ 
	Let us denote
	$$
		b_{\circ}(t_0, x_0) := D_m F (m^X_{t_0}, x_0),
		~~\mbox{so that}~~ 
		b(t_0,x_0,v_0) = b_{\circ}(t_0, x_0)  +  \gamma v_0.
	$$
	Then it is enough to check the differentiability of $b_{\circ}$.
	We claim that, for all $k \ge 1$, one has
	\begin{align} \label{eq:claim_Db}
		\partial^k_t b_{\circ} (t_0,x_0) 
		=
		\E \bigg[ \sum_{i=0}^k \sum_{j=0}^{k-i} \varphi^k_{i,j} \big(m^X_{t_0}, X_{t_0}, V_{t_0}, x_0 \big) X^i_{t_0} V^j_{t_0} \bigg],
	\end{align}
	where $\varphi^k_{i,j}(m^X, x, v, x_0)$ are bounded functions.

	Further, it follows by Lemma \ref{lem:moment} that, 
	under additional conditions in Assumption \ref{assum:conv_initial}.$\mathrm{(ii)}$,
	one has $\E\big[\sup_{0 \le t \le T} (|X_t|^p + |V_t|^p) \big] < \infty$ for all $T> 0$ and $p \ge 1$.
	By the dominated convergence theorem, one has $b_{\circ} \in C^{\infty}((0, \infty) \x \R^n)$ and hence $b \in C^{\infty}((0,\infty) \x \R^n \x \R^n)$, and in particular  all its derivatives of order $k$, for each $k \ge 1$,  are bounded on $(0,T] \x \R^n \x \R^n$ for any $T>0$.

	\vspace{0.5em}
	
	Then it is enough to prove \eqref{eq:claim_Db}.	
	Recall (see e.g.~Carmona and Delarue \cite[Proposition 5.102]{CarmonaDelarueMFGBook1}) that for a smooth function $\varphi: \Pc_2(\R^n) \x \R^n \x \R^n \longrightarrow \R$, 
	one has the It\^o's formula
	\begin{align} \label{eq:Ito_measure}
		d \varphi (m^X_t, X_t, V_t) 
		&= \int_{\R^{2n}} D_m \varphi ( m^X_t, x, X_t, V_t) \cdot v ~m_t(dx, dv) dt + \nabla_x \varphi(m^X_t, X_t, V_t) \cdot V_t dt  \nonumber \\
		&\quad - \nabla_v \varphi(m^X_t, X_t, V_t) \big( D_mF (m^X_t, X_t)  + \gamma V_t \big) dt \nonumber \\
		&\quad + \frac12 \sigma^2 \Delta_v \varphi(m^X_t, X_t, V_t) dt 
			+ \nabla_v \varphi(m^X_t, X_t, V_t) \cdot \sigma dW_t.
	\end{align}
	
	First, for fixed $(t_0, x_0) \in \R_+ \x \R^n$, we set $\varphi^0(m^X, x_0) := D_m F (m^X, x_0)$.
	Recall that the derivative $D^k_m F$ of any order $k \ge 2$ is bounded, 
	then the derivative $D^k_m \varphi^0$ of any order $k\ge 1$ is bounded.

	\vspace{0.5em}
	
	Applying It\^o formula \eqref{eq:Ito_measure} on $\varphi^0(m^X_t, x_0)$, we obtain that
	$$
		\partial_t b_{\circ}(t_0, x_0) 
		=
		\frac{d \varphi^0(m^X_t, x_0)}{d t} \Big|_{t = t_0} 
		= 
		\E \big[ D_m \varphi^0 (m^X_{t_0}, X_{t_0}, x_0) \cdot V_{t_0}  \big] 
		=
		\E \big[ \varphi^1(m^X_{t_0}, X_{t_0}, V_{t_0}, x_0) \big],
	$$
	with 
	$$
		\varphi^1(m^X, x, v, x_0) := D_m \varphi^0 (m^X, x, x_0) \cdot  v,
	$$
	so that Claim \eqref{eq:claim_Db} is true for $k=1$.
	
	\vspace{0.5em}	
	
	Next, applying It\^o formula on $\varphi^1(m^X_t, X_t, V_t, x_0)$,  we obtain that
	$$
		\partial^2_t b_{\circ}(t_0, x_0) 
		=
		\frac{d}{dt} \E \big[ \varphi^1(m^X_t, X_t, V_t, x_0) \big] \Big|_{t= t_0}
		=
		\E \big[ \varphi^2(m^X_{t_0}, X_{t_0}, V_{t_0}, x_0) \big],
	$$
	with
	\begin{align*}
		\varphi^2(m^X, x, v, x_0) 
		&:=
		D_m \varphi^1(m^X, x,v, x_0) \cdot v 
		+
		\nabla_x \varphi^1(m^X, x, v) \cdot v \\
		&\hspace{6mm} - 
		\nabla_v \varphi^1(m^X, x,v) \cdot \big( D_mF(m^X, x) + \gamma v \big)
		+
		\frac12 \sigma^2 \Delta_v \varphi^1(m^X, x, v),
	\end{align*}
	so that Claim \eqref{eq:claim_Db} is true for $k=2$.
	
	\vspace{0.5em}	
	
	By repeating the same arguments with induction, it is easy to deduce that Claim \eqref{eq:claim_Db} is true for all $k\ge 1$.
\end{proof}

\subsection{Proofs for Theorem \ref{thm:Lyapunov} and Corollary \ref{cor:underover}}\label{secproof:Lyapunov}

\begin{proof}[Proof of Theorem \ref{thm:Lyapunov}]

	In the context of \eqref{eq:MFL}, where the drift function $b$ is given by \eqref{eq:def_b_MFL},
	we use $m_t(x,v)$ (rather than $\rho_t(x,v)$) to denote the density function of the marginal distribution of $(X_t, V_t)$.

	\vspace{0.5em}	

	Let us fix $T> 0$, and consider the reverse probability $\Qt$ given before Lemma \ref{lemm:Dvm_integ} with coefficient function $b$ in \eqref{eq:def_b_MFL}.
	Recall also the dynamics of $(\Xb, \Vb)$ under $\Qt$ in \eqref{eq:ReverseSDE}. 
	Applying It\^o's formula on $\log \big(m_{T-t} ( \Xb_t, \Vb_t) \big)$, and then using the Fokker-Planck equation \eqref{eq:FP}, it follows that
	\begin{align}\label{eq:backpathwisecalculus}
		& d \log  \big(m_{T-t}(\Xb_t, \Vb_t) \big) \\ 
		&= \Big\{-\frac{\partial_t m_{T-t}}{m_{T-t}}(\Xb_t, \Vb_t) - \nabla_x \log \big(m_{T-t}(\Xb_t, \Vb_t) \big)  \cdot \Vb_t  + \frac12 \sigma^2  \Delta_v \log \big(m_{T-t}(\Xb_t,\Vb_t) \big) \notag\\
		&~~~~~~~~~~~~~~~~~~~~
		+ \nabla_v  \log \big( m_{T-t}(\Xb_t, \Vb_t) \big) \cdot \big( -b(t, \Xb_t, \Vb_t) + \si^2 \nabla_v \log \big( m_{T-t}(\Xb_t, \Vb_t) \big) \big)
		\Big\} dt\notag \\
		& \qquad + \nabla_v \log \big( m_{T-t}(\Xb_t, \Vb_t) \big) \cdot \sigma dW_t \notag\\
		&= \Big( \!\! -n \gamma +\frac12 \Big| \frac{ \sigma \nabla_v m_{T-t}}{m_{T-t}} (\Xb_t, \Vb_t) \Big|^2 \Big) dt 
		+\nabla_v \log \big( m_{T-t}(\Xb_t, \Vb_t) \big) \cdot \sigma d \Wt_t,
		\quad \Qt\mbox{--a.s.}\notag
	\end{align}
	Notice that $m_t = \Lc(X_t ,V_t) = \Lc^{\Qt}(X_{T-t}, V_{T-t})$, then it follows by \eqref{eq:DvRho_integ} that, for all $t \in (0, T)$,
	\begin{align} \label{eq:dH_t}
	   d H(m_t) 
	   &= d \E^{\Qt} \Big[\log \big(m_t(\Xb_{T-t}, \Vb_{T-t}) \big) \Big] \nonumber \\
	   &= \Big(  n \gamma - \frac12 \E \Big[ \big| \sigma \nabla_v  \log \big( m_t(X_t, X_t) \big) \big|^2 \Big] \Big) dt.
	\end{align}
On the other hand, recall that
  \begin{equation} \label{eq:D_mF}
   F(m) = F_\circ(m) + \E^m[f(X)],
    ~~\mbox{and}~~
    D_m F \big( \Lc(X_t) \big) 
    = D_m F_\circ \big( \Lc(X_t) \big) + \nabla f.
  \end{equation}
By a direct computation, one has
  \begin{equation} \label{eq:dF_0t}
    dF_\circ \big( \Lc(X_t) \big)
    = \E \big[ D_m F_\circ \big( \Lc(X_t), X_t \big) \cdot V_t \big] dt.
  \end{equation}
By It\^o formula and \eqref{eq:D_mF}, one has
  \begin{align} \label{eq:df}
  	& d \Big( f(X_t) + \frac12 |V_t|^2 \Big) \nonumber \\
  	&\quad =\Big( \nabla f(X_t) \cdot V_t - V_t \cdot \big( D_m F(\Lc(X_t), X_t) + \gamma V_t \big) + \frac12 \sigma^2 n \Big) dt + V_t \cdot \sigma dW_t \nonumber \\
  	&\quad =\Big( -  D_m F_\circ(\Lc(X_t), X_t) \cdot V_t  - \gamma |V_t|^2 +\frac12 \sigma^2 n \Big) dt+V_t \cdot \sigma dW_t.
  \end{align}

Combining \eqref{eq:dH_t}, \eqref{eq:dF_0t} and \eqref{eq:df},
we obtain
	\begin{align} \label{eq:dFfinal}
	   d \fF (m_t) 	
	   &= d \Big( F\big( \Lc(X_t) \big) + \frac12 \E\big[|V_t|^2\big] + \frac{\sigma^2}{2\g} H(m_t) \Big) \notag \\
	   &= \E \Big[ - \gamma |V_t|^2 +  \sigma^2 n  - \frac{ \sigma^4}{4 \gamma} \big| \nabla_v \log \big( m_t (X_t, V_t)  \big) \big|^2 \Big] dt.
	\end{align}
Further, by Lemmas \ref{lem:moment} and \ref{lemm:Dvm_integ}, it is clear that $\E \big[ \big| \nabla_v \log \big( m_t(X_t, V_t) \big) \cdot V_t \big| \big] < \infty$
and by integration by parts we have
	\begin{align*}
		\E \Big[ \nabla_v \log \big( m_t(X_t, V_t) \big) \cdot V_t \Big]
		&= \frac12 \int_{\R^n}\int_{\R^n} \Big( \nabla_v m_t(x,v) \cdot \nabla_v |v|^2 \Big) dx dv \nonumber \\
		&= - \frac12 \int_{\R^n}\int_{\R^n} \Big( m_t(x,v) \Delta_v |v|^2 \Big) dx dv
		 = -n.
	\end{align*}	
Together with \eqref{eq:dFfinal}, it follows
   \begin{align*}
   	  d \fF (m_t)  = - \g \dbE\left[ \bigg| V_t + \frac{\si^2}{2\g}  \nabla_v \log \big( m_t (X_t, V_t)  \big) \bigg|^2 \right] dt.
   \end{align*}
\end{proof}

\begin{rem}[Time reversal argument]
$\mathrm{(i)}$  In case that $m\mapsto F_\circ(m)$ is linear, Theorem  \ref{thm:Lyapunov} is  a specific case of Fontbona and Jourdain \cite[Corollary 1.6]{FJ16}. Indeed, when  $m\mapsto F_\circ(m)$ is linear, i.e.~$F_\circ(m) = \int f_\circ(x) m(dx)$, the free energy function can be viewed as a relative entropy $m\mapsto H(m|\mu)$ for some reference measure $\mu$, see Remark \ref{rem:entropyrewrite}. In particular, $\mu$ shall be the invariant measure of the (classical) Langevin dynamics. Define $\Qt^*$ to be the law of the time-reverse Langevin diffusion starting from the invariant distribution $\mu$. In \cite{FJ16}, the authors observed that the likelihood process
 \[ \ell_t(\overline X_t, \overline V_t) := \frac{m_{T-t}}{\mu}(\overline X_t, \overline V_t)\]
  is a $\Qt^*$-martingale. Together with the dynamics of $(\overline X_t, \overline V_t)$ under $\Qt$ and the It\^o's formula, one obtains that
  $d\ell_t (\overline X_t, \overline V_t) = \si \nabla_v \ell_t (\overline X_t, \overline V_t)  d\Wt^*_t$ with  a $\Qt^*$-Brownian motion $\Wt^*$ and further that
  \[d \ell_t\log \ell_t (\overline X_t, \overline V_t) = \frac{\si^2}{2} |\nabla_v \log \ell_t |^2  \ell_t(\overline X_t, \overline V_t)dt + \si (\log\ell_t+1) \nabla_v \ell_t(\overline X_t, \overline V_t) d\Wt^*_t.\]
Finally note that 
\begin{align*}
d H(m_t|\mu) &= -  d \dbE^{\Qt^*} \left[\ell_{T-t}\log \ell_{T-t} (\overline X_{T-t}, \overline V_{T-t}) \right]\\
&= -\int\frac{\si^2}{2} |\nabla_v \log m_t |^2  m_t(x,v) dxdvdt,
\end{align*}
which gives the result of Theorem \ref{thm:Lyapunov}.

	\vspace{0.5em}

	\noindent $\mathrm{(ii)}$ In the general case where $m\mapsto F_\circ(m)$ is nonlinear, we still share a similar backward pathwise calculus for the likelihood in \eqref{eq:backpathwisecalculus} as in the proof of \cite[Theorem 1.4]{FJ16}. More naturally, one might try to apply a forward pathwise calculus on $\log(m_t(X_t, V_t))$ instead of the backward one in \eqref{eq:backpathwisecalculus}.  However, by doing so, we shall face an extra term involving $\frac{\D_v  m_t(X_t, V_t)}{m_t(X_t, V_t)} $. Of course, one might further expect to cancel this term under the expectation by applying the integration by part 
\begin{align*}
\dbE\left[\frac{\D_v  m_t(X_t, V_t)}{m_t(X_t, V_t)}\right] = \int \D_v m_t(x, v) dxdv =0.
\end{align*}
Nevertheless, in order to make it rigorous, one needs some integrability property of the $ \D_v m_t$ term, which seems nontrivial to us. The backward pathwise calculus in \eqref{eq:backpathwisecalculus} avoids this technical difficulty. 
\end{rem}

We now provide the proof of the overdamping limit result in Corollary \ref{cor:underover}. 

\begin{proof}[Proof of Corollary \ref{cor:underover}]
	First, as $\sigma = \sigma_0 \sqrt{\gamma}$, one observes that the constant $\widetilde C$ in \eqref{eq:lowerbound} is independent of $\gamma > 0$.
	Notice further that $\fF(m_0) < \infty$.
	Then it follows by \eqref{eq:lowerbound}, together with  the decay of free energy in Theorem \ref{thm:Lyapunov}, that 
	$$
		\sup_{\gamma > 0} \sup_{t \ge 0} \E \Big[ \big| X^{\gamma}_t \big|^2 + \big| V^{\gamma}_t \big|^2  \Big] < \infty.
	$$	
	
	Next, with $\sigma = \sigma_0 \sqrt{\gamma}$, it follows by \eqref{eq:MFL} together with direct computation that
	$$
		X^{\gamma}_{\gamma t}
		-
		X^{\gamma}_0
		~=~
		- \frac{1}{\gamma} \big(V^{\gamma}_{\gamma t} - V_0 \big)
		- 
		\frac{1}{\gamma} \int_0^{\gamma t}  D_m F\big(\Lc(X^{\gamma}_s), X^{\gamma}_s\big) ds
		+
		\frac{\sigma_0}{\sqrt{\gamma}} W_{\gamma t}.
	$$
	Let us define two processes $W^{\gamma}$ and $Y^{\gamma}$ by
	$$
		W^{\gamma}_t := \frac{1}{\sqrt{\gamma}} W_{\gamma t},
		~~\mbox{and}~
		Y^{\gamma}_t := X^{\gamma}_{\gamma t} + V^{\gamma}_{\gamma t}/\gamma,
		~~\mbox{for all}~t \ge 0,
	$$
	one obtains that $Y^{\gamma}$ is the unique strong solution to the SDE
	\begin{eqnarray} \label{eq:SDE_gamma}
		Y^{\gamma}_t - Y^{\gamma}_0
		&=&
		\int_0^{ t}  D_m F(\Lc(X^{\gamma}_{\gamma s}), X^{\gamma}_{\gamma s}) ds
		+
		\sigma_0 W^{\gamma}_t \nonumber \\
		&=&
		\int_0^t K^{\gamma}_s ds
		+
		\int_0^{ t}  D_m F(\Lc(Y^{\gamma}_s), Y^{\gamma}_s) ds
		+
		\sigma_0 W^{\gamma}_t,
	\end{eqnarray}
	where, by Lipschitz property of $D_mF$ in Assumption \ref{assum:conv_initial}, $K^{\gamma}$ satisfies
	$$
		\E \big[ \big| K^{\gamma}_s \big| \big]
		~\le~
		C \E \big[ \big|Y^{\gamma}_s - X^{\gamma}_{\gamma s} \big| \big]
		~=~
		C \E \big[ |V^{\gamma}_{\gamma s}| \big] / \gamma.
	$$
	As $\sup_{\gamma > 0} \sup_{t \ge 0} \E \big[  |V^{\gamma}_{\gamma s}|^2 \big] < \infty$, one has for all $t \ge 0$,
	$$
		\E \Big[ \int_0^t \big| K^{\gamma}_s \big| ds \Big] \longrightarrow 0,
		~\mbox{as}~ \gamma \longrightarrow \infty.
	$$
	Then one can easily apply (with some trivial adaptation) the standard stability result of SDEs (see e.g.~Jacod and M\'emin \cite[Section 3]{jacod1981weak}) to obtain that
	$Y^{\gamma}$ converges weakly on $\Pc\big(C(\R_+, \dbR^n)\big)$ to the overdamped MFL dynamics  $Y$ defined in \eqref{eq:SDE_Y}, as $\gamma \longrightarrow \infty$.

	\vspace{0.5em}
	
	\noindent Finally, as $\sup_{\gamma > 0} \sup_{t \ge 0} \E \big[ \big| V^{\gamma}_t \big|^2  \big] < \infty$,
	it follows that, for all $t \ge 0$, $X^{\gamma}_{\gamma t} \longrightarrow Y_t$ weakly, as $\gamma \longrightarrow \infty$.
\end{proof}

\subsection{Proof of Theorem \ref{thm:ergodicity}}\label{secproof:ergodicity}

Let $(m_t)_{t\ge 0}$ be the flow of marginal laws of the solution to \eqref{eq:MFL}, given an initial law $m_0\in \Pc_2(\R^n \x \R^n)$. Define $S(t)[m_0]:=m_t$. We shall consider the so-called $w$-limit set:
	\begin{align*}
	   w(m_0):= \left\{\mu\in \Pc_2(\R^{n} \x \R^n):~~\mbox{there exist}~t_k\longrightarrow\infty~\mbox{such that}~~ \cW_1\Big(S(t_k)[m_0],\mu\Big)\longrightarrow 0\right\}.
	\end{align*}
	Let us recall LaSalle's invariance principle for dynamical system. 
\begin{lem}
	Let Assumption \ref{assum:conv_initial} hold true. Then $(S(t))_{t \ge 0}$ is a dynamical system on $\cW_1$ space, i.e.
\begin{enumerate}
\item[$\mathrm{(i)}.$] $S(0)$ is $\mathrm{identity}$;
\item[$\mathrm{(ii)}.$] $S(t)\big(S(t')[\mu] \big) = S(t+t')[\mu]$ for all $\mu$ and $t, t'\ge 0$;
\item[$\mathrm{(iii)}.$] for each $\mu$, $t\mapsto S(t)[\mu]$ is continuous (w.r.t. the weak convergence topology);
\item[$\mathrm{(iv)}.$] for each $t\ge 0$, $\mu\mapsto S(t)[\mu]$ is $\cW_1$-continuous.
\end{enumerate}
\end{lem}
\begin{proof}
The properties {\rm(i), (ii)} are trivial. The continuity in {\rm (iii)} follows from the standard stability result for the McKean-Vlasov SDE. Here we prove the property {\rm (iv)}.
Under the upholding assumptions, it follows from Lemma \ref{lem:moment} that 
		\begin{align*}
	\sup_{t\le T}\int \left( |x|^2 + |v|^2\right) m_t(dx, dv)<\infty.
	\end{align*}
	Together with the fact that $t\mapsto m_t$ is continuous w.r.t.~the weak convergence topology, we deduce that  $t\mapsto S(t)$ is continuous with respect to the $\cW_1$-topology.
\end{proof}

\begin{prop}\label{prop:invprin}[Invariance Principle for dynamical system]
	Let Assumption \ref{assum:conv_initial} hold true. Then the set $w(m_0)$ is nonempty, $\cW_1$-compact and  invariant, that is, 
	\begin{enumerate}
		\item for any $\mu\in w(m_0)$, we have $S(t)[\mu]\in w(m_0)$ for all $t\ge 0$;
		\item for any $\mu\in w(m_0)$ and all $t\ge 0$, there exists $\mu'\in w(m_0)$ such that $S(t)[\mu'] =\mu$.
	\end{enumerate}
\end{prop}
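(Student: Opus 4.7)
The plan is to combine the Lyapunov descent from Theorem \ref{thm:Lyapunov} with a continuity property of the semigroup $S(t)$ on $(\cP_2(\dbR^{2n}), \cW_1)$ and then to invoke the classical LaSalle argument.

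First I would establish the uniform second moment bound $\sup_{t \ge 0} \dbE[|X_t|^2 + |V_t|^2] < \infty$. Since $\fF(m_t) \le \fF(m_0)$ by Theorem \ref{thm:Lyapunov}, and Assumption \ref{assum:conv} gives $F_\circ$ bounded together with the coercivity $f(x) \ge \l|x|^2 - C$, the only term in $\fF$ that can be very negative is the entropy. I would control it by the Gaussian comparison $H(m) \ge -\a \dbE^m[|X|^2 + |V|^2] - C_\a$ valid for any $\a > 0$ (applied to the reference density proportional to $e^{-\a(|x|^2+|v|^2)}$); choosing $\a$ small enough makes the remaining quadratic coefficients strictly positive and yields the desired uniform bound. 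This in turn produces both tightness and uniform integrability of first moments, so that the orbit $\{m_t : t \ge 0\}$ is $\cW_1$-precompact.

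Writing $w(m_0) = \bigcap_{T \ge 0} \overline{\{m_s : s \ge T\}}^{\cW_1}$ as a decreasing intersection of nonempty $\cW_1$-closed subsets of the $\cW_1$-compact set $\overline{\{m_t : t \ge 0\}}^{\cW_1}$, the finite intersection property gives $w(m_0)$ nonempty and $\cW_1$-compact. The invariance assertions rest on the $\cW_1$-continuity of the flow $\mu \mapsto S(t)[\mu]$ at each fixed $t \ge 0$, which follows from a synchronous coupling of two copies of the MFL equation and a Gronwall estimate in $\cW_2$ (which dominates $\cW_1$), relying on the Lipschitz regularity of $D_m F$ in $(m,x)$ under Assumption \ref{assum:conv}. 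Forward invariance is then immediate from the semigroup property: if $\cW_1(S(t_k)[m_0], \mu) \to 0$ then $S(s)[\mu] = \lim_k S(s+t_k)[m_0] \in w(m_0)$. For backward invariance, given $\mu \in w(m_0)$ and $s \ge 0$, the $\cW_1$-precompactness of $\{S(t_k - s)[m_0] : t_k > s\}$ supplies a convergent subsequence with limit $\mu' \in w(m_0)$, and continuity yields $S(s)[\mu'] = \mu$.

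The main obstacle is the uniform moment bound: one has to carefully balance the possibly very negative entropy term against the coercive part of $F$ so that the net quadratic form remains positive-definite, and to check along the way that $\fF(m_0) < \infty$ under Assumption \ref{assum:initial} so that the Lyapunov inequality is informative. Once this is secured, the precompactness and the $\cW_1$-continuity of $S(t)$ reduce the rest to routine LaSalle bookkeeping.
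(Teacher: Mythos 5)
Your proposal is correct and its core strategy is the same as the paper's: Theorem \ref{thm:Lyapunov} plus the coercivity of $f$ gives $\sup_{t\ge 0}\dbE[|X_t|^2+|V_t|^2]<\infty$, hence the orbit lies in a $\cW_1$-compact subset of $\cP_2$, and one concludes by LaSalle. The difference is that the paper at this point simply cites Henry's invariance principle (together with the continuity of $t\mapsto S(t)[m_0]$ coming from Lemma \ref{lem:moment}), whereas you re-derive the LaSalle bookkeeping yourself: nonemptiness and compactness of $w(m_0)$ via the decreasing intersection $\bigcap_{T}\overline{\{m_s: s\ge T\}}$, forward invariance from the semigroup property, backward invariance by extracting a convergent subsequence of $S(t_k-s)[m_0]$. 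This buys self-containedness and forces you to make explicit the continuity of $\mu\mapsto S(t)[\mu]$ in the initial law, which the appeal to Henry also tacitly requires; likewise your Gaussian comparison $H(m)\ge -\a\,\dbE^m[|X|^2+|V|^2]-C_\a$ is more careful than the paper's bare assertion that $H\ge 0$ (differential entropy need not be nonnegative), and your remark that the descent inequality should be used only from positive times, where $m_s$ has a density and finite entropy, matches the fact that Theorem \ref{thm:Lyapunov} is stated for $t>s>0$. One small repair is needed in your continuity step: a Gronwall estimate in $\cW_2$ does not give what you use, because the elements of $w(m_0)$ are limits in $\cW_1$ only and $\cW_1$-convergence of $m_{t_k}$ does not imply $\cW_2$-convergence (the inequality $\cW_1\le\cW_2$ goes the wrong way here). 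The fix is immediate: run the same synchronous coupling and do the Gronwall bound directly on $\dbE[|\d X_t|+|\d V_t|]$, using that $D_mF$ is Lipschitz in $x$ and Lipschitz in $m$ for $\cW_1$ under Assumption \ref{assum:conv}; this yields $\cW_1\big(S(t)[\mu],S(t)[\nu]\big)\le C_t\,\cW_1(\mu,\nu)$, after which your forward and backward invariance arguments close as written.
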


\begin{proof}
	Due to Theorem \ref{thm:Lyapunov},  we know that $\{\fF(m_t)\}_{t\ge 0}$ is  bounded. Together with the lower bound of the energy function  \eqref{eq:lowerbound}, we obtain 
	\begin{align*}
		C:=\sup_{t\ge 0} \dbE\big[|X_t|^2 + |V_t|^2 \big] <\infty.
	\end{align*}
	Therefore $\big(S(t)[m_0]\big)_{t\ge 0} = (m_t)_{t\ge 0}$ lies in the $\cW_1$-compact set 
	  $$ \left\{ m\in \Pc_2(\R^n \x \R^n) : \int \left( |x|^2 + |v|^2\right) m (dx, dv)\le C \right\}. $$
	Then the desired invariance result follows from \cite[Theorem 4.3.3]{Henry81}.
\end{proof}

\begin{lem}\label{lem:limit_necessary}
	Let Assumption \ref{assum:conv_initial} hold true. 
	Then, every $m^*\in w(m_0)$ has a density and we have
	\begin{align}\label{eq:limit_necessary}
		v + \frac{\si^2}{2\g} \nabla_v \log \big( m^*(x,v) \big) = 0,\quad {\rm Leb^{2n}-}a.e.
	\end{align}
\end{lem}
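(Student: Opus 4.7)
My plan is to adapt LaSalle's invariance principle, with the dissipation rate from Theorem \ref{thm:Lyapunov} playing the role of the Lyapunov derivative that must vanish asymptotically on $w(m_0)$. Combining Theorem \ref{thm:Lyapunov} with the proof of Proposition \ref{prop:invprin} (which already uses $\fF(m_t) \le \fF(m_0)$ to secure a uniform second-moment bound), the map $t \mapsto \fF(m_t)$ is monotone decreasing and bounded, hence converges to some finite $\fF_\infty$. Consequently, for any fixed $0 < s < t$ and any diverging sequence $t_k \to \infty$,
\bee
D_k := \int_s^t \g\, \dbE\Big[\big|V_{t_k+r} + \tfrac{\si^2}{2\g}\nabla_v \ln m_{t_k+r}(X_{t_k+r}, V_{t_k+r})\big|^2\Big]\,dr = \fF(m_{t_k+s}) - \fF(m_{t_k+t}) \xrightarrow[k\to\infty]{} 0.
\eee

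Given $m^* \in w(m_0)$, I would fix such $t_k$ with $\cW_1(m_{t_k}, m^*) \to 0$. Continuous dependence on initial laws for the McKean--Vlasov SDE (a standard Gronwall argument under the globally Lipschitz drift of Assumption \ref{assum:conv}) yields $m_{t_k+r} = S(r)[m_{t_k}] \to S(r)[m^*] =: m^*_r$ in $\cW_1$ for every $r \ge 0$. Moreover, Proposition \ref{prop:invprin}(ii) lets me write $m^* = S(\tau)[\mu]$ for any $\tau > 0$ and some $\mu \in w(m_0)$, so Lemma \ref{lem:exist_density} already furnishes a strictly positive density for $m^*$ itself.

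The main technical step is to pass to the liminf in $D_k$. I would rewrite the integrand as
\bee
\g\, \dbE^m\Big[\big|V + \tfrac{\si^2}{2\g}\nabla_v \ln \rho_m\big|^2\Big] = \tfrac{\si^4}{4\g}\, \cI_v(m \,|\, \nu), \qquad \nu := dx \otimes G(v)\,dv,\; G(v) \propto e^{-\g |v|^2/\si^2},
\eee
i.e., (up to a constant) the relative Fisher information of $m$ with respect to the product reference $\nu$, computed only in the $v$-variable. Lower semicontinuity of this partial relative Fisher information under weak convergence is a classical fact, obtainable e.g.\ from its dual variational representation against $C^\infty_c$ test vector fields. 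Combining it with Fatou's lemma,
\bee
0 = \liminf_k D_k \; \ge \; \int_s^t \g\, \dbE^{m^*_r}\Big[\big|V + \tfrac{\si^2}{2\g}\nabla_v \ln m^*_r\big|^2\Big] dr,
\eee
so the non-negative integrand vanishes for Lebesgue-a.e.\ $r \in (s,t)$. Positivity of the densities $\rho_{m^*_r}$ (Lemma \ref{lem:exist_density}) then upgrades this to $v + \tfrac{\si^2}{2\g}\nabla_v \ln m^*_r(x,v) = 0$ for Lebesgue-a.e.\ $(x,v)$, for such $r$.

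Finally, for each such good $r$, the identity forces $m^*_r$ to factorize as $m^*_r(x,v) = g_r(x) G(v)$ for some $g_r$. Choosing $r_n \downarrow 0$ within the set of good times, continuity of $r \mapsto S(r)[m^*]$ in $\cW_1$ gives $m^*_{r_n} \to m^*$, and the product form is preserved in the weak limit since $G$ is fixed and continuous; hence $m^*(dx,dv) = \nu^*(dx) \otimes G(v)\,dv$. Combined with the strictly positive density of $m^*$ established in the second paragraph, $\nu^*$ admits a density $g^*$ and $m^*(x,v) = g^*(x) G(v)$ solves \eqref{eq:limit_necessary}. The delicate point in this plan is the lower-semicontinuity passage, since it demands enough control on $\nabla_v \ln \rho_{m_{t_k+r}}$ under the $\cW_1$-limit for the Fisher information bound to survive.
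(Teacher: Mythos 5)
Your proposal is correct, but it follows a genuinely different route from the paper at the key analytic step. The paper first extracts a sequence of shifts $\d_i\downarrow 0$ along which $\liminf_k$ of the dissipation vanishes (a contradiction/Fatou argument), then converts the Fisher-type dissipation into a \emph{relative entropy} via the logarithmic Sobolev inequality for the Gaussian $\cN(0,\tfrac{\si^2}{2\g}I_n)$, and concludes $H\big(m^*\,\big|\,m^{*,X}\times\cN(0,\tfrac{\si^2}{2\g}I_n)\big)=0$ by using the weak lower semicontinuity of relative entropy twice (once in $k$, once as $\d_i\to0$); the density of $m^*$ is obtained there from the uniform bound on $H(m_{t_k^i})$ and the resulting uniform integrability. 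You instead keep the dissipation as a \emph{partial relative Fisher information}, use Fatou over a fixed time window $(s,t)$ to make it vanish for a.e.\ shifted time, and pass to the limit via its dual variational representation against $C_c^\infty$ vector fields (which indeed gives weak lower semicontinuity, since after integrating by parts against $dx\otimes G(v)dv$ each test functional is a bounded continuous integrand); you then transport the product structure $\mu_r\otimes G$ down to $r=0$ by weak convergence, and you get the density of $m^*$ from the invariance principle plus Lemma \ref{lem:exist_density} --- exactly the device the paper uses later in Lemma \ref{lem:mstar}, so there is no circularity. The trade-off: the paper's LSI-plus-entropy route only needs the very standard lower semicontinuity of relative entropy, while your route avoids the log-Sobolev inequality altogether and yields the first-order identity for a.e.\ positive time directly; the ``delicate point'' you flag is precisely settled by the duality formula you cite, so no gap remains. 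Two small points to make explicit in a write-up: the good times lie in $(s,t)$ with $s>0$, so to choose $r_n\downarrow0$ you should let $s\downarrow0$ as well (harmless since Theorem \ref{thm:Lyapunov} holds for all $t>s>0$), and the continuity of $m_0\mapsto S(r)[m_0]$ in $\cW_1$ that you invoke is the same ingredient the paper uses in its Step 2, so it needs the Lipschitz dependence of $D_mF$ on the measure granted by Assumption \ref{assum:conv}.
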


\begin{proof}
	Let $m^*\in w(m_0)$ and denote by $(m_{t_k})_{k\in \dbN}$ the subesquence converging to $m^*$ in $\cW_1$. 
	
	\ms 
	\noindent {\bf Step 1.}\quad We first prove that there exists a sequence $\d_i \longrightarrow 0$ such that 
	 \begin{align} \label{eq:deltailimit}
	 	\liminf_{k\longrightarrow\infty}  \dbE\bigg[ \Big| V_{t_k+\d_i} + \frac{\si^2}{2\g}  \nabla_v \log \big( m_{t_k+\d_i} (X_{t_k+\d_i}, V_{t_k+\d_i})  \big) \Big|^2 \bigg]  = 0, \quad \mbox{for all $i\in \dbN$}.
	 \end{align}
	Suppose the contrary. Then we would have for some $\d >0$
	\begin{align*}
		0 &< \int_0^\d \liminf_{k\longrightarrow\infty}  \dbE\bigg[ \Big| V_{t_k+s} + \frac{\si^2}{2\g}  \nabla_v \log \big( m_{t_k+s} (X_{t_k+s}, V_{t_k+s})  \big) \Big|^2 \bigg]  ds \\
		&\le \liminf_{k\longrightarrow\infty}  \int_0^\d \dbE\bigg[ \Big| V_{t_k+s} + \frac{\si^2}{2\g}  \nabla_v \log \big( m_{t_k+s} (X_{t_k+s}, V_{t_k+s})  \big) \Big|^2 \bigg] ds,
	\end{align*}
	where the last inequality is due to Fatou's lemma. This is a contradiction against  Theorem \ref{thm:Lyapunov} and  the fact that $\fF$ is bounded from below. 
	\ms
	
	\noindent {\bf Step 2.}\quad Denote by $t_k^i: = t_k+\d_i$ and $m^*_t: =S(t)[m^*]$. 
	Note that
	\begin{align*}
	 &\lim_{k\longrightarrow\infty}\cW_1\Big(m_{t_k}, m^* \Big) = 0  \\
	 &\quad \Longrightarrow \quad 
 	  \lim_{k\longrightarrow\infty}\cW_1\Big(m_{t_k^i}, m^*_{\d_i}\Big) = \lim_{k\longrightarrow\infty}\cW_1\Big(S(\d_i)[m_{t_k}], S(\d_i)[m^*]\Big) =0.
	\end{align*}

	Now fix $i\in \dbN$.  Due to Theorem \ref{thm:Lyapunov} and the fact that $\{F(m_t) + \frac12\dbE[|V_t|^2]\}_{t\ge 0}$ is bounded from below, the set $\{H(m_{t_k^i})\}_{k\in \dbN}$ is uniformly bounded. Therefore the densities $(m_{t_k^i})_{k\in \dbN}$ are uniformly integrable with respect to Lebesgue measure, and thus $m^*$ has a density.   Note that
	  \begin{align*}
	  	&\dbE\bigg[ \Big| V_{t_k^i} + \frac{\si^2}{2\g}  \nabla_v \log \Big( m_{t_k^i}\big(X_{t_k^i}, V_{t_k^i}\big)  \Big) \Big|^2 \bigg] \\
	  	&= \frac{\si^4}{4\g^2} \int_{\dbR^{2n}}\frac{ \left| \nabla_v \big(m_{t_k^i}(x,v) e^{\frac{\g}{\si^2} |v|^2}\big)\right|^2}{ m_{t_k^i}(x,v) e^{\frac{\g}{\si^2} |v|^2} }e^{-\frac{\g}{\si^2} |v|^2} dv dx.
	  \end{align*}
	 It is noteworthy that $ \int_{\dbR^{n}}\frac{ \left| \nabla_v \big(m_{t_k^i}(x,v) e^{\frac{\g}{\si^2} |v|^2}\big)\right|^2}{ m_{t_k^i}(x,v) e^{\frac{\g}{\si^2} |v|^2} }e^{-\frac{\g}{\si^2} |v|^2} dv$ is the relative Fisher information of the law $m_{t_k^i}(x, \cdot)$ with respect to the  Gaussian distribution $\mu^*_v := \cN(0,\frac{\si^2}{2\g} I_n)$.
	 Define the function $h_k^i(x,v):= m_{t_k^i}(x,v) e^{\frac{\g}{\si^2} |v|^2}$. By logarithmic Sobolev inequality for the Gaussian distribution we obtain
	  \begin{align*}
	     \int \left(\int h_k^i \log ( h_k^i ) d \mu^*_v  - \Big( \int h_k^i d\mu^*_v \Big) \log \Big( \int h_k^i d\mu^*_v \Big) \right)dx
	     \le C \int \frac{|\nabla_v h_k^i|^2}{h_k^i} d\mu^*_v dx.
	  \end{align*}
	Together with \eqref{eq:deltailimit} we obtain
	  \begin{align} \label{ineq:tk}
	  	0 &= \lim_{k\longrightarrow\infty}  \dbE\Big[ \big| V_{t_k^i} + \frac{\si^2}{2\g}  \nabla_v \log \big( m_{t_k^i} (X_{t_k^i}, V_{t_k^i})  \big) \big|^2 \Big] \notag \\
	  	  &\ge C \limsup_{k\longrightarrow\infty} \int \left(\int h_k^i \log (h_k^i)  d \mu^*_v  - \Big( \int h_k^i d\mu^*_v \Big) \log \Big( \int h_k^i d\mu^*_v \Big) \right)dx.
	  \end{align}
	Since $\int h_k^i d\mu^*_v = \int m_{t_k^i} dv = m^X_{t_k^i}$, we further have
	  \begin{align*}
	  	 0 &\ge C  \limsup_{k\longrightarrow\infty} \int \left( m_{t_k^i} \log ( h_k^i ) -  m_{t_k^i} \log \big( m^X_{t_k^i} \big) \right)dvdx \\
	  	  &= C \limsup_{k\longrightarrow\infty} \int \left( m_{t_k^i} \log \frac{m_{t_k^i}}{m^X_{t_k^i}e^{-\frac{\g}{\si^2}|v|^2}}\right)dvdx \\
	  	  & = C \limsup_{k\longrightarrow\infty} H \left(m_{t_k^i} \bigg| m^X_{t_k^i}\times \cN\Big(0, \frac{\si^2}{2\g} I_n\Big)\right) \\
	  	  &\ge C H \left(m^*_{\d_i} \bigg| m^{*,X}_{\d_i} \times \cN\Big(0, \frac{\si^2}{2\g} I_n\Big)\right).
	  \end{align*}
	The last inequality is due to the lower semi-continuity of the relative entropy in weak topology. Finally, since $\lim_{i\to\infty}\cW_1(m^*_{\d_i},m^*)=0$,  we get 
	  \begin{align*}
	  	H \left(m^* \Big| m^{*, X} \times \cN\Big(0, \frac{\si^2}{2\g} I_n\Big)\right)=0
	  \end{align*} 
	  and thus
	  \begin{align*}
	  	m^* = m^{*,X} \times \cN\Big(0, \frac{\si^2}{2\g} I_n\Big).
	  \end{align*}
	This immediately implies \eqref{eq:limit_necessary}.
\end{proof}

\begin{lem} \label{lem:mstar}
	Let Assumption \ref{assum:conv_initial} hold true. Then, each $m^*\in w(m_0)$ is equivalent to Lebesgue measure.
\end{lem}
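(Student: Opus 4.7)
The plan is to combine the invariance of the $w$-limit set from Proposition \ref{prop:invprin} with the strict positivity of densities established in Lemma \ref{lem:exist_density}. Fix any $m^* \in w(m_0)$ and any $t>0$. By part (2) of Proposition \ref{prop:invprin}, there exists $\mu' \in w(m_0)$ such that $S(t)[\mu'] = m^*$. In other words, $m^*$ can be represented as the time-$t$ marginal of the MFL dynamics \eqref{eq:MFL} started from $\mu'$.

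Once $\mu'$ is fixed as the initial law, the deterministic flow of marginals $m_s := S(s)[\mu']$ is prescribed, so the drift coefficient
\[
    b(s, x, v) ~:=~ D_m F(m_s^X, x) + \gamma v
\]
becomes a genuinely deterministic function of $(s, x, v)$, and the MFL reduces to a classical SDE of the form \eqref{eq:SDE}. By the proposition stated just before Section~4.2, under Assumption \ref{assum:conv} this drift is continuous in time and globally Lipschitz in $(x,v)$, so it satisfies the standing hypotheses on $b$ required for the results of Section 4.1. Moreover, from the proof of Proposition \ref{prop:invprin} we know that $w(m_0)$ is contained in a $\cW_1$-compact subset of $\cP_2^{2n}$, so in particular $\int(|x|^2+|v|^2) \, d\mu'(x,v) < \infty$, which provides the initial-moment hypothesis needed to invoke Lemma \ref{lem:exist_density}.

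Applying Lemma \ref{lem:exist_density} to this SDE with initial law $\mu'$ then yields that $m^* = S(t)[\mu']$ has a strictly positive density on $\R^{2n}$. Since a probability measure with a density strictly positive almost everywhere has the same null sets as the Lebesgue measure, $m^*$ is equivalent to $\rLeb$, as claimed.

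The main conceptual step is the use of the invariance property to rewrite $m^*$ as a time-$t$ marginal rather than merely as a weak limit of $(m_{t_k})$; this is what allows us to bypass the fact that weak limits generally do not preserve equivalence to Lebesgue measure. Once this reformulation is in place, the rest of the argument is a direct application of the machinery already developed in Section 4.1, so I do not anticipate any serious technical obstacle; the only small check is that $\mu'$ lies in $\cP_2$, which is immediate from the uniform moment bounds noted in the proof of Proposition \ref{prop:invprin}.
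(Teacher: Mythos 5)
Your proof is correct and follows exactly the route of the paper: use the backward invariance of $w(m_0)$ (Proposition \ref{prop:invprin}(2)) to write $m^* = S(t)[\mu']$ for some $\mu'\in w(m_0)$ and $t>0$, then apply Lemma \ref{lem:exist_density} to the resulting (deterministic-drift) SDE to get a strictly positive density. The extra checks you carry out — that the frozen drift satisfies the hypotheses of Section 4.1 and that $\mu'$ has finite second moment — are exactly the details the paper leaves implicit.
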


\begin{proof}
	By the invariant principle we may find a probability measure $m^\circ\in w(m_0)$ such that $m^* = S(t)[m^\circ]$ for a fixed $t>0$. 
	Then the desired result follows from Lemma \ref{lem:exist_density}. 
\end{proof}

\vspace{0.5em}

	Note that the necessary condition \eqref{eq:limit_necessary} for $m^*\in w(m_0)$ is not enough to identify $m^*$ as the invariant measure as required in Theorem \ref{thm:ergodicity}.  
	We are going to trigger the invariance principle to complete the proof.

\vspace{0.5em}

\begin{proof}[Proof of Theorem \ref{thm:ergodicity}]

	Since $F$ is convex, so that $\fF$ is strictly convex, so that the optimization problem $\inf_{m\in \Pc_2(\R^{n} \x \R^n)} \fF(m)$ has a unique minimizer $\underline m$,
	which is given by \eqref{eq:FOCeq}.
	Therefore, to conclude the proof, it is enough to check that any $m^* \in w(m_0)$ satisfies \eqref{eq:FOCdensity}.

	\vspace{0.5em}

	Let $m^*\in w(m_0)$ and define $m^*_t:= S(t)[m^*]$ for all $t\ge 0$. Denote by $(X^*_t, V^*_t)_{t\ge 0}$ the solution to the MFL equation \eqref{eq:MFL} with initial distribution $m^*$. Take a test function $h\in C^1(\dbR^n)$ with compact support.  It follows from It\^o's formula that  
	\begin{align} \label{eq:dynamic_fourier}
		d V^*_t h(X^*_t)
		 &= \Big(- h(X^*_t) \big(D_m F(m^{X,*}_t, X^*_t)+\g V^*_t\big) + \big(\nabla_x  h(X^*_t) \cd V^*_t\big) V^*_t\Big) dt \nonumber \\
		 &\qquad + \si h(X^*_t) dW_t,
	\end{align}
	where $m^{X,*}_t$ denotes the pushforward measure of $m^*_t$ under the map $(x,v) \longmapsto x$.
	By the invariance principle, we have $m^*_t \in w(m_0)$ for all $t\ge 0$, and by Lemma \ref{lem:limit_necessary} we have
	\begin{align*}
	  v + \frac{\si^2}{2\g} \nabla_v \log \big( m^*_t(x,v) \big) = 0, \quad {\rm Leb^{2n}}-a.e.	
	\end{align*}
So there exists a measurable function $(t,x)\mapsto \widehat m_t(x)$ such that $m^*_t(x,v) = e^{-\frac{\g}{\si^2}|v|^2}\widehat m_t(x)$. In particular, we observe that for each $t\ge 0$,  the random variables $X^*_t,V^*_t$ are independent and $V^*_t$ follows the Gaussian distribution $\cN(0, \frac{\si^2}{2\g}I_n)$. Taking expectation on both sides of \eqref{eq:dynamic_fourier}, we obtain
   \begin{align} \label{eq:exp_fourier}
   	 0 &= \dbE \Big[ - h(X^*_t) \big(D_m F(m^{X,*}_t, X^*_t)+\g V^*_t\big) + \big(\nabla_x  h(X^*_t) \cd V^*_t\big)  V^*_t \Big] \notag \\
   	   &= \dbE \Big[ - h(X^*_t)  D_m F(m^{X,*}_t, X^*_t)+  \frac{\si^2}{2\g} \nabla_x  h(X^*_t)   \Big],
	   ~~ \mbox{for a.e.~$t > 0$}.
   \end{align}
	Observe that
   \begin{align*}
   	  \dbE \left[\nabla_x  h(X^*_t) \right] = C_t\int_{\dbR^n}  \nabla_x h(x) \widehat  m_t(x)dx
   	  = -C_t \int_{\dbR^n} h(x) \nabla_x \widehat m_t(x) dx,
   \end{align*}
   where $C_t$ is the normalization constant such that $C_t \widehat m_t$ is a density function, and $\nabla_x \hat m_t$ is the weak derivative in sense of distribution. 
Together with \eqref{eq:exp_fourier} we have
   \begin{align*}
   	 \int_{\dbR^n} h(x) \Big(-  D_m F(m^{X,*}_t, x) \widehat m_t(x) - \frac{\si^2}{2\g}  \nabla_x \widehat m_t(x) \Big) dx =0,
	 ~~\mbox{for a.e.}~t > 0.
   \end{align*}
		Notice that for all $N > 0$, the Hilbert space $L^2([-N, N]^n)$ has a countable (smooth functional) basis, this allows us to consider
		arbitrary $h$ in a countable space to obtain that
   \begin{align*}
   	 D_m F(m^{X,*}_t, x) + \frac{\si^2}{2\g} \nabla_x \log \big( \widehat m_t(x) \big) =0, \quad m^*_t\mbox{-a.s. for a.e. $t > 0$}.
   \end{align*}
By Lemma \ref{lem:mstar}, $m^*_t$ is equivalent to Lebesgue measure, and thus we have
   \begin{align*}
   	 \begin{cases}
   	 D_m F(m^{X,*}_t, x) + \frac{\si^2}{2\g} \nabla_x \log \big( m^*_t(x,v) \big) =0, \\
   	 v + \frac{\si^2}{2\g} \nabla_v \log \big( m^*_t(x,v) \big) =0,
   	 \end{cases}
   	 \mbox{for all $(x,v)\in \dbR_+ \times \dbR^{2n}$}, \,\, \mbox{for a.e.}~ t > 0.
   \end{align*}
	Therefore,  by Lemma \ref{lem:FOC}, one has $m^*_t = \underline m$.
	Taking into account that $\lim_{t\longrightarrow 0}\cW_1(m^*_t, m^*) =0$, we obtain $m^* = \underline m$. 
	This is enough conclude that $w(m_0) = \{ \underline m\}$, and thus $\lim_{t\longrightarrow\infty} \cW_1(m_t,\underline m ) =0$.
\end{proof}

\subsection{Exponential ergodicity given small mean-field dependence}\label{secproof:exp}

Under Assumption \ref{assum:conv_initial} and Assumption \ref{assum:exp}, we consider the following equation:
\begin{align}
\begin{cases} \label{eqn:MFLcoupling}
d X_t = V_t dt, \\
d V_t = -\big(b(m^X_t, X_t) + \lambda X_t + \g V_t \big) dt + \si dW_t,
\end{cases}
\end{align}
where $b: \Pc_2(\dbR^n)\times \R^n\longrightarrow \dbR$ is Lipschitz in the variable $x$ 
\begin{align*}
	\big|b(m,x)-b(m,x')\big| \leq L^x |x-x'|,
\end{align*}
and for any $\varepsilon>0$ there exists $M>0$ such that for any $m,m'\in\Pc_2(\dbR^n)$ 
\begin{equation*}
\big|b(m,x)-b(m',x')\big| \leq \varepsilon|x-x'|, \quad\mbox{whenever } |x-x'|\geq M, 
\end{equation*}
and for each $x\in\dbR^n$
\begin{equation}  \label{eq:AssumpLipMeasure}
\big|b(m, x) - b(m',x)\big|
\le   \iota \cW_1(m,m'),
\end{equation}
where the constant $\iota>0$ is small enough, satisfying the quantitative condition \eqref{eq:smalliota} below. In the sequel, for the simplicity of notation, we write $b$ instead $D_mF_o$. Besides, we notice that for this part the special form $D_mF_o$, the intrinsic derivative of a function, is not necessary.

\subsubsection{Reflection-Synchronous Coupling}
We are going to show the contraction result in Theorem \ref{thm:Contraction} via the coupling technique.   Let $(X,V)$ and $(X',V')$ be the two solutions of \eqref{eqn:MFLcoupling} driven by the Brownian motions $W$ and $W'$, respectively.  Define $\delta X = X-X'$ and $\delta V = V-V'$.
We introduce the change of variable 
$$ P_t:= \delta V_t + \gamma \delta X_t. $$
Then, the processes $\delta X$ and $P$ satisfy the following stochastic differential equations
\begin{align*}
d\delta X_t &= (P_t - \gamma\delta X_t)dt, \\
dP_t &= -\big(\delta b_t + \lambda\delta X_t\big)dt + \sigma d{\delta W_t},
\end{align*}
where $\delta W = W-W'$ and $\delta b_t:= b\big(m_t^X,X_t\big)-b\big(m_t^{X'},X'_t\big)$.

\begin{rem}\label{rem:EGZ}
	We shall apply the reflection-synchronous coupling following the blueprint in Eberle, Guillin and Zimmer \cite{EGZ19}, of which the main idea is to separate the space $\dbR^n\times \dbR^n$ into two parts: 
	  \begin{enumerate}
	     \item[$\mathrm{(i)}.$] $(\d X_t, P_t)$ locates in a compact set; 
	     \item[$\mathrm{(ii)}.$] $|\delta X_t|+\eta |P_t| $ is big enough, where the constant $\eta$ is to be determined. 
	  \end{enumerate}
	As in \cite{EGZ19} we are going to apply the reflection coupling on the area {\rm (i)} and the synchronous coupling on  the area {\rm (ii)}.  However,  note that in \cite{EGZ19} the argument for the contraction on the area {\rm (ii)} relies on a Lyapunov function, which can no longer play its role in the mean-field context. Therefore, we are going to construct another function (the function $G$ in \eqref{eq:funpsi}) which decays exponentially on the area {\rm (ii)}.
\end{rem}

Recall the definitions 
$$ r_t := |\d X_t|, \quad u_t := |P_t|, \quad z_t:= \d X_t \cd P_t. $$
Let $\xi>0$. For technical reason we shall also apply the  synchronous coupling on the area $u_t <\xi$, and
eventually we will let $\xi\downarrow 0$.
In order to couple the two processes $(X, V), (X', V')$, we consider two Lipschitz continuous functions $\rc, \syc:\R^{2n}\longrightarrow [0,1]$ such that $\rc_t^2 + \syc^2_t\equiv 1$, 
\begin{align*}
\rc(\delta X_t, P_t) =
\begin{cases}
0, &\mbox{when}~~ u_t = 0 ~~\mbox{or}~~  r_t +\eta u_t \ge 2M+\xi, \\
1, &\mbox{when}~~ u_t \ge \xi ~~\mbox{and}~~r_t +\eta u_t  \le 2M.
\end{cases}
\end{align*}
The values of the constants $\eta$, $M\in(0,\infty)$ will be determined later. 
Define 
\begin{align*}
e_t^x:=\begin{cases}
\frac{\d X_t}{|\d X_t|}, & \mbox{if } \d X_t\neq 0, \\
~0, &  \mbox{if } \d X_t = 0,
\end{cases}  
\quad \mbox{ and }\quad
e_t^p:=\begin{cases}
\frac{P_t}{|P_t|}, & \mbox{if } P_t\neq 0, \\
~0, &  \mbox{if } P_t = 0.
\end{cases}. 
\end{align*}
With two independent Brownian motions $W^{\rc}$ and $W^{\syc}$ we consider the following coupling
\begin{align*}
\begin{cases}
dW_t = \rc(\delta X_t,P_t)dW_t^{\rc} + \syc(\delta X_t,P_t)dW_t^{\syc}, \\
dW'_t = \rc(\delta X_t,P_t)\big(I_n-2e^p_t(e^p_t)^\top\big)dW_t^{\rc}+\syc(\delta X_t,P_t)dW_t^{\syc},
\end{cases}
\end{align*}
in particular we have $d\delta W_t = 2\rc(\delta X_t,P_t)e^p_t(e^p_t)^\top dW_t^{\rc}.$
By L\'evy characterization, the process $B_t:=(e^p_t)^\top W_t^{\rc}$ is a one-dimensional Brownian motion. 
For the sake of simplicity, denote $\rc_t:=\rc(\delta X_t,P_t)$.
We notice that the Lipschitz continuity of the functions $\rc, \syc$ ensures the existence and uniqueness of the coupling process. 

To conclude, with the reflection-synchronous coupling, the processes $\delta X$ and $P$ satisfy  the following stochastic differential equations
\begin{equation}  \label{eqn:MFLcoupling1}
\begin{aligned}
d\delta X_t &= (P_t - \gamma\delta X_t)dt, \\
dP_t &= -\big(\delta b_t + \lambda\delta X_t\big)dt + 2\sigma\rc_t e_t^pd{ B_t}.
\end{aligned}
\end{equation} 

\subsubsection{The auxiliary function}\label{sec:G}

As reported in Remark \ref{rem:EGZ}, the main novelty of our contraction result is to construct a function exponentially decaying along the process \eqref{eqn:MFLcoupling1}  when $r +\eta u$ is sufficiently large. In this subsection we are going to construct the auxiliary function according to the different settings. 

First, it follows from \eqref{eqn:MFLcoupling1} and It\^o's formula that
\begin{align*}
dr^2_t &= 2\delta X_t\cdot d\delta X_t = 2\delta X_t\cdot 
        = 2\left(\delta X_t\cdot P_t - \gamma|\delta X_t|^2 \right)dt 
        = 2\left(z_t -\gamma r_t^2 \right)dt, \\ 
du^2_t &= 2P_t\cdot dP_t +  I_n: d\langle P\rangle_t 
        = -2\left(\delta b_t\cdot P_t + \lambda z_t\right)dt + 4\sigma \rc_t u_t dB_t + 4\sigma^2\rc_t^2 dt, \\
dz_t & = \delta X_t\cdot dP_t + P_t\cdot d\delta X_t 
       = \left(-\delta b_t\cdot\delta X_t - \lambda r_t^2 + u_t^2 - \gamma z_t\right)dt + 2\sigma \rc_t\frac{z_t}{u_t}dB_t, \\
dr_t & = \frac{1}{|\delta X_t|}\delta X_t\cdot d\delta X_t = \big(e^x_t\cdot P_t - \gamma r_t\big)dt, \\
du_t & = \frac{1}{2}\frac{1}{\sqrt{u_t^2}}du^2_t + \frac{1}{2}\left(-\frac{1}{4}\frac{1}{\sqrt{u_t^2}^3}\right)d\langle u^2\rangle_t
       = -\big(\delta b_t\cdot e^p_t + \lambda e^p_t\cdot\delta X_t \big)dt + 2\sigma\rc_t dB_t.
\end{align*}
We write the dynamics of $\big(z,r^2,u^2\big)$ in the following way
\begin{align} \label{eq:dzru=Adt}
d\begin{pmatrix}
z_t \\
r^2_t \\
u^2_t 
\end{pmatrix}
& = A\begin{pmatrix}
z_t \\
r^2_t \\
u^2_t  
\end{pmatrix}dt 
+ \begin{pmatrix}
-\delta b_t\cdot\delta X_t \\
0 \\
-2\delta b_t\cdot P_t + 4\rc_t^2\sigma^2
\end{pmatrix}dt
+ \begin{pmatrix}
2\frac{z_t}{u_t}  \\
0 \\
4 u_t
\end{pmatrix}\sigma\rc_t dB_t
\end{align}
with the matrix
\begin{align*}
A:= \begin{pmatrix}
-\gamma  &   -\lambda    &  1   \\
2      &  -2\gamma                          &  0   \\
-2\lambda & 0 &  0
\end{pmatrix}.
\end{align*}

\begin{rem}
	As we will show later, the value of $\d b_t$ is small whereas $r_t + \eta u_t$ is big enough. Therefore, the coupling system is nearly linear and  its contraction rate mainly depends on the matrix $A$. 
\end{rem}

The eigenvalues of $A$ solve the equation:
\begin{align*}
0 &= (\zeta+ \g) (\zeta+2\gamma)\zeta + 2\l(\zeta+2\gamma) + 2\l\zeta 
= (\zeta+\gamma)(\zeta^2 + 2\gamma\zeta + 4\lambda).
\end{align*}	
We divide the discussion into two cases, based on the different values of $\lambda$ and $\gamma$.
\begin{itemize}
	\item[{\rm (a)}] If $\lambda <\frac{\gamma^2}{4}$, the matrix has three different negative eigenvalues 
	$$ \zeta = -\gamma, \quad \zeta = -\gamma + \sqrt{\gamma^2-4\lambda}, \quad \zeta = -\gamma - \sqrt{\gamma^2-4\lambda}, $$
	in particular,  it can be diagonalized. More precisely, we have  $QA=\Lambda Q$ with
	the  transformation matrix 
	\begin{align*}
	Q:=
	\begin{pmatrix}
	-\gamma  &  \lambda  & 1 \\
	-\gamma +\sqrt{\gamma ^2-4 \lambda } & \frac{1}{2} \left( \gamma^2 - 2\lambda -\gamma\sqrt{\gamma^2-4 \lambda }\right) & 1 \\
	-\gamma -\sqrt{\gamma ^2-4 \lambda } & \frac{1}{2} \left( \gamma^2 - 2\lambda +\gamma\sqrt{\gamma^2-4 \lambda }\right) & 1 \\
	\end{pmatrix}
	\end{align*}
	and the diagonal  matrix $\Lambda = {\rm diag}\big(-\gamma, -\gamma +\sqrt{\gamma ^2-4 \lambda }, -\gamma-\sqrt{\gamma ^2-4 \lambda} \big)$.
	Multiply $Q$ on both sides of \eqref{eq:dzru=Adt} and obtain
	\begin{align} \label{eq:Qdynamics}
		d Q \begin{pmatrix}
		z_t \\
		r^2_t \\
		u^2_t 
		\end{pmatrix}
		= \Lambda Q \begin{pmatrix}
		z_t \\
		r^2_t \\
		u^2_t  
		\end{pmatrix}dt 
		&+ Q \begin{pmatrix}
		-\delta b_t\cdot\delta X_t \\
		0 \\
		-2\delta b_t\cdot P_t + 4\rc_t^2\sigma^2
		\end{pmatrix}dt \nonumber \\
		&+ Q \begin{pmatrix}
		2\frac{z_t}{u_t}  \\
		0 \\
		4 u_t
		\end{pmatrix}\sigma\rc_t dB_t.
	\end{align}
	Further note that 
	\begin{equation}
	\begin{aligned}\label{eq:quadratic}
	&\big(-\gamma +\sqrt{\gamma ^2-4 \lambda }\big) z_t + \frac{1}{2} \left( \gamma^2 - 2\lambda -\gamma\sqrt{\gamma^2-4 \lambda }\right) r^2_t + u^2_t \\
	&\quad = \bigg| \frac{\gamma -\sqrt{\gamma ^2-4 \lambda } }{2} \d X_t - P_t\bigg|^2,\\
	&\big(-\gamma -\sqrt{\gamma ^2-4 \lambda }\big) z_t +  \frac{1}{2} \left( \gamma^2 - 2\lambda +\gamma\sqrt{\gamma^2-4 \lambda }\right)  r^2_t +u^2_t \\
	&\quad = \bigg| \frac{\gamma +\sqrt{\gamma ^2-4 \lambda } }{2} \d X_t - P_t\bigg|^2.
	\end{aligned}
	\end{equation}
	Now define the function $G$:
	\begin{equation} \label{eq:defG}
		G(x , p)  : =  \bigg| \frac{\gamma -\sqrt{\gamma ^2-4 \lambda } }{2} x- p\bigg|^2 +  \bigg| \frac{\gamma +\sqrt{\gamma ^2-4 \lambda } }{2} x - p \bigg|^2.
	\end{equation}

	Denote by $G_t : = G(\d X_t , P_t)$. Together with \eqref{eq:Qdynamics}, \eqref{eq:quadratic}, we obtain
	\begin{align*}
	d G_t \leq &-\big(\gamma - \sqrt{\gamma^2-4\lambda}\big) G_t dt \\
	&+ \begin{pmatrix}  
	0 \\ 
	1 \\ 
	1 \\	
	\end{pmatrix}^\top \!\!\! Q 
	\left\{\begin{pmatrix}
	-\delta b_t\cdot\delta X_t \\
	0 \\
	-2\delta b_t\cdot P_t + 4\rc_t^2\sigma^2
	\end{pmatrix}dt \! 
	+ \begin{pmatrix}
	2\frac{z_t}{u_t}  \\
	0 \\
	4 u_t
	\end{pmatrix}\sigma\rc_t dB_t\right\}.
	\end{align*}

	\item[{\rm (b)}] If $\lambda >\frac{\gamma^2}{4}$, the eigenvalues of $A$ are 
	$$ \zeta = -\g, \quad \zeta = -\g + i \sqrt{4\l-\g^2}, \quad \zeta = -\g - i \sqrt{4\l-\g^2}.  $$ 
	We have   $QA=\Lambda Q$  with the transformation matrix 
	\begin{align*}
	Q: = \begin{pmatrix}
	-\g  & \l  & 1 \\
	4\l & -\l\g & -\g \\
	0  & \l\sqrt{4\l -\g^2}  &  - \sqrt{4\l -\g^2} 
	\end{pmatrix}
	\end{align*}
	and the standard form 
	\begin{align*}
	\Lambda := \begin{pmatrix}
	-\gamma & 0 & 0  \\
	0 & -\gamma & -\sqrt{4\lambda -\gamma^2} \\
	0 & \sqrt{4\lambda -\gamma^2} &  -\gamma 
	\end{pmatrix} .
	\end{align*}
	Multiplying $Q$ on both sides of \eqref{eq:dzru=Adt}, we again obtain \eqref{eq:Qdynamics}.  Now note that
	\begin{align*}
		-\g z_t + \l r^2_t  + u^2_t = \left| \frac{\g}{2} \d X_t - P_t\right|^2 + \left(\l - \frac{\g^2}{4}\right) |\d X_t|^2 =:G(\d X_t, P_t).
	\end{align*}
	Together with \eqref{eq:Qdynamics}, we obtain
	\begin{align*}
	d G_t = -\g G_t dt +  \begin{pmatrix}  
	1 \\ 
	0 \\ 
	0	
	\end{pmatrix}^\top \!\! Q
	\left\{\begin{pmatrix}
	-\delta b_t\cdot\delta X_t \\
	0 \\
	-2\delta b_t\cdot P_t + 4\rc_t^2\sigma^2
	\end{pmatrix}dt
	+ \begin{pmatrix}
	2\frac{z_t}{u_t}  \\
	0 \\
	4 u_t
	\end{pmatrix}\sigma\rc_t dB_t\right\}.
	\end{align*}
\end{itemize}

By defining 
\begin{align*}
\overline{\gamma} :=
\begin{cases}
\gamma - \sqrt{\gamma^2-4\lambda},   & \mbox{if }\gamma^2>4\lambda \\
\gamma,  & \mbox{if }\gamma^2<4\lambda
\end{cases} 
,\quad
\overline{Q} :=
\begin{cases}
\begin{pmatrix}0 & 1 & 1\end{pmatrix}  Q , & \mbox{if }\gamma^2>4\lambda \\
\begin{pmatrix}1 & 0 & 0\end{pmatrix}  Q , & \mbox{if }\gamma^2<4\lambda
\end{cases},
\end{align*}
we have
\begin{align}  \label{ineq:dGtleq-gamma}
d G_t \le - \overline\g G_t dt 
+ \overline Q \left\{\begin{pmatrix}
-\delta b_t\cdot\delta X_t \\
0 \\
-2\delta b_t\cdot P_t + 4\rc_t^2\sigma^2
\end{pmatrix}dt
+ \begin{pmatrix}
2\frac{z_t}{u_t}  \\
0 \\
4 u_t
\end{pmatrix}\sigma\rc_t dB_t\right\}.
\end{align}

\ms
Finally, notice that in each case the function $G$  is a quadratic form and is coercive, that is,

\begin{lem} \label{lem:uniformelliptic}
There exist $C_{\scriptscriptstyle G},\l_{\scriptscriptstyle G} > 0$ such that 
 $$  \l_{\scriptscriptstyle G} \big(r^2_t + u^2_t\big) \leq G_t \leq C_{\scriptscriptstyle G}\big(r^2_t + u^2_t\big). $$
\end{lem}
\begin{proof}
In both cases, the functions $G$ can be written in the form: 
  $$ G_t = \left| \Sigma \begin{pmatrix}\d X_t \\ P_t\end{pmatrix}\right|^2, $$
   where the matrices $\Si$ are of full rank in both cases. 
Denote by $\l_{\scriptscriptstyle G}$ the smallest eigenvalue of the matrix $\Si^\top \Si$. 
Clearly $\l_{\scriptscriptstyle G} >0$. 
Then we have $G_t \ge \l_{\scriptscriptstyle G}(r^2_t + u^2_t)$. 
Taking $C_{\scriptscriptstyle G}:=\|\Sigma\|^2$ implies the second inequality. 
\end{proof}

\begin{rem}
Careful readers have noticed that we did not discuss the case $\l =\frac{\g^2}{4}$. Indeed, in this case one may extract $\e>0$ from $\l$ and define the new $\tilde\l: =\l -\e <\frac{\g^2}{4}$. Provided that $\e$ is small enough, it will not cause trouble to the following analysis.
\end{rem}

\ms
\begin{rem}\label{rem:opt_rate}
In case $b=0$, the contraction result can directly follow from the synchronous coupling, i.e. $\rc_t \equiv 0$. Since $\l_{\scriptscriptstyle G} (r^2_t + u^2_t) \le G_t \le C_{\scriptscriptstyle G}  (r^2_t+u^2_t)$, it follows from \eqref{ineq:dGtleq-gamma} that
\begin{align*}
	\cW_2(m_t, m'_t) \le \sqrt{\frac{C_{\scriptscriptstyle G}}{\l_{\scriptscriptstyle G}}}  e^{-\frac{\overline \g}{2}t} \cW_2(m_0, m'_0).
\end{align*}
On the other hand, it follows from \cite[Theorem 6.4]{Pav14} that the spectral gap of the operator 
\begin{align*}
	-\mathfrak{L}  := -  v\cd\nabla_x  + \l x \cd \nabla_v  - \g (\D_v -v\cd\nabla_v)  
\end{align*}
is also equal to $\frac{\overline\g}{2}$. It justifies that using the quadratic forms $G$ constructed above, we may capture the optimal contraction rate on the area of interest. 
We also refer the interested readers to \cite{MPP2002} for the structure of spectrum of (possibly degenerate) Ornstein-Uhlenbeck operators. 
\end{rem}

\subsubsection{Proof of contraction}\label{subsec:proofcontraction}

\begin{lem}
Let $c\in \dbR$, $\eta,\beta\in(0,\infty)$, and suppose that $h:[0,\infty) \longrightarrow[0,\infty)$ is continuous, non-decreasing, concave, and $C^2$ except for finitely many points.
Define
\begin{equation*}
\psi_t := \psi(X_t-X_t',V_t-V_t') = (1+\beta G_t)h(\ell_t), \quad \mbox{with} \quad  \ell_t:=r_t + \eta u_t.
\end{equation*}
Then, 
\begin{align} \label{eq:ePsileqPsi0KM}
e^{ct}\psi_t &\leq \psi_0 + \int_0^t e^{cs}K_s ds + M_t, \quad t\geq 0,
\end{align}		
where $M$ is a continuous  local martingale, and 
\begin{equation}  \label{eq:driftK}
\begin{aligned}
K_t &= (1+\beta G_t)h'(\ell_t)\big\{\eta\big|\delta b_t\big| + u_t + \big(\eta\lambda - \gamma\big)r_t\big\} 
+ (1+\beta G_t)2h''(\ell_t)\eta^2\sigma^2\rc_t^2   \\
&\qquad  + 4\beta\eta\sigma^2\rc_t^2 h'(\ell_t)\big\|\overline{Q}\big\|(r_t+2u_t) + c\psi_t - \overline{\gamma}\beta G_t h(\ell_t)  \\
&\qquad  +  \beta h(\ell_t)
\left|\overline Q 
\begin{pmatrix}
-\delta b_t\cdot\delta X_t \\
0 \\
-2\delta b_t\cdot P_t + 4\rc_t^2\sigma^2
\end{pmatrix}\right|.  
\end{aligned}
\end{equation}
\end{lem}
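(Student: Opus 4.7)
The plan is to apply It\^o's formula to $\psi_t = (1+\beta G_t) h(\ell_t)$, then multiply by the integrating factor $e^{ct}$, and finally regroup the drift terms to match \eqref{eq:driftK} while isolating the continuous martingale part $M_t$. The strategy is essentially bookkeeping: every contribution to $K_t$ should correspond to a specific term that appears after an It\^o expansion, and the inequality in \eqref{eq:ePsileqPsi0KM} (rather than equality) allows us to upper-bound drift terms whose signs or scalar products can be controlled by elementary inequalities.

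I would begin by computing $d\ell_t = dr_t + \eta\,du_t$ from \eqref{eqn:MFLcoupling1}. Using $e^x_t\cdot P_t \le u_t$ and $-\delta b_t\cdot e^p_t - \lambda\,e^p_t\cdot \delta X_t \le |\delta b_t| + \lambda r_t$, the drift of $\ell_t$ is bounded above by $\eta|\delta b_t| + u_t + (\eta\lambda - \gamma)r_t$, while the martingale part of $\ell_t$ is $2\eta\sigma\rc_t\,dB_t$ and $d\langle\ell\rangle_t = 4\eta^2\sigma^2\rc_t^2\,dt$. Applying It\^o to $h(\ell_t)$ (justified by smoothing $h$ at its finitely many non-$C^2$ points and passing to the limit, since $h',h''$ are bounded on the range of interest), and using $h' \ge 0$ together with concavity $h'' \le 0$, yields
\begin{align*}
dh(\ell_t) \;\le\; h'(\ell_t)\bigl\{\eta|\delta b_t| + u_t + (\eta\lambda-\gamma)r_t\bigr\}\,dt + 2\eta^2\sigma^2\rc_t^2 h''(\ell_t)\,dt + 2\eta\sigma\rc_t h'(\ell_t)\,dB_t.
\end{align*}

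Next I apply the product rule to $(1+\beta G_t)\,h(\ell_t)$, producing three ingredients: $(1+\beta G_t)\,dh(\ell_t)$, giving the first two terms of $K_t$ after multiplication by $(1+\beta G_t)$; the drift bound $\beta h(\ell_t)\,dG_t$, which by \eqref{ineq:dGtleq-gamma} contributes $-\bar\gamma\beta G_t h(\ell_t)\,dt$ plus the $\beta h(\ell_t)\,\bar Q(\cdots)\,dt$ term (absorbed into $K_t$ as an absolute value since we only need an upper bound); and the quadratic covariation $\beta\,d\langle G, h(\ell)\rangle_t$. For the last one, the martingale parts of $G_t$ and $h(\ell_t)$ are both driven by $B_t$ with diffusion coefficients $\sigma\rc_t\,\bar Q(2z_t/u_t,0,4u_t)^\top$ and $2\eta\sigma\rc_t h'(\ell_t)$ respectively, so using Cauchy--Schwarz $|z_t/u_t|\le r_t$ (and the convention that $\rc_t=0$ when $u_t=0$, which removes any indeterminacy) together with $\sqrt{(2r_t)^2+(4u_t)^2}\le 2(r_t+2u_t)$, the covariation is dominated by $4\beta\eta\sigma^2\rc_t^2 h'(\ell_t)\|\bar Q\|(r_t+2u_t)\,dt$, matching the third line of $K_t$. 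Finally multiplying by $e^{ct}$ introduces the $c\psi_t$ term via $d(e^{ct}\psi_t) = c e^{ct}\psi_t\,dt + e^{ct}d\psi_t$, after which integration and identifying the continuous local martingale $M_t$ (which is a true martingale by standard localisation plus a moment estimate on $r_t,u_t$) yields \eqref{eq:ePsileqPsi0KM}.

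The main delicate points are twofold. First, the It\^o formula for $r_t = |\delta X_t|$ and $u_t = |P_t|$ requires care at zeros: for $r_t$ the process is actually absolutely continuous with no martingale component (the derivative along a smooth direction is used, with the convention $e^x_t = 0$ when $\delta X_t = 0$), and for $u_t$ the formula is standard away from $u_t = 0$ while the coefficient $\rc_t$ is chosen to vanish when $u_t=0$, ensuring all ``dangerous'' terms like $2z_t/u_t$ appear multiplied by $\rc_t$. Second, the piecewise $C^2$ regularity of $h$ prevents a direct application of It\^o, so an approximation by $C^2$ functions (or equivalently a Meyer--Tanaka generalised It\^o formula) is invoked; since the quadratic variation of $\ell_t$ has a density with respect to Lebesgue time and $h''$ is bounded on compacts, the exceptional points contribute nothing in the limit and the stated inequality is preserved.
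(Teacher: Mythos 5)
Your proposal is correct and follows essentially the same route as the paper: It\^o--Tanaka (equivalently, smoothing) applied to $h(\ell_t)$ using concavity to keep the inequality direction, the bound \eqref{ineq:dGtleq-gamma} for the $\beta h(\ell_t)\,dG_t$ contribution, the same Cauchy--Schwarz estimate $|z_t/u_t|\le r_t$ to dominate $\beta\,d\langle h(\ell),G\rangle_t$ by $4\beta\eta\sigma^2\rc_t^2h'(\ell_t)\|\overline Q\|(r_t+2u_t)$, and finally the integrating factor $e^{ct}$. The only cosmetic imprecision is the phrase that the non-$C^2$ points ``contribute nothing'': by concavity they contribute non-positive local-time terms, which is exactly why the inequality (rather than equality) survives, as you in fact conclude.
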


\begin{proof}
Since by assumption, $h$ is concave and piecewise $C^2$, we can now apply the It\^o-Tanaka formula to $h(\ell_t)$. 
Denote by $h'$ and $h''$ the left-sided first derivative and the almost everywhere defined second derivative. 
The generalized second derivative of $h$ is a signed measure $\mu_h$ such that $\mu_h(d\ell)\leq h''(\ell)d\ell$. 
We obtain  
\begin{align*}
dh(\ell_t) &= h'(\ell_t)(dr_t + \eta du_t) + \frac{1}{2}h''(\ell_t)d\langle \eta u\rangle_t \\
&= h'(\ell_t)\Big\{e^x_t\cdot P_t - \gamma r_t-\eta\delta b_t\cdot e_t^p - \eta\lambda e^p_t\cdot\delta X_t \Big\}dt
+ 2h''(\ell_t)\eta^2\sigma^2\rc_t^2dt \\
&\qquad + 2h'(\ell_t)\eta\sigma\rc_t dB_t \\
&\leq h'(\ell_t)\Big\{\eta|\delta b_t| + u_t + \big(\eta\lambda  - \gamma\big)r_t\Big\}dt 
+ 2h''(\ell_t)\eta^2\sigma^2\rc_t^2dt
+ 2h'(\ell_t)\eta\sigma\rc_t dB_t.  
\end{align*}
Calculate the quadratic variation
\begin{align*}
d\langle h(\ell),G\rangle_t 
&= 2h'(\ell_t)\eta\sigma\rc_t\overline{Q}
\begin{pmatrix}
2\frac{z_t}{u_t} \\
0 \\
4 u_t
\end{pmatrix}\sigma\rc_t dt
\leq 4\eta\sigma^2\rc_t^2 h'(\ell_t)\big\|\overline{Q}\big\|(r_t+2u_t)dt.
\end{align*}
Finally, again by It\^o's formula, we obtain
\begin{align*}
d\big(e^{ct}\psi_t\big) 
&= e^{ct}\big((1+\beta G_t)dh(\ell_t) + \beta h(\ell_t)dG_t + \beta d\langle h(\ell),G\rangle_t + c\psi_tdt\big) \\
&\leq e^{ct}\big(K_tdt + d\widetilde{M}_t\big),
\end{align*}
with
\begin{equation*}
d\widetilde{M}_t = (1+\beta G_t)2h'(\ell_t)\eta\sigma\rc_t dB_t 
+ \beta h(\ell_t) \overline{Q}
\begin{pmatrix}
2\frac{z_t}{u_t}  \\
0 \\
4 u_t
\end{pmatrix}\sigma\rc_t dB_t 
\end{equation*}
and the process $K$ defined in \eqref{eq:driftK}. 
The assertion follows by taking $M_t=\int_0^te^{cs}\widetilde{M}_s$.
\end{proof}
\ms

In order to make $\psi_t$ a contraction under expectation, it remains to choose the coefficients  $\eta, \b, h$  so that $\dbE[K_t]\leq 0$. 

\paragraph{Choice of coefficients}
Recall $\l_{\scriptscriptstyle G}$ in Lemma \ref{lem:uniformelliptic}. 
We fix a constant  
  \begin{align} \label{eq:eps0}
  	 \varepsilon_0:=\frac{\overline\gamma \lambda_{\scriptscriptstyle G}}{14\|\overline Q\|} \wedge\frac{\gamma}{2}<\gamma.  
  \end{align}
Recall that there exists $M>0$ such that for all $m,m'\in \Pc_2(\R^n)$
\begin{equation} \label{eq:smallLipschitz}
\big|b(m,x) - b(m,x') \big| \leq \e_0|x-x'|\quad\mbox{whenever $|x-x'|\ge M$.}
\end{equation}
Using $\e_0, M$ above, we define     
\begin{align}  \label{def:eta-theta}
 \eta := \frac{ \e_0}{L^x + \lambda + 4\big\|\overline Q\big\|\sigma^2} \wedge \frac{M\sqrt{\varepsilon_0}}{\sigma}
\quad\mbox{and} \quad \theta:=\frac{1}{\eta}+8\big\|\overline Q\big\|\sigma^2, 
\end{align} 
where $L^x$ is the Lipschitz constant of the function $b$ in $x$. 
Now we are ready to introduce the function
 \begin{align}\label{eq:def-h}
 	h(\ell) = \int_0^{2M\wedge\ell}\varphi(s)g(s)ds,
 \end{align}
 with
\begin{align*}
 \varphi(s) = \exp\left(-\frac{\theta}{\eta^2\sigma^2}\frac{s^2}{4}\right), \quad 
g(s)= 1-\frac{1}{2}\frac{\int_0^s\frac{\Phi(r)}{\varphi(r)}dr}{\int_0^{2M}\frac{\Phi(r)}{\varphi(r)}dr}, \quad 
\Phi(r) = \int_0^r\varphi(x)dx.
\end{align*} 
\begin{rem} \label{rem:phigh}
	The function $h$ and its similar variations are repeatedly used in Eberle \cite{eberle11}, Eberle, Guillin and Zimmer \cite{eberle2019quantitative, EGZ19}, Luo and Wang \cite{LW16} to measure the contraction under the reflection coupling.  
	In particular, the functions $\varphi$, $g$ and $h$ have the following properties:
	\begin{itemize} 
		\item $\varphi$ is decreasing, 
		$$ \varphi_{\min}:=\min_{0\leq s\leq 2M}\varphi(s)= \exp\left(-\frac{\theta}{\eta^2\sigma^2}M^2\right). $$ 
		\item $g$ is decreasing, $g(0)=1$ and $g(s)\geq g(2M)=\frac{1}{2}$ for $r\in[0,2M]$. 
		\item $h$ is non-decreasing, concave, $h(0)=0$, $h'(0)=1$, 
		$$ h'(2M)=\varphi(2M)g(2M)=\frac{\varphi_{\min}}{2}>0 $$
		and $h$ is constant on $[2M,\infty)$
		\begin{equation*}
		h(\ell)\leq \ell, \quad \frac{\Phi(\ell)}{2} \leq h(\ell) \leq \Phi(\ell),\quad \ell\leq 2M, 
		\end{equation*}
		and 
		\begin{equation} \label{eq:DiffIneqForH}
		\theta \ell h'(\ell) + 2 \eta^2 \sigma^2 h''(\ell)\le - \overline\kappa_{\scriptscriptstyle M} h(\ell), ~~ \ell \le 2M, ~~~ \mbox{with} ~~~ 
		\overline\kappa_{\scriptscriptstyle M}:=\frac{\eta^2\sigma^2}{\int_0^{2M}\frac{\Phi(r)}{\varphi(r)}dr}.
		\end{equation}
	\end{itemize}
\end{rem}

\noindent For the later use we further define a constant  $\kappa_{\scriptscriptstyle M}>0$ such that 
\begin{equation}  \label{ineq:rhoM}
\kappa_{\scriptscriptstyle M} :=  \overline\kappa_{\scriptscriptstyle M} \wedge \frac{\f_{\min}}{2}\Big(\g -\eta\big(L^x + \lambda + 4\big\|\overline Q\big\|\sigma^2\big) \Big).
\end{equation}
Note that by the definition of $\eta$ in \eqref{def:eta-theta} we have $\eta\big(L^x + \lambda + 4\big\|\overline Q\big\|\sigma^2\big)<\gamma$. 
Next  introduce the constants 
\begin{align*}
C_1 := 4\big\|\overline Q\big\|L^xM^2\Big(1+\tfrac{2}{\eta}\Big) + 4\big\|\overline Q\big\|\sigma^2, 
\end{align*}
and choose the coefficient $\b\in(0,1]$ such that
\begin{align}\label{eq:choosebeta}
 \b< \frac{\kappa_{\scriptscriptstyle M}}{C_1} \wedge 1, 
\end{align}
e.g., define 
 \begin{align*}
 	\beta:= \frac{\kappa_{\scriptscriptstyle M}}{2C_1}\wedge 1. 
 \end{align*}
Finally we may find a constant $C_0$ such that $r +u \le C_0 \psi$ and thus 
\[\cW_1 \le C_0 \cW_\psi.\]
For the later use, define
 \begin{align*}
 	 C_2 ~:= 2\big\|\overline Q\big\|M\Big(1 + \tfrac{2}{\eta}\Big)C_0,  \quad C_{\scriptscriptstyle M} ~:= 4M^2 C_{\scriptscriptstyle G}\Big(1+\tfrac{1}{\eta^2}\Big),
 \end{align*}
 as well as 
 \begin{align} \label{eq:ciota}
 & c:=\min\bigg\{ \kappa_{\scriptscriptstyle M} -C_1\b - \Big((1+ \beta C_{\scriptscriptstyle M})\eta  C_0 + \beta h(2M)C_2\Big)\iota, \nonumber \\
 &\hspace{46mm} \Big(\overline{\gamma} -\frac{7\big\|\overline Q\big\|}{\lambda_{\scriptscriptstyle G}}\varepsilon_0\Big)\frac{2\beta\lambda_{\scriptscriptstyle G}M^2}{1 + 2\beta\lambda_{\scriptscriptstyle G}M^2}\bigg\},
 \end{align}
 with a constant $\iota>0$ such that 
  \begin{align} \label{eq:smalliota}
    \kappa_{\scriptscriptstyle M} -C_1\b - \Big((1+ \beta C_{\scriptscriptstyle M})\eta C_0 + \beta h(2M)C_2\Big)\iota >0.
  \end{align}
  

\begin{rem}\label{rem:convergence rate}
The constant $c$ defined in \eqref{eq:ciota} represents the contraction rate in Theorem \ref{thm:Contraction}. To enhance the understanding of this quantity, here we provide a lower bound of  $c$ for some specific case.
First we may define the new variables
\begin{equation*}
X' = \frac{\gamma^{3/2}}{ \sigma} X, ~~V' = \frac{\gamma^{1/2}}{\sigma} V, ~~t' = \gamma^{-1} t.
\end{equation*}
Then \(X'_{t'}, V'_{t'}\) satisfy
\begin{equation}
\label{eq:mkfl-normalized}
\begin{aligned}
dX'_{t'} &= V'_{t'} dt, \\
dV'_{t'} &= \big(  - D_m F'\big({\mathcal L}(X'_{t'}), X'_{t'}\big)- V'_{t'} \big)dt +  dW'_{t},
\end{aligned}
\end{equation}
where $W'_{t} = \gamma^{1/2} W_{t'}$ is a standard Brownian motion, and 
\(F' := \left(\frac{1}{\gamma\sigma^2}\right)^{1/2} F\). Therefore, without loss of generality we may assume $\sigma = \gamma = 1$. 
In this case, by a direct computation we obtain the following lower bound of $c$ whenever  $\lambda < \frac{1}{4}\gamma = \frac{1}{4}$:
\[c \ge  \frac{(1-\sqrt{1-4\lambda})^2 (3-2\lambda - \sqrt{4\lambda^2 + 4\lambda +5})^2}{14336 L^x(L^x+\lambda + 8)} \kappa_{\scriptscriptstyle M}.\]
This lower bound indicates that the rate we obtain through the coupling method is indeed small, despite the fact that we prove the contraction result in Theorem \ref{thm:Contraction}. Moreover, if we reduce the value of $\sigma$, the corresponding Lipschitz constant $L_x$ of $F'$ becomes bigger and the lower bound of $c$ above becomes smaller.  
\end{rem}

\begin{lem}
	With the choice of the coefficients $\eta, \b, h, c$ above, there exists $C\geq 0$ such that $\dbE[K_t]\leq C\xi$. 
\end{lem} 
\begin{proof}
	We divide $(r, u)\in \dbR_+ \times \dbR_+$ into two regions: 
	
	\vspace{2mm}
	
	\noindent {\rm (i).} \quad $\ell_t=r_t+\eta u_t\leq 2M$: 
	It follows by Lemma \ref{lem:uniformelliptic} and due to $r_t+\eta u_t\leq 2M$ that 
	$$ G_t \leq C_{\scriptscriptstyle G}\big(r_t^2+u_t^2\big) \leq 4M^2 C_{\scriptscriptstyle G}\Big(1+\tfrac{1}{\eta^2}\Big) = C_{\scriptscriptstyle M}. $$
	It is due to the Lipschitz assumption \eqref{eq:AssumpLipMeasure} and the fact $\cW_1\le C_0\cW_\psi$ that
	\begin{align*} 
	\left|\overline Q \begin{pmatrix}  
	-\delta b_t \cdot\delta X_t \\	
	0 \\
	-2\delta b_t \cdot P_t + 4\sigma^2\rc_t^2
	\end{pmatrix}\right| 
	&\leq \big\|\overline Q\big\|\big(|\delta b_t |r_t + 2|\delta b_t|u_t + 4\sigma^2\rc_t^2\big) \\
	&\leq \big\|\overline Q\big\|\Big(\big(C_0\iota\cW_\psi(m_t,m'_t)+L^xr_t\big)(r_t + 2u_t) + 4\sigma^2\Big) \\
	&\leq 2\big\|\overline Q\big\|M\Big(1 + \tfrac{2}{\eta}\Big)C_0\iota\cW_\psi(m_t,m'_t) \\
	&\qquad + 4\big\|\overline Q\big\|L^xM^2\Big(1+\tfrac{2}{\eta}\Big) + 4\big\|\overline Q\big\|\sigma^2  \\
	& = C_2\iota\cW_\psi(m_t,m'_t) + C_1.
	\end{align*}
	
	\noindent Together with \eqref{eq:driftK} we obtain
	\begin{align*}
	K_t &\leq (1+\beta G_t)h'(\ell_t)\left\{\eta C_0\iota\cW_\psi(m_t,m'_t) + \big(\eta(L^x+\lambda)-\gamma\big)r_t + u_t\right\} \\
	&\qquad + (1+\beta G_t)2h''(\ell_t)\eta^2\sigma^2\rc_t^2 + 4\beta\big\|\overline Q\big\|\eta\sigma^2\rc_t^2 h'(\ell_t)(r_t+2u_t) \\
	&\qquad + c\psi_t + \beta h(\ell_t)\big( C_2\iota\cW_\psi(m_t,m'_t) + C_1\big) \\
	&\leq \big((1+\beta C_{\scriptscriptstyle M})\eta C_0 + \beta h(2M)C_2\big)\iota\cW_\psi(m_t,m'_t) + C_1\beta h(\ell_t) + c\psi_t \\
	&\qquad + (1+\beta G_t)h'(\ell_t)\left\{\eta\left(L^x+\lambda + \frac{4\beta \big\|\overline Q\big\|\sigma^2\rc_t^2}{1+\beta G_t}\right)-\gamma \right\}r_t + I_t \\
	&\mbox{with}\quad I_t : =  (1+\beta G_t)h'(\ell_t)\left(1 + \frac{8\beta \big\|\overline Q\big\|\eta\sigma^2\rc_t^2}{1+\beta G_t}\right) u_t  + (1+\beta G_t)2h''(\ell_t)\eta^2\sigma^2\rc_t^2 
	\end{align*}
	Recall $\theta$ defined in \eqref{def:eta-theta}. Since $\beta < 1$, we have 
	\begin{align*} 
	\frac1\eta + \frac{8\beta \big\|\overline Q\big\|\sigma^2\rc_t^2}{1+\beta G_t} \le \frac{1}{\eta}+8\big\|\overline Q\big\|\sigma^2 =\th.
	\end{align*}
	Further recall that $h$ satisfies the inequality \eqref{eq:DiffIneqForH} and the constant $\kappa_{\scriptscriptstyle M}$ defined in \eqref{ineq:rhoM}. 
	Since $h''(\ell)\leq 0$, $h'(\ell)\leq 1$, $h(\ell)\leq\ell$ and $\rc_t=1$ whenever $u_t\geq\xi$, we obtain 
	\begin{align*}
	I_t	&\leq (1+\beta G_t)\theta \eta u_t h'(\ell_t) + (1+\beta G_t)2h''(\ell_t)\eta^2\sigma^2\rc_t^2 \\
	&\leq (1+\beta G_t)\Big(\theta \ell_th'(\ell_t) + 2\eta^2\sigma^2h''(\ell_t)\Big)1_{\{u_t\geq\xi\}} + (1+\beta C_{\scriptscriptstyle M})\theta\eta\xi 1_{\{u_t\leq\xi\}} \\
	&\leq -(1+\beta G_t)\overline\kappa_{\scriptscriptstyle M} h(\ell_t)1_{\{u_t\geq\xi\}} + (1+\beta C_{\scriptscriptstyle M})\theta\eta\xi \\
	&\leq -(1+\beta G_t)\kappa_{\scriptscriptstyle M} h(\ell_t) + (1+\beta G_t)\kappa_{\scriptscriptstyle M} h(\ell_t)1_{\{u_t\leq\xi\}} + (1+\beta C_{\scriptscriptstyle M})\theta\eta\xi \\
	&\leq -(1+\beta G_t)\kappa_{\scriptscriptstyle M} h(\ell_t) + (1+\beta G_t)\kappa_{\scriptscriptstyle M} r_t + (1+\beta C_{\scriptscriptstyle M})(\kappa_{\scriptscriptstyle M} + \theta)\eta\xi. 
	\end{align*} 
	Hence, 
	\begin{align*}
	K_t &\leq \big((1+\beta C_{\scriptscriptstyle M})\eta C_0 + \beta h(2M)C_2\big)\iota\cW_\psi(m_t,m'_t) + C_1\beta h(\ell_t) + c\psi_t \\
	&\qquad -(1+\beta G_t)\kappa_{\scriptscriptstyle M} h(\ell_t) + (1+\beta G_t)\kappa_{\scriptscriptstyle M} r_t + (1+\beta C_{\scriptscriptstyle M})(\kappa_{\scriptscriptstyle M} + \theta)\eta\xi \\
	&\qquad + (1+\beta G_t)h'(\ell_t)\left\{\eta\left(L^x+\lambda + \frac{4\beta \big\|\overline Q\big\|\sigma^2\rc_t^2}{1+\beta G_t}\right)-\gamma \right\}r_t \\
	&\leq \big((1+\beta C_{\scriptscriptstyle M})\eta C_0 + \beta h(2M)C_2\big)\iota\cW_\psi(m_t,m'_t) + C_1\beta h(\ell_t) + c\psi_t - \kappa_{\scriptscriptstyle M}\psi_t \\
	&\qquad + (1+\beta G_t)h'(\ell_t)\left\{\frac{\kappa_{\scriptscriptstyle M}}{h'(\ell_t)}+\eta\left(L^x+\lambda + 4\big\|\overline Q\big\|\sigma^2\right)-\gamma \right\}r_t   \\
	&\qquad + (1+\beta C_{\scriptscriptstyle M})(\kappa_{\scriptscriptstyle M} + \theta)\eta\xi. 
	\end{align*}
	Due to the choice of $\eta$ in \eqref{def:eta-theta} and $\kappa_{\scriptscriptstyle M}$ in  \eqref{ineq:rhoM}, the factor of $r_t$ above is non-positive, i.e.
	\begin{align*}
	\frac{\kappa_{\scriptscriptstyle M}}{h'(\ell_t)}+\eta\left(L^x+\lambda + 4\big\|\overline Q\big\|\sigma^2\right)-\gamma 
	\leq \frac{2\kappa_{\scriptscriptstyle M}}{\varphi_{\min}}+\eta\left(L^x+\lambda + 4\big\|\overline Q\big\|\sigma^2\right)-\gamma \leq 0.
	\end{align*}
	Therefore, we obtain
	\begin{align*}
	K_t \leq \big((1+\beta C_{\scriptscriptstyle M})\eta C_0 
	 &+ \beta h(2M)C_2\big)\iota\cW_\psi(m_t,m'_t) + \big(C_1\beta + c - \kappa_{\scriptscriptstyle M}\big)\psi_t   \\ 
	 &+ (1+\beta C_{\scriptscriptstyle M})(\kappa_{\scriptscriptstyle M} + \theta)\eta\xi.
	\end{align*} 
	Since $\cW_\psi(m_t,m_t')\leq\dbE[\psi_t]$ and taking expectation on both sides we obtain that
	\begin{align*}  \label{eq:region1}
	\dbE[K_t] & \leq \Big(\big((1+\beta C_{\scriptscriptstyle M})\eta C_0 + \beta h(2M)C_2\big)\iota + \left(C_1\beta + c - \kappa_{\scriptscriptstyle M}\right)\Big) \dbE[\psi_t] \\
	&\qquad + (1+\beta C_{\scriptscriptstyle M})(\kappa_{\scriptscriptstyle M} + \theta)\eta\xi\\
	& \le (1+\beta C_{\scriptscriptstyle M})(\kappa_{\scriptscriptstyle M} + \theta)\eta\xi =: C\xi,
	\end{align*}		
	where  the last inequality is due to the definition of $c$ in \eqref{eq:ciota}.
	
	\vspace{3mm}

	
	\noindent {\rm (ii).}\quad $\ell_t = r_t+\eta u_t\geq 2M$: In this region, $h(\ell_t)$ is constant, $h'(\ell_t)=h''(\ell_t)= 0$.
	Therefore, 
	\begin{equation} \label{eq:simplifyK}
	\begin{aligned}
	K_t = c\psi_t - \overline{\gamma}\beta G_th(2M) 
	+ \beta h(2M)\left| \overline Q \begin{pmatrix}  
	-\delta b_t\cdot\delta X_t \\	
	0 \\
	-2\delta b_t\cdot P_t + C_0 \sigma^2\rc_t^2 
	\end{pmatrix}\right|.  
	\end{aligned}
	\end{equation}
	Further we can divide this region into two parts:
	$$ \{(r,u):~  \eta u +  r \ge 2 M \} ~\subseteq~ \{r \ge M\} \cup \{u\ge \eta^{-1}(r \vee M)\}.$$
	Recall that by the choice of $\eta$ in \eqref{def:eta-theta} we have $\sigma^2\leq \varepsilon_0M^2\big(1\vee\tfrac{1}{\eta}\big)^2$ and $\varepsilon_0 \geq L^x\eta$. 
	Together with \eqref{eq:smallLipschitz} we obtain
	\begin{align*}
	|\d b_t| \le \e_0 r_t, \quad \rc_t^2 \si^2 \le \e_0 r_t^2, \quad \mbox{on}~~\{r \ge M\} 
	\end{align*}
	as well as
	\begin{align*}
	|\d b_t| \le L^x\eta u_t\le \e_0 u_t, \quad \rc_t^2 \si^2 \le \e_0 u_t^2, \quad \mbox{on}~~\{u \ge \eta^{-1} (r \vee M)\}.
	\end{align*}
	Combining the two estimates above, we get
	$$ |\d b_t| \leq \e_0(r_t\vee u_t). $$
	and therefore
	\begin{align}\label{eq:estimateQbar}
	\left|\overline Q
	\begin{pmatrix}
	-\delta b_t\cdot\delta X \\
	0 \\
	-2\delta b_t\cdot P_t + 4\rc_t^2\sigma^2
	\end{pmatrix} \right|
	&\leq 7 \big\|\overline Q\big\|\varepsilon_0 (r^2_t + u^2_t) 
	\leq \frac{7\big\|\overline Q\big\|\varepsilon_0}{\lambda_{\scriptscriptstyle G}}G_t,
	\end{align}
	where for the last inequality we use the coercivity in Lemma \ref{lem:uniformelliptic}.
	Also due to $r_t + \eta u_t\geq 2M$ and $\eta\leq 1$ we have 
	$$ \frac{\beta G_t}{1+\beta G_t} \geq \frac{2\beta\lambda_{\scriptscriptstyle G}M^2}{1 + 2\beta\lambda_{\scriptscriptstyle G}M^2}. $$
	Together with \eqref{eq:simplifyK} and \eqref{eq:estimateQbar} we obtain
	\begin{align*}
	K_t &\leq c\psi_t - \overline{\gamma}\beta G_t h(2M) + \beta h(2M)\frac{7\big\|\overline Q\big\|\varepsilon_0}{\lambda_{\scriptscriptstyle G}}G_t  \\
	&= c\psi_t - \overline{\gamma}\frac{\beta G_t}{1+\beta G_t}\psi_t + \frac{7\big\|\overline Q\big\|\varepsilon_0}{\lambda_{\scriptscriptstyle G}}\frac{\beta G_t}{1+\beta G_t}\psi_t   \\
	&= \psi_t\left(c - \bigg(\overline{\gamma} - \frac{7\big\|\overline Q\big\|}{\lambda_{\scriptscriptstyle G}}\varepsilon_0\bigg)\frac{\beta G_t}{1+\beta G_t} \right) \\
	&\leq \psi_t\left(c -\bigg(\overline{\gamma} - \frac{7\big\|\overline Q\big\|}{\lambda_{\scriptscriptstyle G}}\varepsilon_0\bigg)\frac{2\beta\lambda_{\scriptscriptstyle G}M^2}{1 + 2\beta\lambda_{\scriptscriptstyle G}M^2}\right)\le0,
	\end{align*}
	where the second last inequality is due to the choice of $\e_0$ in \eqref{eq:eps0} and the last one is due to  $c$ defined in \eqref{eq:ciota}.
\end{proof}

\vspace{5mm}

\begin{proof}[Proof of Theorem \ref{thm:Contraction}]
Let $\Gamma$ be a coupling of two probability measures $m_0$ and $m_0'$ on $\R^{2n}$ such that $$ \cW_\psi\big(m_0,m_0'\big) = \int\psi d\Gamma. $$  
We consider the coupling process $\big((X,V),(X',V')\big)$ introduced above with initial law 
  $$ \big((X_0,V_0),(X_0',V_0')\big)\sim\Gamma. $$ 
By taking expectation on both sides of \eqref{eq:ePsileqPsi0KM}, evaluated at localizing stopping times $\tau_n\to t$ and applying Fatou's lemma as $n\to\infty$, we obtain
\begin{align*}
\dbE[e^{ct}\psi_t] 
&\leq \dbE[\psi_0] + \int_0^t e^{cs}\dbE[K_s] ds 
\le  \dbE[\psi_0] + Cc^{-1}(e^{ct}-1)\xi.
\end{align*}
for any $\xi>0$ and $t\geq 0$. 
Note that $\dbE[\psi_0]=\int\psi d\Gamma = \cW_\psi\big(m_0,m_0'\big) $. Therefore
\begin{align*}
\cW_\psi(m_t,m_t') 
&\leq \dbE[\psi_t] \le  e^{-ct}\cW_\psi\big(m_0,m_0'\big)  +  Cc^{-1}\big(1-e^{-ct}\big)\xi\longrightarrow e^{-ct}\cW_\psi\big(m_0,m_0'\big),
\end{align*}
as $\xi\longrightarrow 0$. 
Finally note that by the choice of $\b$ in \eqref{eq:choosebeta}, we have $c>0$ according to \eqref{eq:ciota} provided that $\iota$ is small enough.
\end{proof}

\bibliographystyle{abbrv}
\bibliography{references}

\end{document}